\numberwithin{equation}{subsection}
\theoremstyle{definition}
\newtheorem{introthm}{Theorem}
\newtheorem{introcor}[introthm]{Corollary}
\newtheorem{theorem}{Theorem}[section]
\newtheorem{lemma}[theorem]{Lemma}
\newtheorem{corollary}[theorem]{Corollary}
\newtheorem{proposition}[theorem]{Proposition}
\newtheorem{observation}[theorem]{Observation}
\theoremstyle{definition}
\newtheorem{definition}[theorem]{Definition}
\newtheorem{example}[theorem]{Example}
\newtheorem{question}[theorem]{Question}
\newtheorem{remark}[theorem]{Remark}
\newtheorem*{convention}{Convention}
\theoremstyle{remark}
\newcommand{\Sim}{\text{Sim}}
\newcommand{\supp}{\text{Supp}}
\title[On Similarity Structure Groups]{On Similarity Structure Groups and their W$^*$ and C$^*$-algebras}
\author[E. Bashwinger]{Eli Bashwinger}
\address{Department of Mathematics, University of Utah, 155 S 1400 E, Salt Lake City, Utah, 84112, USA}
\email{u6060268@utah.edu}
\author[P. DeBonis]{Patrick DeBonis}
\address{Department of Mathematics, Purdue University, Mathematical Sciences Bldg, 150 N University St, West Lafayette, Indiana, 47907, USA.}
\email{pdebonis@purdue.edu}
\begin{document}

\begin{abstract}
    Countable Similarity Structure (CSS) groups are a class of generalized Thompson groups essentially introduced by Hughes. In this paper, we study CSS$^*$ groups, a subclass that includes the Higman-Thompson groups $V_{d,r}$, the countable R\"over-Nekrashevych groups $V_d(G)$, and the topological full groups of subshifts of finite type of Matui. We prove that many CSS$^*$ groups give rise to prime group von Neumann algebras, greatly expanding the class of groups satisfying the result of the second named author, de Santiago, and Khan. In the process, we also prove that many CSS$^*$ groups are non-inner amenable and properly proximal. We then prove CSS$^*$ groups are either $C^*$-simple with a simple commutator subgroup, or lack both properties. This extends $C^*$-simplicity results of Le Boudec and Matte Bon and recovers the simple commutator subgroup results of Bleak, Elliott, and Hyde. Lastly, we observe that CSS$^*$ groups are not acylindrically hyperbolic, motivating the need to prove many of these results by other methods.  
    
\end{abstract}

\maketitle

\tableofcontents

\section{Introduction}

The Thompson groups $F < T < V$ and their generalizations have attracted wide-spread attention, particularly in the last few decades, serving as examples and counterexamples for many results in group theory, topology, and logic. Although there have been various generalizations of these groups, three notable sources for systemically producing Thompson-like groups come from the cloning system construction, first introduced by Witzel and Zaremsky in \cite{witzel2018thompson}, Jones Technology elaborated on in \cite{brothier2023}, and similarity structures on compact ultrametric spaces, first introduced by Hughes in \cite{hughes2009}. A \emph{similarity structure} on a compact ultrametric space $X$ is a function that associates a set of local similarities between any two closed balls of $X$. When the sets are finite, this is called a \emph{finite similarity structure} or FSS. The largest group determined by a similarity structure is called an \emph{FSS group} and is a subgroup of $\text{LS}(X)$, the group of local similarities of the compact ultrametric space. In this paper, we isolate a special class of FSS groups and their natural generalization, \emph{countable similarity structure} (CSS) groups, for which we are able to study their analytic properties that shed light on the structure of the resulting group von Neumann algebra and reduced group $C^*$-algebra. 

Studying the operator algebras of the extended class of generalized Thompson groups began with Jolissaint in \cite{jolissaint1998central}, where he proved that the group von Neumann algebra $L(F)$ for the Thompson group $F$ is McDuff, meaning $L(F) \cong L(F) \bar \otimes R$, where $R$ is the hyperfinite II$_1$ factor. Later, in  \cite{bashwinger2021neumann}, the first named author and Zaremsky initiated the study of the von Neumann algebras of Thompson-like groups from cloning systems, where they generalized Jolissaint's result and proved that many ``$F$-like" groups give rise to group von Neumann algebras that are McDuff factors and therefore are inner amenable. In \cite{bashwinger2023neumann}, the first named author went on to prove that the preponderance of these cloning system groups give rise to inclusions of von Neumann algebras satisfying the weak asymptotic homomorphism property with respect to the subalgebra $L(F_d)$, where $F_d$ is the smallest of the Higman-Thompson groups (a very natural generalization of $F$). In \cite{de2023mcduff}, the second named author, de Santiago, and Khan proved that many of the $F$-like groups coming from cloning systems are stable in the sense of Jones and Schmidt, meaning they admit a free ergodic p.m.p action on a probability space such that the resulting crossed product von Neumann algebra is a McDuff factor. They also showed that for the Higman-Thompson groups $T_d$ and $V_d$, the group von Neumann algebras $L(T_d)$ and $L(V_d)$ are prime. A II$_1$ factor is \emph{prime} if $M \not \cong P \bar \otimes Q $ for any II$_1$ factors $P$ and $Q$. One of the main motivations of this present paper was to generalize this result to a suitable class of CSS groups.

Primeness for von Neumann algebras is a property that is intensive to demonstrate. Popa exhibited the first prime von Neumann algebra in \cite{popa1983orthogonal}, and in \cite{Liming1996} Ge used the techniques of Voiculescu's free probability theory to prove that the free group factors $L(\mathbb F_n)$ are prime. Ozawa then generalized this to all non-elementary hyperbolic groups and bi-exact groups in \cite{ozawa2004solid} and \cite{BrownOzawa2008}. In fact, Ozawa proved a stronger property: that they yield solid von Neumann Algebras. A von Neumann algebra $M$ is \emph{solid} if for any diffuse subalgebra $A \subset M$, $A'\cap M$ is amenable. Related properties, such as strong solidity and its generalizations, were developed later (see for example \cite{ozawapopa2010} and \cite{hayes2025consequences}). Popa's deformation/rigidity theory then became a powerful set of tools for proving structural results of II$_1$ factors, including that other classes of groups are prime, solid, or strongly solid. After Popa reproved Ozawa's solidity result for free group factors in \cite{popa2007OzawasPropforFree}, Peterson proved groups admitting a weakly-$\ell^2$ unbounded 1-cocycle produce solid group von Neumann algebras, in \cite{peterson2009}. The main result of \cite{de2023mcduff} (referenced as Theorem \ref{thm:primevNa} in Section \ref{sec:prime}) can then be understood as a generalization of Peterson's work, where it was shown that under particular conditions, groups admitting a 1-cocycle into a quasi-regular representation will produce a prime von Neumann algebra that is generally \emph{not} solid. In particular, the second named author, de Santiago, and Khan proved Theorem \ref{thm:primevNa} and that the Higman-Thompson groups $T_d$ and $V_d$ satisfy the necessary assumptions. The original motivation of this current paper was to understand which generalized Thompson groups satisfy Theorem \ref{thm:primevNa}, and more precisely, which FSS groups.

In \cite{hughes2009}, Hughes introduced FSS groups as a generalized class of Thompson groups that specifically possess the Haagerup property. In particular, Hughes generalized the proper 1-cocycle of Farley from \cite{farley2003proper} and actually showed that all FSS groups act properly by isometries on CAT$(0)$ cubical complexes. In \cite{FarleyHughes15}, Farley and Hughes studied the finiteness properties of these groups and identified a subclass that are all type $F_\infty$. In an earlier version of \cite{FarleyHughes15}, they also used the language of FSS groups to give some isomorphism class of $V_d(H)$ for $H$ finite, and identified simple subgroups related to Nekrashevych work in \cite{nekrashevych2004cuntz}. In \cite{hughes2017braided}, they isolated a connection between braided diagram groups and FSS groups, and showed the Houghton groups and a particular group of quasi-automorphisms of the infinite binary tree fall into both classes. Later, Farley and Hughes generalized FSS groups further in an effort to provide a unifying framework for the finiteness properties for generalized Thompson groups in \cite{FarleyHugheslocalgroups} and \cite{Farley25Expansion}. Their efforts included the Brin-Thompson groups $nV$, among many others, but do not apply to some in the cloning system family, like the braided Higman-Thompson group $bV_d$. Additionally, Farley studied the context-free co-word problem for similarity structure groups in \cite{Farley18ContextFree} and Sauer and Thumann \cite{SauerThumann2014} showed certain similarity structure groups are $\ell^2$-invisible. In summary, apart from the Haagerup property result of Hughes, many analytic aspects of these groups necessary for operator algebraic results remained unexamined, and in this paper we initiate a more robust study of such properties of certain similarity structure groups.

\subsection*{Statement of Results}

The class of CSS groups we study are referred to as \emph{CSS$^*$ groups} and are defined more precisely in Definition \ref{def:CSS*}. Morally speaking, CSS$^*$ groups are those that possess a ``local homogeneity'' property in the corresponding compact ultrametric space $X$. Given any closed ball $B$ in $X$ and an infinite ball $D$ in $X$, this local homogeneity translates to always being able to find a closed subball  $D_0 \subseteq D$ such that there exists a surjective similarity $h:B \to D_0$. This turns out to be enough to establish a rich structure within the CSS$^*$ groups. We can now state our main results, many of which are used to prove the following motivating theorem, yet are important in their own right.

\begin{introthm}[Theorem \ref{thm:primeCSS}]\label{mainthm:PrimeCSS}
    Let $\Gamma \leq \text{LS}(X)$ be a FSS$^*$ group, or a countable R\"{o}ver-Nekrashevych group. Then $L(\Gamma)$ is a prime II$_1$ factor, but is not solid.
\end{introthm}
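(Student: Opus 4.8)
The plan is to split the statement into its two halves and attack them by quite different means: primeness by verifying the hypotheses of Theorem \ref{thm:primevNa}, and non-solidity by an elementary argument exploiting the product structure inside a CSS$^*$ group. Recall that Theorem \ref{thm:primevNa} takes as input a group that is ICC, non-inner amenable, and carries a suitable unbounded $1$-cocycle into a quasi-regular representation $\ell^2(\Gamma/\Lambda)$, and outputs that $L(\Gamma)$ is a prime (but generally non-solid) II$_1$ factor. So the bulk of the work is to manufacture these three ingredients for an arbitrary CSS$^*$ group $\Gamma \leq \text{LS}(X)$.

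First I would record that $L(\Gamma)$ is a II$_1$ factor by showing $\Gamma$ is ICC. The local homogeneity property is exactly the tool for this: given a nontrivial $g \in \Gamma$ with support inside some ball $B$, one uses surjective similarities $h \colon B \to D_0$ onto subballs $D_0$ of an infinite ball $D$ to conjugate $g$ so that its support is pushed into infinitely many distinct locations, forcing the conjugacy class of $g$ to be infinite. Non-inner amenability is one of the paper's earlier results (announced in the abstract), so I would invoke it directly rather than reprove it here.

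Second, and this is the crux, I would construct the $1$-cocycle. The natural source is the action of $\Gamma$ on $X$: fixing a point (or germ) $x \in X$ and setting $\Lambda = \mathrm{Stab}_\Gamma(x)$ produces the quasi-regular representation $\ell^2(\Gamma/\Lambda)$. The cocycle should measure the combinatorial displacement of $g$ near $x$, morally a repackaging of the proper cocycle of Farley and Hughes coming from the action on the CAT$(0)$ cubical complex, reorganized so that it lands in $\ell^2(\Gamma/\Lambda)$ rather than $\ell^2(\Gamma)$ (it is the replacement of the regular by the quasi-regular representation that accounts for primeness without solidity). The key verification is that this cocycle is \emph{unbounded}: local homogeneity guarantees elements performing arbitrarily deep expansions near $x$, so the displacement cannot be uniformly bounded and the cocycle is not a perturbation of a bounded one. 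I would then check the remaining representation-theoretic hypothesis of Theorem \ref{thm:primevNa}, namely the weak mixing of $\ell^2(\Gamma/\Lambda)$ (equivalently, suitable non-finiteness of the $\Gamma$-orbit structure on $\Gamma/\Lambda$), which again follows from local homogeneity. Granting these, Theorem \ref{thm:primevNa} yields primeness.

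For non-solidity I would exhibit a single diffuse subalgebra whose relative commutant is non-amenable. Choosing two disjoint closed balls $B_1, B_2 \subseteq X$, local homogeneity lets me place a diffuse (say infinite cyclic or infinite abelian) subgroup $H$ supported in $B_1$ and a non-amenable subgroup $K$ (for instance a free group, or a copy of $V$ obtained from the rich structure on $B_2$) supported in $B_2$; elements with disjoint supports commute and intersect trivially, so they generate $H \times K \leq \Gamma$. Then $L(H)$ is diffuse while $L(K) \subseteq L(H)' \cap L(\Gamma)$ is non-amenable, so the relative commutant of a diffuse subalgebra is non-amenable, which is precisely the failure of solidity. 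The main obstacle I anticipate is the construction and analysis of the quasi-regular cocycle in the third step: translating the geometric displacement of the FSS action into the precise analytic form demanded by Theorem \ref{thm:primevNa}, and simultaneously establishing unboundedness and the weak mixing of $\ell^2(\Gamma/\Lambda)$ uniformly across all CSS$^*$ groups, where the available local structure can vary considerably.
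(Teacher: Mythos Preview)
Your overall architecture---verify the hypotheses of Theorem~\ref{thm:primevNa} for primeness, then exhibit a diffuse subalgebra with non-amenable relative commutant for non-solidity---matches the paper. But two of your choices would not work as stated.

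First, your subgroup $\Lambda = \mathrm{Stab}_\Gamma(x)$ is the wrong object. The paper takes $\Lambda = \Gamma_B$, the pointwise stabilizer of a closed \emph{ball} $B$, not of a point. This is not cosmetic: the Farley--Hughes zipper action decomposes into orbits identified with coset spaces $\Gamma/\Gamma_B$ for balls $B$, so the cocycle of Theorem~\ref{thm:cocycleProj} lands naturally in $\ell^2(\Gamma/\Gamma_B)$. There is no analogous cocycle into $\ell^2(\Gamma/\mathrm{Stab}_\Gamma(x))$ available, and the structural lemmas (weak malnormality, bicentralization) are proved specifically for $\Gamma_B$ using that elements of $\Gamma_B$ are the identity on $B$ and otherwise unconstrained on $X\setminus B$.

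Second, you have misidentified the hypotheses of Theorem~\ref{thm:primevNa}. Weak mixing of $\ell^2(\Gamma/\Lambda)$ is not among them. What is actually required, beyond ICC and no property Gamma, is that the subgroup $H$ be ICC, \emph{weakly malnormal} in $\Gamma$, and that $L(H)$ be \emph{weakly bicentralized} in $L(\Gamma)$. The paper spends Lemmas~\ref{lemma:wkbicentral} and~\ref{lemma:wkmalnormal} establishing exactly these for $H=\Gamma_B$: the bicentralization comes from computing $\mathcal{C}_\Gamma(\Gamma_B)=\mathcal{VC}_\Gamma(\Gamma_B)=\Gamma_{X\setminus B}$ and symmetrically, so that $(L(\Gamma_B)'\cap L(\Gamma))'\cap L(\Gamma)=L(\Gamma_B)$. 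Your proposal does not mention either condition, and they are the technical heart of the primeness argument.

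Your non-solidity sketch is correct in spirit. The paper does it slightly more efficiently by reusing $\Gamma_B$ itself as the diffuse piece: since $L(\Gamma_B)'\cap L(\Gamma)=L(\Gamma_{X\setminus B})$ has already been computed, and Lemma~\ref{lemma:freesubgroup} places a copy of $\mathbb{F}_2$ inside $\Gamma_{X\setminus B}$, non-amenability of the relative commutant is immediate.
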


Proving this theorem requires numerous explicit constructions to show many CSS$^*$ groups satisfy the conditions of Theorem \ref{thm:primevNa}. The starting point is reinterpreting and restricting the Farley-Hughes cocycle to a quasi-regular representation with respect to a specific subgroup. In tandem with the CSS$^*$ property, we leverage the cocycle to prove non-inner amenability (Theorem \ref{MainThm:NonInner}) to confirm that $L(\Gamma)$ does not have property Gamma for $\Gamma$ an FSS$^*$ group, or a countable R\"{o}ver-Nekrashevych group.

\begin{example}\label{ex:introCSS} The class of CSS$^*$ is quite robust and includes the following groups. 
\begin{enumerate}
    \item The Higman-Thompson groups $V_{d,r}$, first defined in \cite{higman1974finitely} and \cite{brown1987finiteness}. 
    \item Countable R\"{o}ver-Nekrashevych groups $V_d(G)$, first defined and in \cite{scott1984construction}, \cite{rover1999constructing}, \cite{nekrashevych2004cuntz}.
    \item QAut$(\mathcal T_{2,c})$, a certain class of self-bijections of the infinite ordered rooted binary tree $\mathcal T_2$, first defined in \cite{lehnert2008gruppen}.
    \item $\text{LS}(X)$, where $X$ is a locally rigid compact ultrametric space, like the Fibonacci Space, first defined in \cite{hughes2009}. 
    \item The FSS group for any non-periodic irreducible subshift of finite type, studied in \cite{Matui20215SFT}. 
\end{enumerate}
    
\end{example}

In \cite{bashwinger2022non}, Zaremsky and the first named author proved the Higman-Thompson groups $T_d$ and $V_d$ are non-inner amenable, extending a result of Haagerup and Olesen in \cite{HO2016}. In this current paper, we take a different approach and show FSS$^*$ groups admit a proper cocycle into a non-amenable representation, and thus are non-inner amenable by \cite[Proposition 2.1]{ozawapopa2010cartanII}. Moreover, we use the relationship between $V_d$ and $V_d(G)$ to prove that countable R\"{o}ver-Nekrashevych groups, which are properly CSS$^*$ groups, are non-inner amenable. This argument resolves a conjecture stated in \cite{bashwinger2022non}, even when the self-similar group $G$ is uncountable.

\begin{introthm}[Theorem \ref{thm:noninneramen} and Theorem \ref{thm:RNnoninneramean}]\label{MainThm:NonInner}
    Let $\Gamma \leq \text{LS}(X)$ be an FSS$^*$ group or a countable R\"{o}ver-Nekrashevych group. Then $\Gamma$ is non-inner amenable.
\end{introthm}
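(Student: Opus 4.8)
The plan is to reduce the statement to the non-inner amenability criterion of Chifan, Sinclair, and Udrea \cite{chifan2016inner}: it suffices to exhibit an unbounded $1$-cocycle $b\colon\Gamma\to\ell^{2}(\Gamma/\Lambda)$ into a quasi-regular representation attached to a subgroup $\Lambda\le\Gamma$ whose associated bimodule is sufficiently \emph{mixing} --- concretely, that $\Lambda$ is weakly malnormal, so that $g\Lambda g^{-1}\cap\Lambda$ is small for $g\notin\Lambda$. Thus the whole argument splits into (i) manufacturing the cocycle together with the subgroup, and (ii) verifying the mixing and properness hypotheses, with the CSS$^{*}$ axiom doing the essential work in (ii).

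For (i) I would start from Hughes's proper action of a similarity-structure group on a CAT$(0)$ cube complex and the proper $1$-cocycle it yields, namely the cocycle underlying the Haagerup property in \cite{hughes2009} that generalizes Farley's cocycle from \cite{farley2003proper}. Re-reading this cocycle through the poset of partitions of $X$ into balls, its target decomposes, orbit by orbit, as a direct sum of quasi-regular representations $\ell^{2}(\Gamma/\Lambda_{i})$, the $\Lambda_{i}$ being pointwise stabilizers of cells. Restricting $b$ to a single, well-chosen orbit produces $b_{\Lambda}\colon\Gamma\to\ell^{2}(\Gamma/\Lambda)$ for a specific stabilizer $\Lambda$ --- say the stabilizer of a chosen ball sitting inside a fixed infinite ball $D$. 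Unboundedness of $b_{\Lambda}$ should then follow from the existence of elements of $\Gamma$ that push the chosen cell arbitrarily deep into $D$.

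For (ii), the local homogeneity of Definition \ref{def:CSS*} is precisely the tool required: given any ball $B$ and the infinite ball $D$, the surjective similarity $h\colon B\to D_{0}\subseteq D$ lets me relocate the support of a group element off of any prescribed finite collection of balls. I would use this both to produce the infinitely many elements moving the chosen cell (hence properness) and, more delicately, to control $g\Lambda g^{-1}\cap\Lambda$: an element lying in two ``generic'' conjugates of $\Lambda$ must fix incompatible ball data, and local homogeneity should force such coincidences to be confined to a small, explicitly describable subgroup, yielding the weak malnormality that makes $\ell^{2}(\Gamma/\Lambda)$ mixing.

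I expect the main obstacle to be exactly this second step: converting the geometric and combinatorial local homogeneity of the CSS$^{*}$ structure into the precise representation-theoretic hypothesis of \cite{chifan2016inner}, while simultaneously ensuring that the cocycle stays unbounded once it is cut down to the single coset space $\Gamma/\Lambda$. Pinning down the right subgroup $\Lambda$ --- large enough that the Farley--Hughes cocycle restricts nontrivially and unboundedly to $\ell^{2}(\Gamma/\Lambda)$, yet malnormal enough to be mixing --- is the crux, and is where I expect the full CSS$^{*}$ axiom, rather than the weaker FSS hypotheses, to be indispensable.
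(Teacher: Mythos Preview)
Your overall strategy --- reduce to the Chifan--Sinclair--Udrea criterion (Theorem~\ref{thm:ChifanSinclair}) and feed it the Farley--Hughes cocycle restricted to a single orbit, so that the target becomes $\ell^{2}(\Gamma/\Gamma_{B})$ for a ball stabilizer $\Gamma_{B}$ --- is exactly what the paper does. But you have misidentified the hypothesis that costs real work. The CSU theorem asks for an unbounded cocycle into a \emph{non-amenable} representation that is mixing relative to a family $\mathcal{G}$, together with $\Gamma$ being ICC over each member of $\mathcal{G}$. For a quasi-regular representation $\ell^{2}(\Gamma/\Lambda)$, mixing relative to $\{\Lambda\}$ is automatic and needs no argument; weak malnormality is \emph{not} the relevant condition here (that hypothesis belongs to the primeness theorem, Theorem~\ref{thm:primevNa}, and is checked separately in Lemma~\ref{lemma:wkmalnormal}). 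What you are missing entirely is the non-amenability of $\lambda_{\Gamma/\Gamma_{B}}$, i.e.\ that $\Gamma_{B}$ is not co-amenable in $\Gamma$. The paper establishes this by building an explicit $\Gamma$-paradoxical decomposition of the coset space $\Gamma/\Gamma_{B}$ (Theorem~\ref{thm:paradoxical}), using the CSS$^{*}$ axiom to manufacture the translating elements; this is the genuine crux and your proposal does not address it.

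Two smaller points. First, you also need $\Gamma$ to be ICC over $\Gamma_{B}$, which is a separate verification (Lemma~\ref{lemma:ICCover}) and does not follow from weak malnormality. Second, be careful with ``properness'': in the general CSS$^{*}$ setting the cocycle is only unbounded, not proper (properness survives only for FSS$^{*}$), so the argument must go through CSU rather than the simpler Ozawa--Popa criterion that requires a proper cocycle.
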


We prove the quasi-regular representation $\lambda_{\Gamma / \Gamma_B}$ is non-amenable by explicitly constructing a paradoxical decomposition of the coset space $\Gamma/\Gamma_B$ in Theorem \ref{thm:paradoxical}, where $B$ is an infinite closed ball and $\Gamma_{B}$ denotes the pointwise stabilizer of $B$. This is an interesting result in its own right, as it produces a paradoxical decomposition of the Higman-Thompson groups $V_{d,r}$ that do not depend directly on free subgroups. 

When the cocycle is additionally proper (which is the case for FSS$^*$ groups) then the non-amenability of the representation implies the groups are properly proximal by a result of Boutonnet, Ioana, and Peterson \cite{boutonnet2021properly}. The notion of proper proximality was introduced in the same paper as a generalization of bi-exact groups, particularly to study free ergodic p.m.p actions of such groups and their resulting von Neumann algebras. Since then, proper proximality has become an important property for both groups and von Neumann algebras (for which an analogous definition is given in \cite{ding2022properly}). Properly proximal groups also serve as a large class of non-inner amenable groups and a wide variety of groups have been shown to be properly proximal (see for example \cite{ding2021ProperProximality} and \cite{horbez2023proper}).

\begin{introcor}[Corollary \ref{cor:properproximal}]\label{MainThmPropP}
     Let $\Gamma \leq \text{LS}(X)$ be a FSS$^*$ group. Then $\Gamma$ is properly proximal.
\end{introcor}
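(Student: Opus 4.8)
The plan is to deduce the corollary from the machinery already assembled for Theorem \ref{MainThm:NonInner}, upgrading the single input that the finiteness hypothesis strengthens, and then to invoke the criterion of Boutonnet, Ioana, and Peterson \cite{boutonnet2021properly}: a countable group that admits a \emph{proper} cocycle into a non-amenable orthogonal representation is properly proximal. Recall that the engine behind Theorem \ref{MainThm:NonInner} is the (reinterpreted) Farley--Hughes cocycle $b \colon \Gamma \to \mathcal{H}$ taking values in the real form of the quasi-regular representation $\lambda_{\Gamma/\Gamma_B}$, and that Theorem \ref{thm:paradoxical} already supplies the non-amenability of this representation by exhibiting a paradoxical decomposition of $\Gamma/\Gamma_B$. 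Thus two of the three ingredients demanded by \cite{boutonnet2021properly} are in hand for every CSS$^*$ group. The only thing left to verify, and precisely the point at which the FSS$^*$ hypothesis (finiteness of each similarity set) rather than the merely countable CSS$^*$ hypothesis becomes essential, is that $b$ is \emph{proper}, not just unbounded.

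First I would isolate this properness. For CSS$^*$ groups the similarity sets are only countable, so there is no control on the multiplicity with which a given displacement is achieved, and the cocycle need only be unbounded. Under the FSS$^*$ hypothesis each set of local similarities is finite, and this is exactly the regime in which Hughes \cite{hughes2009}, building on Farley \cite{farley2003proper}, produced a \emph{proper} action of an FSS group by isometries on a CAT$(0)$ cubical complex. I would identify the Farley--Hughes cocycle as the associated wall (or hyperplane) cocycle of this action and then argue that properness of the action transfers to properness of $b$: because the similarity structure is finite, only finitely many group elements can realize any bounded amount of cubical displacement, so the preimage under $\gamma \mapsto b(\gamma)$ of any bounded subset of $\mathcal{H}$ is finite. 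Concretely this reduces to a counting estimate controlled by the finite cardinality of the similarity sets.

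With properness secured, I would feed the triple consisting of the representation $\lambda_{\Gamma/\Gamma_B}$, its Hilbert space $\mathcal{H}$, and the cocycle $b$ into the Boutonnet--Ioana--Peterson criterion. Combining the properness from the previous step with the non-amenability of $\lambda_{\Gamma/\Gamma_B}$ established in Theorem \ref{thm:paradoxical} yields that $\Gamma$ is properly proximal, completing the proof. The main obstacle I anticipate lies in the bookkeeping of the second step: one must confirm that the cocycle, \emph{after} being reinterpreted and restricted so as to land in the quasi-regular representation $\lambda_{\Gamma/\Gamma_B}$ rather than in the possibly larger coefficient space in which Hughes originally realized it, retains properness --- passing to a subrepresentation or a compression can destroy properness even when it preserves unboundedness. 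I would address this by checking that the identification used to land in $\lambda_{\Gamma/\Gamma_B}$ is a quasi-isometric embedding on the relevant $\Gamma$-orbit, so that the proper action on the cube complex together with the finiteness of the similarity sets forces properness of the compressed cocycle as well.
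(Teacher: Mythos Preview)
Your proposal is correct and follows essentially the same route as the paper: combine properness of the Farley--Hughes cocycle in the FSS$^*$ case with non-amenability of $\lambda_{\Gamma/\Gamma_B}$ from Theorem \ref{thm:paradoxical}, then invoke the Boutonnet--Ioana--Peterson criterion. The concern you raise about whether the \emph{projected} cocycle $b_B$ remains proper is precisely what the final sentence of Theorem \ref{thm:cocycleProj} records (by appeal to Hughes' original argument), so the paper has already packaged that step rather than re-deriving it via a quasi-isometric embedding as you suggest.
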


A group $G$ is said to be \emph{$C^*$-simple} if its reduced $C^*$-algebra $C^*_{\text{red}}(G)$ is simple as a $C^*$-algebra, meaning it has no nontrivial two-sided norm closed ideals. $C^*$-simplicity is also characterized by the property that all unitary representations weakly contained in the left regular representation are actually weakly equivalent to the left regular representation. In \cite{KalantarKennedy2016} and \cite{BKKO2016UniqueTraceProp}, new characterizations of $C^*$-simplicity were given, including if $G$ admits some topologically free boundary action. This led to renewed attention and new classes of groups with this property. The $C^*$-simplicity of the Thompson groups became an important questions after it was established in \cite{HO2016} and \cite{LeBoudecBon2018} that $F$ is non-amenable if and only if $F$ is $C^*$-simple if and only if $T$ is $C^*$-simple. We do not address that question here, but instead show that a certain class of CSS$^*$ groups and their commutator subgroups are $C^*$-simple, generalizing Le Boudec and Matte Bon's result that $V$ and $V(G)$ are $C^*$-simple \cite{LeBoudecBon2018}.

\begin{introthm}[Corollary \ref{cor:C*simple} and Corollary \ref{cor:commutatorisC*simple} ]\label{mainthm:C*simple}
     Let $\Gamma \leq \text{LS}(X)$ be a CSS$^*$ group acting on a compact ultrametric space with no finite balls. Then $\Gamma$ and its commutator subgroup $[\Gamma , \Gamma ]$ are $C^*$-simple.
\end{introthm}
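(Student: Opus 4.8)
The plan is to prove $C^*$-simplicity through the dynamics of the action of $\Gamma$ on $X$ itself and to feed those dynamics into the boundary-theoretic criteria of Kalantar--Kennedy together with the analysis of Le Boudec and Matte Bon for groups of homeomorphisms. The hypothesis that $X$ has no finite balls is exactly what makes $X$ a perfect compact ultrametric space, hence a Cantor set, and it is what will allow the similarity structure to contract arbitrary sets. So the first step is to establish three dynamical properties of $\Gamma \curvearrowright X$: the action is \emph{faithful}, \emph{minimal}, and \emph{extremely proximal}, i.e.\ for every proper closed set $C \subsetneq X$ and every nonempty open $U \subseteq X$ there is $g \in \Gamma$ with $gC \subseteq U$. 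Both minimality and extreme proximality should follow directly from the CSS$^*$ local homogeneity: given any ball $B$ and any (now necessarily infinite) ball $D$, there is a closed subball $D_0 \subseteq D$ and a surjective similarity $h : B \to D_0$, and such an $h$ extends to an element of $\Gamma$. Covering a proper closed $C$ by finitely many balls disjoint from a small ball inside $U$, and contracting each into $U$, yields extreme proximality; the same contraction device applied to orbits yields minimality. This realizes $X$ as a $\Gamma$-boundary.

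The second step is to control the rigid (pointwise) stabilizers. For a clopen ball $B \subseteq X$ let $\Gamma_{(B)} \leq \Gamma$ denote the subgroup of elements supported in $B$. The CSS$^*$ structure restricted to the subspace $B$ makes $\Gamma_{(B)}$ (or a canonical CSS$^*$ subgroup of it) a copy of the ambient type of group acting on $B$; in particular it contains a non-abelian free subgroup, so $\Gamma_{(B)}$ is non-amenable. Since $X$ has no isolated points these rigid stabilizers exist for arbitrarily small $B$, so the action is micro-supported with non-amenable rigid stabilizers. The third step is to conclude: by Kennedy's criterion $\Gamma$ is $C^*$-simple if and only if it has no non-trivial amenable uniformly recurrent subgroup (URS). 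For a faithful, minimal, extremely proximal action the amenable URS's are governed by the stabilizer URS $S_\Gamma(X)$ in the sense analyzed by Le Boudec and Matte Bon; because every point stabilizer $\Gamma_x$ contains a non-amenable rigid stabilizer $\Gamma_{(B)}$ with $x \notin B$, the stabilizer URS is non-amenable and the only amenable URS is trivial. Hence $\Gamma$ is $C^*$-simple, recovering the cases $\Gamma = V, V(G)$ of Le Boudec--Matte Bon.

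For the commutator subgroup I would verify the same package for $[\Gamma,\Gamma]$ rather than trying to pass $C^*$-simplicity down a normal subgroup directly (which fails in general). Here $[\Gamma,\Gamma]$ is a non-trivial normal subgroup of $\Gamma$, so it inherits minimality and extreme proximality of the action on $X$ by the standard fact that both properties descend to non-trivial normal subgroups of minimal extremely proximal actions. Its rigid stabilizers remain non-amenable, since $\bigl([\Gamma,\Gamma]\bigr)_{(B)} \supseteq [\Gamma_{(B)}, \Gamma_{(B)}]$ and the commutator of the non-amenable $\Gamma_{(B)}$ is again non-amenable. Combined with the simplicity of $[\Gamma,\Gamma]$ established earlier in the paper, the identical URS argument yields that $[\Gamma,\Gamma]$ is $C^*$-simple.

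The main obstacle I expect is twofold. First, proving extreme proximality \emph{cleanly} from the similarity structure requires contracting an arbitrary proper closed set, not merely a single ball, which forces a finite-cover argument and genuine use of the ``no finite balls'' hypothesis so that every target ball can absorb a surjective-similarity image. Second, certifying that the rigid stabilizers are non-amenable \emph{and} that their commutators remain so is precisely the input that simultaneously powers the URS argument for $\Gamma$ and lets the whole argument descend to $[\Gamma,\Gamma]$; pinning this down is where the CSS$^*$ axioms and the earlier simplicity results must be invoked carefully.
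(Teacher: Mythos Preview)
Your approach to the $C^*$-simplicity of $\Gamma$ is essentially the paper's: the paper shows via a ping-pong argument (Lemma~\ref{lemma:freesubgroup}) that every rigid stabilizer $\Gamma_U$ contains a copy of $\mathbb{F}_2$, hence is non-amenable, and then invokes \cite[Corollary~1.3]{LeBoudecBon2018} directly. Your detour through minimality, extreme proximality, and the URS machinery is correct but not strictly necessary, since that corollary of Le~Boudec--Matte~Bon requires only non-amenable rigid stabilizers for every non-empty open set, with no minimality hypothesis.

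For $[\Gamma,\Gamma]$ your route genuinely diverges from the paper's. You re-run the rigid-stabilizer argument on $[\Gamma,\Gamma]$ itself, observing that $\bigl([\Gamma,\Gamma]\bigr)_{(B)} \supseteq [\Gamma_{(B)},\Gamma_{(B)}]$, which is non-amenable since $\Gamma_{(B)}$ contains $\mathbb{F}_2$; this is a valid and self-contained proof, and your appeal to simplicity of $[\Gamma,\Gamma]$ is in fact unnecessary for it. The paper instead proves a short structural lemma (Lemma~\ref{lemma:centralizerofcommutator}) that $C_\Gamma([\Gamma,\Gamma])$ is trivial and then cites Ursu's descent result \cite[Theorem~1.3]{Ursu2022}, which says that if $G$ is $C^*$-simple and $N \trianglelefteq G$ has trivial centralizer in $G$, then $N$ is $C^*$-simple. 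The paper's route is shorter and yields the trivial-centralizer lemma as a reusable byproduct (it is later recycled to prove $C^*$-irreducibility of $C^*_{\mathrm{red}}([\Gamma,\Gamma]) \subseteq C^*_{\mathrm{red}}(\Gamma)$ via \cite{bedos2023c}); your route avoids the external dependence on Ursu's theorem and stays entirely within the Le~Boudec--Matte~Bon framework.
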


By restricting the class of CSS$^*$ groups to those whose ultrametric space does not contain finite balls, we can guarantee the rigid stabilizer subgroups are all non-amenable and so the result follows from \cite[Corollary 1.3]{LeBoudecBon2018}.  In fact, a compact ultrametric space with no finite balls will be homeomorphic to a Cantor space. By showing the centralizer of $[\Gamma , \Gamma]$ in $\Gamma$ is trivial, \cite[Theorem 1.3]{Ursu2022} allows the $C^*$-simplicity to pass to the commutator subgroup. The Higman-Thompson groups $V_{d,r}$, the countable R\"{o}ver-Nekrashevych groups $V_d(G)$, and the CSS$^*$ groups arising from irreducible subshifts of finite type all satisfy the conditions of Theorem \ref{mainthm:C*simple}, whereas it was only shown that $V_2$ and $V_2(G)$ are $C^*$-simple in \cite{LeBoudecBon2018}.

For the same subclass of CSS$^*$ groups, we also show that the commutator subgroup is simple. 

\begin{introthm}[Corollary \ref{cor:simplecomm}]\label{mainthm:simplecomm}
     Let $\Gamma \leq \text{LS}(X)$ be a CSS$^*$ group acting on a compact ultrametric space with no finite balls. Then $[\Gamma,\Gamma]$ is a simple group. 
\end{introthm}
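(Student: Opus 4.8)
The plan is to run the classical \emph{commutator trick} of Higman--Epstein, in the form used by Matui and Nekrashevych for groups acting on the Cantor set, and show that every nontrivial normal subgroup $N \trianglelefteq [\Gamma,\Gamma]$ is all of $[\Gamma,\Gamma]$. Since the space has no finite balls it is homeomorphic to the Cantor set, and I would first record the two structural inputs: (i) by the CSS$^*$ property and the absence of finite balls, $[\Gamma,\Gamma]$ acts with enough transitivity that any closed ball can be carried into any other by an element of $[\Gamma,\Gamma]$; and (ii) for each proper ball $B$ the rigid stabilizer $\mathrm{RiSt}(B)$ of elements supported in $B$ is again, via the self-similar structure, a copy of a CSS$^*$ group, in particular non-amenable and nontrivial.

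Next I would prove the key containment lemma: if $1 \neq g \in N$, then $[\mathrm{RiSt}(B),\mathrm{RiSt}(B)] \subseteq N$ for a suitable ball $B$. Because $g$ is a nonidentity homeomorphism of an ultrametric Cantor space, there is a point $x$ with $gx \neq x$, and shrinking to a small ball $B \ni x$ gives $gB \cap B = \emptyset$. For $a,b \in \mathrm{RiSt}(B) \cap [\Gamma,\Gamma]$ one has $[a,g] = a(ga^{-1}g^{-1}) \in N$ by normality, and writing $[a,g] = a \cdot (gag^{-1})^{-1}$ as a product of commuting factors supported in the disjoint balls $B$ and $gB$, a direct computation gives $[[a,g],b] = [a,b]$; since $N$ is normal this forces $[a,b] \in N$. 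Thus $[\mathrm{RiSt}(B),\mathrm{RiSt}(B)] \subseteq N$.

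I would then \emph{spread} this containment: using the transitivity from (i) together with normality of $N$, conjugating $B$ across all balls shows $[\mathrm{RiSt}(B'),\mathrm{RiSt}(B')] \subseteq N$ for every proper ball $B'$. To finish, I would invoke a \emph{fragmentation} statement: every element of $[\Gamma,\Gamma]$ is a product of commutators each of which is a commutator of elements supported in a single proper ball, so that $[\Gamma,\Gamma]$ is generated by $\bigcup_{B'}[\mathrm{RiSt}(B'),\mathrm{RiSt}(B')]$. Combined with the spread containment this yields $[\Gamma,\Gamma] \subseteq N$, hence $N = [\Gamma,\Gamma]$ and simplicity.

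The main obstacle is the fragmentation/generation step: I expect the containment lemma and its spreading to be routine, but one must verify that $[\Gamma,\Gamma]$ is genuinely generated by commutators of rigid stabilizers of balls. This is where the no-finite-balls hypothesis is essential --- it guarantees, via a comparison/paradoxicality argument akin to Theorem \ref{thm:paradoxical}, that rigid stabilizers are large enough to realize the needed localized commutators and that the relevant supports can always be compressed into proper balls, so that no obstruction to fragmentation survives. A secondary point to check is perfectness, i.e.\ that $\mathrm{RiSt}(B)$ is itself perfect so that $[\mathrm{RiSt}(B),\mathrm{RiSt}(B)]$ is nontrivial and the argument is not vacuous; this again follows from the self-similar CSS$^*$ structure of the ball.
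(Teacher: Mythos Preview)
Your approach is the classical Higman--Epstein commutator trick, and it is morally correct, but it is genuinely different from what the paper does. The paper does not run the commutator trick directly; instead it verifies two abstract hypotheses about the action of $\Gamma$ on $X$---that $\Gamma$ is \emph{full} (Theorem~\ref{thm:full}) and \emph{vigorous} (Theorem~\ref{thm:vigorous})---and then invokes the machinery of Bleak et al.\ \cite{BleakEtAll2023} as a black box: normal subgroups of vigorous groups are vigorous (their Lemma~4.14), vigorous groups are generated by elements of small support (Lemma~4.15), and full vigorous groups have simple commutator subgroup (Lemma~4.17). So the paper's contribution is only to check \emph{vigorous} and \emph{full} for CSS$^*$ groups; the heavy lifting is outsourced.

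Your direct argument is essentially what underlies \cite{BleakEtAll2023}, and the containment and spreading steps go through as you describe. However, the step you flag as the main obstacle---fragmentation, i.e.\ that $[\Gamma,\Gamma]$ is generated by $\bigcup_{B'}[\mathrm{RiSt}(B'),\mathrm{RiSt}(B')]$---is precisely the nontrivial content, and you have not actually resolved it: the gestures toward paradoxicality and ``large enough rigid stabilizers'' are not a proof. In the vigorous/full framework this is handled by first showing $[\Gamma,\Gamma]$ is generated by elements of small support and then that any element of small support in a vigorous full group lies in the commutator of a rigid stabilizer; neither step is automatic. A second small gap: since $N$ is only normal in $[\Gamma,\Gamma]$, your containment lemma really yields $[\mathrm{RiSt}(B)\cap[\Gamma,\Gamma],\,\mathrm{RiSt}(B)\cap[\Gamma,\Gamma]]\subseteq N$, and for the spreading step you need conjugating elements from $[\Gamma,\Gamma]$, not $\Gamma$; both are fixable but should be said. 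The paper's route buys you a clean two-property checklist at the cost of a citation; your route, completed, would be self-contained but would essentially reprove the relevant lemmas of \cite{BleakEtAll2023}.
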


We prove this by adapting the methods of \cite{Matui20215SFT} and \cite{BleakEtAll2023} to the language of compact ultrametric spaces and similarity structure groups.  While Theorem \ref{mainthm:simplecomm} recovers Matui's result about the commutator subgroup of the CSS$^*$ group arising from non-periodic irreducible shifts of finite type, it only has partial overlap with the simplicity result from \cite{BleakEtAll2023}. In particular, the Brin-Thompson groups can be viewed as homeomorphism groups of a Cantor space but are not CSS groups. 

The assumption that $X$ contain no finite balls from Theorem \ref{mainthm:C*simple} and Theorem \ref{mainthm:simplecomm} is necessary. Indeed, a compact ultrametric space with finite balls allows for a nontrivial normal subgroup of $\Gamma$ of elements that fix all but finitely many points in $X$ (we say such an element has finite support). In Section \ref{sec:PropofComm}, we prove that the elements of finite support form a normal amenable subgroup, meaning that Theorem \ref{mainthm:C*simple} and Theorem \ref{mainthm:simplecomm} will not hold for a general CSS$^*$ group and, in particular, fails for $\operatorname{QAut}(\mathcal T_{2,c})$ from Example \ref{ex:quasiauto}.

Finally, we prove that CSS$^*$ groups are not acylindrically hyperbolic. 

\begin{introcor}[Corollary \ref{cor:notacylin2}]\label{mainthm:notacylin}
     Let $\Gamma \leq \text{LS}(X)$ be a CSS$^*$. Then $\Gamma$ is not acylindrically hyperbolic. 
\end{introcor}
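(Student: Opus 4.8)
The plan is to argue by contradiction, producing in every case an \emph{infinite normal subgroup} of $\Gamma$ that fails to be acylindrically hyperbolic, and then invoking the theorem of Osin that every infinite normal subgroup of an acylindrically hyperbolic group is itself acylindrically hyperbolic. So suppose $\Gamma$ is acylindrically hyperbolic. I would split into two cases according to the geometry of the underlying compact ultrametric space $X$, since this is precisely the dichotomy that governs which of the paper's structural results apply (finite balls versus none).

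\emph{Case 1: $X$ contains a finite ball.} By the analysis of Section~\ref{sec:PropofComm}, the elements of finite support form a normal amenable subgroup $\Gamma_{\mathrm{fin}} \trianglelefteq \Gamma$. The point to verify is that $\Gamma_{\mathrm{fin}}$ is \emph{infinite}, and this is exactly where the CSS$^*$ local homogeneity of Definition~\ref{def:CSS*} enters: applying it with $B$ a fixed finite ball and $D$ ranging over a nested sequence of disjoint infinite balls of $X$ produces infinitely many pairwise disjoint finite subballs, together with similarities of $\Gamma$ swapping them, and these give infinitely many distinct finite-support elements. An infinite amenable group is never acylindrically hyperbolic (amenable groups act only elementarily on hyperbolic spaces), so $\Gamma_{\mathrm{fin}}$ is an infinite normal subgroup of $\Gamma$ that is not acylindrically hyperbolic, contradicting Osin's theorem.

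\emph{Case 2: $X$ has no finite balls.} Then $X$ is homeomorphic to the Cantor space and, by Theorem~\ref{mainthm:simplecomm}, the commutator subgroup $[\Gamma,\Gamma]$ is a countable infinite simple group (and $[\Gamma,\Gamma]=\Gamma$ when $\Gamma$ is simple, e.g.\ for $V_{d,r}$). I would rule out acylindrical hyperbolicity of $[\Gamma,\Gamma]$ on cardinality grounds: acylindrically hyperbolic groups are SQ-universal by Dahmani--Guirardel--Osin, so if the simple group $[\Gamma,\Gamma]$ were acylindrically hyperbolic, then its only nontrivial quotient, namely itself, would have to contain an isomorphic copy of every countable group; but a countable group contains only countably many isomorphism types of finitely generated subgroups, whereas there are uncountably many isomorphism types of two-generated groups (B.~H.~Neumann), a contradiction. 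Thus $[\Gamma,\Gamma]$ is an infinite normal subgroup of $\Gamma$ that is not acylindrically hyperbolic, again contradicting Osin's theorem.

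In both cases we reach a contradiction, so $\Gamma$ is not acylindrically hyperbolic. I do not expect any single deep estimate to be the difficulty here; rather, the work is in making the case division genuinely exhaustive and in verifying the two ``smallness'' certificates. The main obstacle is Case~1: one must confirm that $\Gamma_{\mathrm{fin}}$ is infinite rather than merely nontrivial (a finite amenable radical is, after all, permitted in an acylindrically hyperbolic group), and this is the step that genuinely uses the local homogeneity of CSS$^*$ groups. The remaining care is bookkeeping around the imported black boxes---Osin's descent to infinite normal subgroups and SQ-universality of acylindrically hyperbolic groups---whose hypotheses (infiniteness and normality of the chosen subgroup) hold in each case.
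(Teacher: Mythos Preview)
Your proof is correct and uses the same dichotomy (finite balls versus none) as the paper, but the two cases are concluded differently. In the no-finite-balls case the paper argues directly that $\Gamma$ itself fails to be SQ-universal: since $[\Gamma,\Gamma]$ is simple, every nontrivial normal subgroup of $\Gamma$ contains $[\Gamma,\Gamma]$, so every proper quotient is abelian; hence $\Gamma$ is not SQ-universal and therefore not acylindrically hyperbolic by \cite{dahmani2017hyperbolically}. You instead show $[\Gamma,\Gamma]$ is not SQ-universal (simple countable groups cannot be, by the Neumann counting argument) and then invoke Osin's descent to infinite normal subgroups---an extra step, but perfectly valid. In the finite-balls case the paper takes a rather different route: it uses Theorem~\ref{thm:noninneramen} (non-inner amenability of $\Gamma$) together with the fact from \cite{dahmani2017hyperbolically} that for acylindrically hyperbolic groups non-inner amenability is equivalent to $C^*$-simplicity, and then notes that the existence of the nontrivial amenable normal subgroup $\Lambda$ rules out $C^*$-simplicity. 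Your argument here is more elementary and self-contained: you only need that $\Lambda$ is infinite and amenable (it is locally finite) and that infinite normal subgroups of acylindrically hyperbolic groups are themselves acylindrically hyperbolic, avoiding any appeal to the non-inner amenability theorem. Both approaches are sound; yours trades the paper's use of Theorem~\ref{MainThm:NonInner} for Osin's normal-subgroup theorem as the unifying black box across both cases.
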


The last corollary is interesting and relevant for a few reasons. First, acylindrically hyperbolic groups form a large and important class of groups studied in geometric group theory, so it is interesting that CSS$^*$ groups do not possess this property, yet many of them have some aspects of non-positive curvature. We also note that if these groups were acylindrically hyperbolic, then several of these results would be more or less trivial. Indeed, by \cite[Theorem 8.1]{dahmani2017hyperbolically}, for  acylindrically hyperbolic groups, ICC, non-inner amenable, and $C^*$-simple are all equivalent conditions. Moreover, it is conjectured by Osin that acylindrically hyperbolic groups should yield prime factors. We also note that acylindrically hyperbolic groups cannot yield McDuff factors, yet there are many Thompson-like groups yielding McDuff factors (the $F$-like groups from cloning systems we referenced earlier). Hence, Thompson-like groups have a spectrum of somewhat unusual properties, and this shows that new tools, techniques, and ideas are needed to establish results of this type for these groups.

We also note that there is a very general criterion from \cite{horbez2023proper} for proving groups are properly proximal---namely, showing that a group admits a proper action by isometries on a CAT$(0)$ space with a rank 1 element. Hughes' original result from \cite{hughes2009} showed FSS groups act properly by isometries on a CAT$(0)$ space, though the same action is no longer necessarily proper for a general CSS group (see Section \ref{sec:cocycle}). However, Sisto proved in \cite{Sisto2018} that every group which acts properly on a CAT$(0)$ space with rank 1 elements, is either virtually cyclic or acylindrically hyperbolic. Hence, FSS$^*$ groups do not contain rank 1 elements. While neither the result of \cite{horbez2023proper} nor Theorem \ref{MainThmPropP} proves in full generality that CSS$^*$ groups are properly proximal, we suspect it should still be true, however we leave it for future work to settle. 

\subsection*{Structure of Paper}

Apart from the introduction, this paper includes eight sections. In Section \ref{sec:simgroups}, we define CSS and CSS$^*$ groups, recalling the definitions of compact ultrametric spaces and local similarities. In Section \ref{sec:propofCSS*}, we prove CSS$^*$ groups are ICC and several important properties that allow elements to be built to have desirable local properties. We also adapt several fundamental facts about finite similarity structures from \cite{hughes2009} to the countable setting. In Section \ref{sec:examples}, we recall examples of CSS groups from the literature and prove that they are CSS$^*$ groups. In Section \ref{sec:cocycle}, we reinterpret the Farley-Hughes proper cocycle for FSS groups for our uses as an unbounded cocycle for CSS$^*$ groups. In Section \ref{sec:nonInnerAmeanCSS}, we prove Theorem \ref{MainThm:NonInner} and Theorem \ref{MainThmPropP}. In Section \ref{sec:prime}, we prove Theorem \ref{mainthm:PrimeCSS}. In Section \ref{sec:PropofComm}, we prove Theorem \ref{mainthm:C*simple}, Theorem \ref{mainthm:simplecomm}, and Theorem \ref{mainthm:notacylin} and a special case where $L([\Gamma,\Gamma]) \subseteq L(\Gamma)$ is an irreducible inclusion of II$_1$ factors and the inclusion $[\Gamma,\Gamma] \le \Gamma$ is $C^*$-irreducible.

\subsection*{Acknowledgments} 
The authors would like to thank R. Skipper and M. Zaremsky for their helpful comments on an earlier version of this paper. P. DeBonis would like to thank T. Sinclair for his invaluable insights, R. de Santiago for the thoughtful discussions, and acknowledge the partial support of NSF Grant DMS-2055155. Some of the results in this paper appeared in the dissertation of P. DeBonis. We also thank the
anonymous referee for their comments and suggestions that greatly improved the
manuscript.

\section{Similarity Structure Groups}\label{sec:simgroups}

A similarity structure group is a certain subgroup of $\text{LS}(X)$, the group of local similarities of some metric (usually a compact ultrametric) space $X$. For example, Thompson's group $V$ acts by homeomorphism on the Cantor set, and more precisely, can be viewed as a subgroup of the local similarity group of the Cantor set. The Cantor set is a compact ultrametric space; one way to see this is to realize it as the end space of the ordered regular rooted binary tree. The direction Hughes took in \cite{hughes2009} to generalize the Thompson groups was to isolate the structure on the Cantor set that produces them and then generalize the structure on the same or different compact ultrametric spaces.

In this section, we review some basic facts about compact ultrametric spaces, define a similarity structure and the FSS groups of Hughes \cite{hughes2009}, and generalize these groups to the larger class of CSS groups. Finally, we introduce CSS$^*$ groups, the class of groups that we study in this paper.

\begin{definition}
    A metric space $(X,d)$ is called an \textit{ultrametric space} if 
    $$d(x,y) \leq \text{max} \{d(x,z), d(z,y)\}$$
    for all $x, y, z \in X$.
\end{definition}

Ultrametric spaces and compact ultrametric spaces, in particular, arise naturally in many different areas of math. The p-adic integers form an ultrametric space, under the metric induced by the $p$-adic norm, $\vert x \vert_p := p^{-1\max \{n \in \mathbb N \: \vert \: p^n \: \text{divides} \: x\}}$. For this paper, the prototypical example is the end space of a tree. In fact, Hughes proved a categorical equivalence between complete ultrametric spaces and the end space of geodesically complete $\mathbb R$-trees \cite{hughes2004treescategorical} (see the beginning of Section \ref{sec:examples} for definitions). Another class of interesting examples that arises in symbolic dynamics is subshifts of finite type, sometimes referred to as topological Markov shifts. These closed sets of infinite sequences also turn out to be the end space of certain $\mathbb R$-trees. 

\begin{example}(\cite[Example 2.2]{hughes2009})\label{ex:treemetric}
    Let $(T,d)$ be a locally finite simplicial tree equipped with the natural unique metric, making $(T,d)$ into an $\mathbb{R}$-tree. The distance between two vertices $v_1$ and $v_2$ is the shortest path of edges connecting the two. More precisely, it is the minimum number of edges in a sequences $e_0, e_1, \dots, e_n$ with $v_1 \in e_0$, $v_2 \in e_n$ and $e_i \cap e_{i+1} \neq \emptyset$ for $0 \leq i \leq n-1$. Given a root, or base vertex, $v \in T$ the \textit{end space} of $(T,v)$ is defined by 
    \[\text{end}(T,v) = \{ x: [0, \infty) \rightarrow T \: \vert \:x(0)=v\:  \text{and} \: x \: \text{is an isometric embedding}\}. \]
    For any $x, y \in$ end$(T,v)$ define the following metric

        \[d_e(x,y) = \begin{cases}
            0 & \text{if} \: x = y, \\
            1/e^{t_0} & \text{if} \: x \neq y \:\text{and} \ t_0 = \sup\{t \geq 0 \: \vert \: x(t) = y(t) \} 
        \end{cases}\]
    which makes $(\text{end}(T, v), d_e)$ into a compact ultrametric space with diameter less than one. 
    
\end{example}

The following are standard facts about ultrametric spaces, which appear in this form in \cite[Lemma 3.2]{FarleyHughes15} for instance. 

\begin{lemma}
Let $X$ be an ultrametric space.
    \begin{enumerate}
        \item Given two open balls $B_1$ and $B_2$ in $X$, then either one contains the other, or the balls are disjoint. 
        \item Every open ball in $X$ is a closed set, and every closed ball in $X$ is an open set. 
        \item Let $B_r(x)$ be an open ball in $X$. If $y \in B_r(x)$, then $B_r(x) = B_r(y)$.
        \item If $X$ is compact, then each open ball $B$ is contained in at most finitely many distinct open balls of $X$. 
        \item If $X$ is compact, then each open ball in $X$ is a closed ball and each closed ball is an open ball.
        \item If $X$ is compact and $x$ is not an isolated point, then each open ball $B_r(x)$ is partitioned by its maximal proper open subballs, of which there are only finitely many. 
    \end{enumerate}
\end{lemma}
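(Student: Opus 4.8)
The plan is to treat the six parts in a logical order that isolates the one genuinely substantive input---a compactness estimate on the set of realized distances---and to derive everything else from the strong triangle inequality by routine bookkeeping. Throughout I write $B_r(x) = \{y : d(x,y) < r\}$ for an open ball and $\bar B_r(x) = \{y : d(x,y) \le r\}$ for a closed ball, and I note that parts (1)--(3) use only the ultrametric inequality, with no compactness. First I would prove (3): if $y \in B_r(x)$ and $z \in B_r(y)$, then $d(x,z) \le \max\{d(x,y), d(y,z)\} < r$, so $B_r(y) \subseteq B_r(x)$, and symmetry gives equality---every point of a ball is a center. Part (1) is then immediate: if $B_1 \cap B_2 \neq \emptyset$, pick $z$ in the intersection, recenter both balls at $z$ via (3) to get $B_1 = B_r(z)$ and $B_2 = B_s(z)$, and conclude $B_1 \subseteq B_2$ when $r \le s$. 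For (2), the complement of $B_r(x)$ is open because $d(z,x) \ge r$ forces $B_r(z) \cap B_r(x) = \emptyset$ (a common point would give $d(x,z) < r$), and $\bar B_r(x)$ is open because $B_r(z) \subseteq \bar B_r(x)$ for each $z \in \bar B_r(x)$, again by the strong triangle inequality.

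The crux, which I would isolate as a standalone claim, is the following spectral estimate: for compact $X$ and any $\rho > 0$, the distance set $D_x := \{d(x,y) : y \in X\}$ meets $[\rho,\infty)$ in only finitely many points. I would prove this by contradiction. An infinite family of points realizing distinct distances $\ge \rho$ from $x$ has, by compactness, a convergent subsequence $y_{n_k} \to y$; for large $k$ one has $d(y, y_{n_k}) < d(x,y)$, and applying the strong triangle inequality in both directions forces $d(x, y_{n_k}) = d(x,y)$ for all large $k$, contradicting the distinctness of the distances. Consequently $D_x$ is finite or a decreasing sequence accumulating only at $0$.

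Granting this estimate, parts (4), (5), and (6) are read off from the spectrum $D_x$. For (4), every open ball containing $B_\rho(x)$ contains $x$, hence is centered at $x$ by (3) and so equals $B_s(x)$ for some threshold $s$; distinct such balls correspond to distinct consecutive values of $D_x$ in $[\rho,\infty)$, of which there are finitely many. For (5), setting $m = \max(D_x \cap [0,r))$ (which exists by the estimate) yields $B_r(x) = \bar B_m(x)$, and dually the least value of $D_x$ exceeding $r$ exhibits $\bar B_r(x)$ as an open ball. For (6), with $x$ not isolated and $B = \bar B_m(x)$, the relation $d(y,y') < m$ is an equivalence relation on $B$ by the strong triangle inequality, whose classes are precisely the balls $B_m(y)$; I would verify these are the maximal proper open subballs (any strictly larger recentered open ball is already all of $B$) and that representatives of distinct classes lie at mutual distance exactly $m$, so compactness bounds their number.

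The main obstacle is the compactness estimate of the second paragraph: once the distance spectrum is understood to accumulate only at $0$, parts (4)--(6) are essentially bookkeeping, while parts (1)--(3) are formal consequences of the ultrametric inequality. The only delicate point to flag is the degenerate case of singletons at isolated points in (5), where the convention on admissible radii must be invoked.
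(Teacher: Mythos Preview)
Your proof is correct and complete. The paper, however, does not actually prove this lemma: it is stated there as a list of ``standard facts about ultrametric spaces'' with a citation to \cite[Lemma 3.2]{FarleyHughes15} and no argument is given. So there is no paper proof to compare your approach against; you have simply supplied what the paper omits. Your organization---isolating the spectral estimate on $D_x \cap [\rho,\infty)$ as the single nontrivial input from compactness, and then reading off (4)--(6) from the discreteness of the distance set away from zero---is the standard and cleanest way to handle these facts, and your treatment of (1)--(3) via recentering is exactly right.
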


\begin{convention} As a convention, throughout this paper, if $(X,d)$ is a compact ultrametric space, we will assume every ball is closed, and properly contained in $X$, unless specified otherwise. 
\end{convention}

Before we introduce similarity structures on metric spaces, and the associated maximal subgroup determined by the similarity structure, let us define some basic terms.

\begin{definition}
Let $(X,d_{X})$ and $(Y,d_{Y})$ be metric spaces.
\begin{enumerate}
\item For $\lambda > 0$, a map $f : X \to Y$ is called a \textit{$\lambda$-similarity} provided $d_{Y}(f(x),f(y)) = \lambda d_{X}(x,y)$ for any $x,y \in X$. A map between metric spaces is simply called a similarity if there exists such a constant. 
\item A homeomorphism $f : X \to Y$ is a \textit{local similarity} if for every $x \in X$, there exist constants $r, \lambda > 0$ such that the restriction $f\vert_{B(x,r)} : B(x,r) \to B(f(x), \lambda r)$ is a surjective $\lambda$-similarity.
\item Let LS$(X)$ be the \textit{group of all local similarities from X onto X}. 
\end{enumerate}
\end{definition}

We can now define a similarity structure on a metric space.

\begin{definition}[\cite{hughes2009}]
    A \textit{similarity structure} for $X$ is a function, Sim, that assigns to each pair $B_1, B_2$ of closed balls in $X$ a (possible empty) set $\text{Sim}(B_1,B_2)$ of surjective similarities $B_1 \rightarrow B_2$ such that whenever $B_1, B_2, B_3$ are closed balls in $X$, the following properties hold:
    \begin{enumerate}
        \item (Identities) id$_{B_1} \in$ $\text{Sim}(B_1,B_1)$.
        \item (Inverses) If $h \in$ $\text{Sim}(B_1,B_2)$, then $h^{-1} \in$ $\text{Sim}(B_2,B_1)$.
        \item (Composition) If $h_1 \in$ $\text{Sim}(B_1,B_2)$ and $h_2 \in$ $\text{Sim}(B_2,B_3)$, then the composition $h_2h_1 \in \text{Sim}(B_1,B_3)$
        \item (Restrictions) If $h \in \text{Sim}(B_1,B_2)$ and $B_3 \subseteq B_1$, then $h\vert_{B_3} \in \text{Sim}(B_3,h(B_3))$. 
    \end{enumerate}
    If the set $\text{Sim}(B_1,B_2)$ is always finite, we call this a \textit{finite similarity structure}. If instead $\text{Sim}(B_1,B_2)$ is countable, we call this a \textit{countable similarity structure}. 
\end{definition}

\begin{definition}
Let $B$ be a closed ball in a metric space $(X,d)$ endowed with a similarity structure $\text{Sim}_{X}$. An embedding $h : B \to X$ is \textit{locally determined} by $\text{Sim}_{X}$ provided for any $x \in B$, there exists a closed ball $B'$ in $B$ such that $x \in B'$, $h(B')$ is a closed ball in $X$, and $h\vert_{B'} \in \text{Sim}_{X}(B',h(B'))$.  
\end{definition}

Given a similarity structure on a metric space, we can associate a natural group to it whose elements are determined by the similarity structure in the above sense.

\begin{definition}\label{def:CSS}
Let $(X,d)$ be a compact ultrametric space, and let $\text{Sim}_{X}$ be a given similarity structure. The group $\Gamma := \Gamma (\text{Sim}_{X})$ associated to $\text{Sim}_{X}$ is the set of all homeomorphisms of $X$ locally determined by $\text{Sim}_{X}$. We call $\Gamma$ the similarity structure group.
\begin{enumerate}
    \item If $\text{Sim}_X$ is a finite similarity structure for $X$, then we call $\Gamma$ a FSS group.
    \item If $\text{Sim}_X$ is a countable similarity structure for $X$, then we call $\Gamma$ a CSS group.
\end{enumerate}
In either case, $\Gamma$ is the unique largest subgroup of LS$(X)$ locally determined by $\text{Sim}_X$ (see \cite[Proposition 3.5]{hughes2009}).
\end{definition}

In this paper, we consider a natural class of CSS groups for which our main results hold. In particular, our results hold for CSS groups, whose compact ultrametric space possesses a local ``self-similar'' property. That is, given any proper ball in $X$ there will be infinitely many balls homeomorphic to it. We call this class of groups CSS$^*$ groups, which we now define precisely.

\begin{definition}\label{def:CSS*}
    Let $X$ be an infinite compact ultrametric space and $\Gamma \leq \text{LS}(X)$ a CSS or FSS group. If the following two conditions hold, we say  $\Gamma$ is a CSS$^*$ or FSS$^*$ group, respectively. Notice that FSS$^* \subset$ CSS$^*$.
    \begin{enumerate}
        \item Any infinite ball in $X$ has at least two disjoint infinite subballs.
        \item For any proper closed ball $B$ and infinite proper closed ball $D$ in $X$, there exists a subball $D_0 \subseteq D$ such that $\text{Sim}(B,D_0)$ is non-empty.
    \end{enumerate} 
\end{definition}

In Section \ref{sec:examples} we prove the Higman-Thompson groups $V_{d,r}$, countable R\"{o}ver- Nekrashevych groups, and some braided diagram groups give rise to FSS$^*$ or CSS$^*$ groups. We also show there is a natural similarity structure that can be placed on a large class of irreducible subshifts of finite type so that we can consider FSS$^*$ groups inside the group of local similarities. We encourage the reader to consult \cite{hughes2009} or visit Section \ref{sec:examples} if they are not familiar with FSS groups.

\section{Properties of \texorpdfstring{CSS$^*$}{CSS*} Groups}\label{sec:propofCSS*}

In this section, we prove fundamental facts about CSS$^*$ groups needed for our main results, but we first start with recalling some of the structure already exhibited by Hughes and Farley that will also be needed. Throughout this section $X$ is an infinite compact ultrametric space unless specified otherwise. 

Our first main goal is to prove that CSS groups are countable (Lemma \ref{lemma:countable}). The proof amounts to recapitulating Hughes' argument from \cite{hughes2009} for why FSS groups are countable with only minor modifications to the original argument. Several of these ideas are also needed in Section \ref{sec:cocycle}. Below, we recall the notion of \emph{region}, defined in \cite{hughes2009} for the class of FSS groups, which we consider for the entire class of CSS groups.

\begin{definition}(\cite[Definition 3.6]{hughes2009})
     Let $g \in \Gamma$. If $B$ is a ball in $X$ such that $g(B)$ is also a ball and $g\vert_B \in \text{Sim} (B, g(B))$, then we say $B$ is a \textit{region for} $g \in \Gamma$. A region $B$ for $g \in \Gamma$ not properly contained in any other region for $g$ is called a \textit{maximum region for g}. 
\end{definition}

The following two properties can be extended to the class of CSS groups following the same proofs as in \cite{hughes2009}. We leave the details to the reader. 

\begin{lemma}[Properties of Regions] \hspace{3 in}
    \begin{enumerate}
        \item \cite[Lemma 3.7]{hughes2009} For each $g \in \Gamma$ and for each $x \in X$ there exists a unique maximum region $B$ for $g$ such that $x \in B$.
        \item \cite[Lemma 3.9]{hughes2009} If $g \in \Gamma$ and $R$ is a region for $g$, then $g(R)$ is a region for g$^{-1}$. In addition, if $R$ is a maximum region for $g$, then $g(R)$ is a maximum region for $g^{-1}$.
    \end{enumerate}
\end{lemma}

We also adopt the following nonstandard definition for a partition of a compact ultrametric space $X$.  A \textit{partition} of $X$, will always mean a partition into finitely many disjoint balls. 

\begin{definition}(\cite[Definition 3.8]{hughes2009})\label{def:maxpart}
 If $g \in \Gamma$, the \textit{maximum partition for g} is the partition of $X$ into the maximum regions of $g$.
\end{definition}

For compact ultrametric spaces, since nonempty intersection implies containment, it is straightforward that any two partitions have a common refinement, but we prove it for completeness. 

\begin{lemma}
    Let $X$ be a compact ultrametric space. Let $\mathcal P_1$ and $\mathcal P_2$ be two partitions of $X$. Then we can find a common refinement of $\mathcal P_1$ and $\mathcal P_2$ that is a partition. 
\end{lemma}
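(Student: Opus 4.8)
The plan is to construct the common refinement directly by intersecting the balls of the two partitions, using the key ultrametric property that any two balls are either nested or disjoint. First I would set $\mathcal{P}_1 = \{A_1, \dots, A_m\}$ and $\mathcal{P}_2 = \{C_1, \dots, C_n\}$, both finite collections of pairwise disjoint balls whose union is $X$. I would then define the proposed common refinement to be
\[
\mathcal{P} := \{A_i \cap C_j : 1 \leq i \leq m,\ 1 \leq j \leq n,\ A_i \cap C_j \neq \emptyset\}.
\]

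The crucial observation, and the step doing all the work, is that each nonempty intersection $A_i \cap C_j$ is itself a ball. By part (1) of the standard ultrametric facts lemma, for any two balls either one contains the other or they are disjoint. So if $A_i \cap C_j \neq \emptyset$, then either $A_i \subseteq C_j$ or $C_j \subseteq A_i$; in the first case $A_i \cap C_j = A_i$ and in the second $A_i \cap C_j = C_j$. Either way the intersection equals one of the two balls and is therefore a ball. This also immediately shows $\mathcal{P}$ consists of balls, so it qualifies as a partition in the sense of Definition \ref{def:maxpart} once I verify finiteness, disjointness, and covering.

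Next I would verify the three partition axioms. Finiteness is clear since $\mathcal{P}$ has at most $mn$ elements. For the covering property, every $x \in X$ lies in some unique $A_i$ and some unique $C_j$ (as each $\mathcal{P}_k$ covers $X$), hence in $A_i \cap C_j \in \mathcal{P}$, so $\bigcup \mathcal{P} = X$. For pairwise disjointness, suppose $(A_i \cap C_j) \cap (A_{i'} \cap C_{j'}) \neq \emptyset$; then in particular $A_i \cap A_{i'} \neq \emptyset$, forcing $i = i'$ since the $A_i$ are disjoint, and likewise $j = j'$, so the two pieces coincide. Finally, $\mathcal{P}$ refines both $\mathcal{P}_1$ and $\mathcal{P}_2$ because each $A_i \cap C_j$ is contained in $A_i$ and in $C_j$ by construction.

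I do not anticipate a serious obstacle here; the statement is essentially a formal consequence of the dichotomy in the ultrametric facts lemma, and the only substantive point is confirming that a nonempty intersection of two balls is again a ball rather than some irregular set. The mild subtlety worth stating explicitly is that this reduces to the nested-or-disjoint property rather than requiring any metric estimate, which is precisely what makes the ultrametric setting cleaner than a general metric space, where intersections of balls need not be balls.
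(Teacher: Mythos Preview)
Your proof is correct and is actually cleaner than the paper's. The paper argues iteratively: whenever some $B_i \in \mathcal P_1$ meets some $D_j \in \mathcal P_2$, say with $B_i \subseteq D_j$, it replaces $D_j$ by an arbitrary partition of $D_j$ into balls that includes $B_i$, and repeats until no further replacements are needed. Your approach instead writes down the answer in one stroke as $\{A_i \cap C_j : A_i \cap C_j \neq \emptyset\}$, using the nested-or-disjoint dichotomy to see that each piece is itself a ball. Both arguments rest on exactly the same ultrametric fact; yours avoids the informal ``repeat this process'' step and automatically yields the coarsest common refinement, whereas the paper's iterative version may introduce superfluous balls depending on the choices made at each replacement.
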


\begin{proof}
    Let $\mathcal P_1 = \{B_1, \dots, B_n \}$ and $\mathcal P_2 = \{D_1, \dots, D_n\}$. Since $X$ is a compact ultrametric space, if $B_i \cap D_j \neq \emptyset$ for any $i$ and $j$, then either $B_i \subseteq D_j$ or $B_i \supseteq D_j$. Without loss of generality, assume the former holds and note $B_i$ has empty intersection with all of the other $D_j$. Let $\{\tilde B_1, \dots , \tilde B_r\}$ be a partition of $D_j$ into balls such that one of these new balls is equal to $B_i$. Replace $D_j$ with $\tilde B_1, \dots , \tilde B_r$. Repeating this process whenever $B_i \cap D_j \neq \emptyset$. We arrive at a new partition $\mathcal P_{1,2}$ that is a refinement of both $P_1$ and $P_2$. 
    
\end{proof}

\begin{lemma}(cf \cite[Lemma 3.10]{hughes2009})\label{lemma:partition}
    Let $\mathscr{P}_+$ and $\mathscr{P}_-$ be two partitions of $X$ into closed balls. The set $\Gamma(\mathscr{P}_{\pm}) = \{ g \in \Gamma : \mathscr{P}_+$ is the maximum partition for $g$ and $\mathscr{P}_-$ is the maximum partition $g^{-1}\}$ is countable.
\end{lemma}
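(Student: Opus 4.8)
The plan is to show that every $g \in \Gamma(\mathscr{P}_{\pm})$ is pinned down by a finite packet of data, each piece of which ranges over a countable set, so that $\Gamma(\mathscr{P}_{\pm})$ injects into a countable set. This is exactly Hughes' argument in the FSS case, with the single change that the sets $\text{Sim}(B_1,B_2)$ are now countable rather than finite; in the FSS case one gets \emph{finiteness} of $\Gamma(\mathscr{P}_{\pm})$, whereas here we only get countability.

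First I would unpack the structural content of the hypothesis. Write $\mathscr{P}_+ = \{B_1,\dots,B_n\}$ and $\mathscr{P}_- = \{D_1,\dots,D_m\}$, and fix $g \in \Gamma(\mathscr{P}_{\pm})$. Since $\mathscr{P}_+$ is the maximum partition for $g$, each $B_i$ is a maximum region for $g$, so $g(B_i)$ is a ball and $g|_{B_i} \in \text{Sim}(B_i, g(B_i))$. Because $g$ is a bijection of $X$ and the $B_i$ partition $X$, the images $\{g(B_i)\}$ are disjoint balls covering $X$. By fact (3) recalled in the Definition on regions, each $g(B_i)$ is a \emph{maximum} region for $g^{-1}$. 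Invoking the uniqueness of maximum regions (fact (2)), the family $\{g(B_i)\}$ of disjoint maximum regions covering $X$ must coincide with the partition of $X$ into maximum regions for $g^{-1}$, namely $\mathscr{P}_-$. In particular $m=n$, and $g$ induces a bijection $\sigma_g$ of $\{1,\dots,n\}$ with $g(B_i) = D_{\sigma_g(i)}$.

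With this in hand, I would define the encoding map
$$\Phi(g) = \bigl(\sigma_g,\; g|_{B_1},\dots,g|_{B_n}\bigr), \qquad g|_{B_i} \in \text{Sim}(B_i, D_{\sigma_g(i)}).$$
Since the $B_i$ partition $X$, a homeomorphism is completely determined by its restrictions to the $B_i$, so $\Phi$ is injective on $\Gamma(\mathscr{P}_{\pm})$. For the counting, there are at most $n!$ choices of $\sigma$, and for each fixed $\sigma$ the restriction tuple lies in $\prod_{i=1}^{n}\text{Sim}(B_i, D_{\sigma(i)})$, a finite product of countable sets and hence countable. Therefore $\Phi$ maps $\Gamma(\mathscr{P}_{\pm})$ injectively into a finite union of countable sets, which is countable, proving the claim.

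The only genuinely delicate step is the identification $\{g(B_i)\} = \mathscr{P}_-$: one must argue that $g$ carries the maximum partition of $g$ onto the maximum partition of $g^{-1}$, rather than merely to \emph{some} collection of regions. This is precisely what facts (2) and (3) supply, so the difficulty is contained; everything after it is bookkeeping, and the passage from finite to countable similarity structures requires no new idea beyond replacing ``finite'' by ``countable'' in the final cardinality count.
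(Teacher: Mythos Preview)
Your proposal is correct and follows essentially the same approach as the paper: both encode $g \in \Gamma(\mathscr{P}_\pm)$ by the pair consisting of the induced bijection $\mathscr{P}_+ \to \mathscr{P}_-$ together with the tuple of restrictions $(g|_{B_1},\dots,g|_{B_n})$, and then count. If anything, you are more careful than the paper in explicitly justifying via facts (2) and (3) that $\{g(B_i)\}$ coincides with $\mathscr{P}_-$; the paper simply asserts that $g_*(B)=g(B)$ defines an element of $\mathrm{Bi}(\mathscr{P}_+,\mathscr{P}_-)$ without spelling this out.
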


 \begin{proof}
     Let $\mathscr{P}_+ = \{B_1, \dots B_n \}$ and $n = \vert \mathscr{P}_+\vert $ be the cardinality of $\mathscr{P}_+$. Let Bi$(\mathscr{P}_+,\mathscr{P}_-)$ be the finite set of bijections from $\mathscr{P}_+$ to $\mathscr{P}_-$. For $h \in$ Bi$(\mathscr{P}_+, \mathscr{P}_-)$, let $S_h := \Pi_{i=1}^n \text{Sim}(B_i,h(B_i))$ and note that S$_h$ is countable. Define the countable set $F$ to be $F = \bigcup_{h \in Bi(\mathscr{P}_+,\mathscr{P}_-)}(h, S_h)$. If $g \in \Gamma(\mathscr{P}_{\pm})$, then $g_* \in$ Bi$(\mathscr{P}_+,\mathscr{P}_-)$ is defined by $g_*(B) = g(B)$ for all $B \in \mathscr{P}_+$. Then the map $\Gamma (\mathscr{P}_{\pm}) \rightarrow F$ given by $g \mapsto (g_*,(g\vert_{B_1}, \dots g\vert_{B_n}))$ is an injection, so $\Gamma (\mathscr{P}_{\pm})$ is countable.   
 \end{proof}

 \begin{lemma}(\cite[Lemma 3.11]{hughes2009}) Let $\mathscr{P}_+$ and $\mathscr{P}_-$ be two partitions of $X$ into closed balls. The set $\Gamma_{\text{ref}}(\mathscr{P}_\pm) = \{g \in \Gamma \vert \mathscr{P}_+$ refines the maximum partition for $g$ and $ \mathscr{P}_-$ refines the maximum partition for $g^{-1} \:\}$ is countable
     
 \end{lemma}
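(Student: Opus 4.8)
The plan is to reduce the statement to the preceding Lemma~\ref{lemma:partition} by exploiting the finiteness of the partitions $\mathscr{P}_+$ and $\mathscr{P}_-$. The key observation is simply that a fixed \emph{finite} partition refines only finitely many other partitions. Recall that ``$\mathscr{P}_+$ refines the maximum partition for $g$'' means that the maximum partition $\mathscr{M}_+(g)$ of $g$ is a coarsening of $\mathscr{P}_+$, i.e.\ every maximum region of $g$ is a union of blocks of $\mathscr{P}_+$. Writing $\mathscr{P}_+ = \{B_1,\dots,B_n\}$, any coarsening of $\mathscr{P}_+$ is obtained by grouping the indices $\{1,\dots,n\}$, and there are only finitely many partitions of a finite index set. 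Hence, as $g$ ranges over $\Gamma_{\mathrm{ref}}(\mathscr{P}_\pm)$, the maximum partition $\mathscr{M}_+(g)$ takes only finitely many possible values, and likewise $\mathscr{M}_-(g^{-1})$ takes only finitely many values among the coarsenings of $\mathscr{P}_-$.

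With this in hand, the decomposition is immediate. First I would let $\mathcal{C}_+$ denote the (finite) collection of partitions $\mathscr{Q}_+$ of $X$ into balls that are coarsenings of $\mathscr{P}_+$, and similarly $\mathcal{C}_-$ for $\mathscr{P}_-$; note that the maximum partition of any $g$ is by definition a genuine partition of $X$ into balls (its maximum regions), so it automatically lies in $\mathcal{C}_+$ whenever it is coarsened by $\mathscr{P}_+$, and there is no need to worry about combinatorial groupings whose union fails to be a ball. Then I would observe the containment
\[
\Gamma_{\mathrm{ref}}(\mathscr{P}_\pm) \;\subseteq\; \bigcup_{\mathscr{Q}_+ \in \mathcal{C}_+} \ \bigcup_{\mathscr{Q}_- \in \mathcal{C}_-} \Gamma(\mathscr{Q}_\pm),
\]
since any $g \in \Gamma_{\mathrm{ref}}(\mathscr{P}_\pm)$ has $\mathscr{M}_+(g) = \mathscr{Q}_+$ and $\mathscr{M}_-(g^{-1}) = \mathscr{Q}_-$ for exactly one such pair, placing $g$ in the corresponding $\Gamma(\mathscr{Q}_\pm)$.

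To finish, I would invoke Lemma~\ref{lemma:partition}, which guarantees that each $\Gamma(\mathscr{Q}_\pm)$ is countable. The index set $\mathcal{C}_+ \times \mathcal{C}_-$ is finite, so the right-hand side above is a finite union of countable sets, hence countable, and therefore so is the subset $\Gamma_{\mathrm{ref}}(\mathscr{P}_\pm)$. I do not expect any serious obstacle here: the entire content is the finiteness of the set of coarsenings of a finite partition together with the already-established countability of each ``exact maximum partition'' stratum. The only point requiring a moment of care is pinning down the direction of the refinement relation and confirming that the maximum partition of any admissible $g$ really is one of the finitely many ball-coarsenings of $\mathscr{P}_+$ (and $\mathscr{P}_-$), which follows directly from the definition of maximum regions as balls.
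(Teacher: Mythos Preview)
Your proposal is correct and follows essentially the same approach as the paper, which simply defers to Hughes' original argument in \cite[Lemma~3.11]{hughes2009} with the countable version of Lemma~\ref{lemma:partition} substituted in. Your decomposition over the finitely many ball-coarsenings of $\mathscr{P}_\pm$ and appeal to Lemma~\ref{lemma:partition} on each stratum is exactly that argument spelled out.
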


 \begin{proof}
      This follows from the proof \cite[Lemma 3.11]{hughes2009}, using the countable version of \cite[Lemma 3.10]{hughes2009}, ie Lemma \ref{lemma:partition}, instead. 
 \end{proof}

 \begin{lemma}(cf \cite[ Lemma 3.12]{hughes2009})\label{lemma:countable}
     If $\Gamma$ is a CSS group, then it is countable. 
 \end{lemma}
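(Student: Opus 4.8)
The plan is to reduce the countability of $\Gamma$ to a countable union of countable sets, using the two preceding lemmas. The key observation is that every element $g \in \Gamma$ has a well-defined maximum partition for $g$ and a maximum partition for $g^{-1}$, both of which are partitions of $X$ into finitely many closed balls (by the fact that $X$ is compact, so each open ball has only finitely many maximal proper subballs, and maximum regions partition $X$). Thus each $g$ determines a pair $(\mathscr{P}_+, \mathscr{P}_-)$ of such partitions, and $g$ belongs to the set $\Gamma(\mathscr{P}_{\pm})$ studied in Lemma~\ref{lemma:partition}.

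First I would argue that the collection of all partitions of $X$ into finitely many closed balls is itself countable. This is where the compactness of $X$ enters decisively: since each closed ball is also an open ball and each ball has only finitely many maximal proper subballs, the balls of $X$ are organized in a locally finite rooted tree (the tree whose vertices are the balls and whose edges record the maximal-subball relation). A partition into finitely many balls corresponds to a finite ``antichain'' or cut in this tree, and a locally finite rooted tree has only countably many vertices and hence only countably many finite cuts. So the set $\mathcal{D}$ of all partitions of $X$ into finitely many closed balls is countable.

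Next I would write
\[
\Gamma = \bigcup_{(\mathscr{P}_+,\mathscr{P}_-) \in \mathcal{D} \times \mathcal{D}} \Gamma(\mathscr{P}_{\pm}),
\]
noting that every $g \in \Gamma$ lands in exactly one such piece (the one indexed by its maximum partition and the maximum partition of its inverse). By Lemma~\ref{lemma:partition}, each set $\Gamma(\mathscr{P}_{\pm})$ is countable, and the index set $\mathcal{D} \times \mathcal{D}$ is countable by the previous paragraph. A countable union of countable sets is countable, so $\Gamma$ is countable.

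The main obstacle I anticipate is the claim that there are only countably many partitions of $X$ into finitely many closed balls; everything else follows mechanically from Lemma~\ref{lemma:partition}. The resolution is the locally finite tree structure on the balls of $X$, which is precisely what compactness buys us (without compactness, a ball could have infinitely many maximal subballs and the tree of balls would fail to be locally finite, breaking the count). One subtle point worth checking is that the maximum partition for $g$ is genuinely finite---this is guaranteed because the maximum regions are pairwise disjoint balls covering the compact space $X$, and an open cover of $X$ by disjoint open balls must be finite by compactness. With that established, the argument is a routine countable-union bookkeeping, exactly paralleling Hughes' original proof for the FSS case with ``finite'' replaced by ``countable'' in the relevant places.
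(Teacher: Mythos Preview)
Your proposal is correct and takes essentially the same approach as the paper, which simply defers to Hughes' original argument with Lemma~\ref{lemma:partition} substituted for its finite analogue. You have spelled out the one point the paper leaves implicit---that a compact ultrametric space has only countably many closed balls (via the locally finite tree of balls rooted at $X$) and hence only countably many finite partitions---and then decomposed $\Gamma$ over pairs of maximum partitions exactly as Hughes does.
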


 \begin{proof}
     This follows from the proof \cite[Lemma 3.12]{hughes2009}, using the countable version of \cite[Lemma 3.10]{hughes2009}, ie Lemma \ref{lemma:partition}, instead. 
 \end{proof}

The next lemma is an important tool for constructing elements $g$ of $\Gamma$ from local similarities. In practice, the pasting lemma is often paired with Proposition \ref{prop:extension} and the definition of CSS$^*$ groups to construct elements of a CSS$^*$ group with certain properties.

\begin{lemma}[Pasting Lemma]\label{lemma:paste}
Let $\Gamma \leq \text{LS}(X)$ be a CSS group and let $\{B_1,...,B_n\}$ and $\{\widetilde{B}_1,...,\widetilde{B}_n\}$ be two partitions of $X$ such that $\text{Sim}(B_i,\widetilde{B}_i)$ contains a similarity $f_i$ for all $1 \leq i \leq n$. Then the element $f:X \to X$ defined by $f(x) = f_i(x)$ if $x \in B_i$ belongs to $\Gamma$.

\end{lemma}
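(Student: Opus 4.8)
The plan is to verify directly that the map $f$ meets both requirements for membership in $\Gamma$ as given in Definition \ref{def:CSS}: namely, that $f$ is a homeomorphism of $X$, and that $f$ is locally determined by $\text{Sim}_X$. Both parts are elementary once one exploits the clopen structure of balls in a compact ultrametric space.

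First I would check that $f$ is a homeomorphism. Since $\{B_1,\dots,B_n\}$ partitions $X$, each $x\in X$ lies in exactly one $B_i$, so $f(x)=f_i(x)$ is unambiguously defined. The map $f$ is injective because the $\widetilde{B}_i$ are pairwise disjoint and each $f_i$ is injective, and surjective because the $\widetilde{B}_i$ cover $X$ and each $f_i:B_i\to\widetilde{B}_i$ is onto; hence $f$ is a bijection, with inverse given by $f^{-1}(y)=f_i^{-1}(y)$ for $y\in\widetilde{B}_i$. For continuity, I would invoke the standard fact recalled above that in a compact ultrametric space every closed ball is also open, so each $B_i$ is clopen and $\{B_1,\dots,B_n\}$ is a finite partition of $X$ into clopen sets. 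A map that is continuous on each block of a finite clopen partition is continuous, and each $f_i$ is continuous (being a similarity); thus $f$ is continuous. The same argument applied to the clopen partition $\{\widetilde{B}_1,\dots,\widetilde{B}_n\}$ and the similarities $f_i^{-1}$ shows $f^{-1}$ is continuous, so $f$ is a homeomorphism.

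It then remains to show $f$ is locally determined by $\text{Sim}_X$. Given any $x\in X$, let $i$ be the unique index with $x\in B_i$ and take the witnessing ball to be $B'=B_i$. Then $x\in B'$, the image $f(B')=\widetilde{B}_i$ is a closed ball in $X$, and $f\vert_{B'}=f_i\in\text{Sim}(B_i,\widetilde{B}_i)=\text{Sim}(B',f(B'))$, which is exactly the defining condition. Hence $f$ is a homeomorphism of $X$ that is locally determined by $\text{Sim}_X$, and therefore $f\in\Gamma$.

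I do not anticipate a genuine obstacle here: the statement is a gluing lemma, and the only point requiring care is the bicontinuity of $f$, which reduces entirely to the observation that balls are clopen in a compact ultrametric space. Everything else is a direct unwinding of the definitions, with the local-determination condition satisfied by the tautological choice $B'=B_i$.
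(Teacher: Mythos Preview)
Your proof is correct and follows the same overall structure as the paper's: first establish that $f$ is a homeomorphism, then verify local determination by $\text{Sim}_X$. The one notable difference is in the second step. The paper, for a given $x\in B_i$, passes to a strictly smaller closed ball $\overline{B}(x,r)\subset B_i$, explicitly proves $f_i(\overline{B}(x,r))=\overline{B}(f_ix,\lambda r)$ via two inclusions, and then invokes the restriction axiom of the similarity structure to conclude $f\vert_{\overline{B}(x,r)}\in\text{Sim}(\overline{B}(x,r),\overline{B}(f_ix,\lambda r))$. Your choice $B'=B_i$ is more direct and entirely sufficient: since $f(B_i)=\widetilde{B}_i$ is already a closed ball and $f\vert_{B_i}=f_i\in\text{Sim}(B_i,\widetilde{B}_i)$ by hypothesis, the defining condition for local determination is satisfied without any further work. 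The paper's detour through a smaller ball and the restriction property is unnecessary for the stated conclusion, though it does incidentally exhibit the explicit $(r,\lambda)$ witnessing that $f\in\text{LS}(X)$.
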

\begin{proof}
First, the mapping $x \mapsto f_i^{-1}(x)$ for $x \in \widetilde{B}_{i}$, where $f_i^{-1} \in \text{Sim}_{X}(\widetilde{B}_{i}, B_i)$, is clearly the inverse $f$ and is of the same form as $f$ itself. A simple exercise shows that $f$ is continuous, and hence $f^{-1}$ is also continuous. Hence, $f$ is indeed a homeomorphism. Now, let us prove that $f$ is locally determined by $\text{Sim}_{X}$.

Let $x \in X$. Since the $B_i$ partition $X$, there exists a unique $i=1, \dots , n$ such that $x \in B_i$. Because $f_i \in \Sim(B_i, \widetilde{B}_{i})$, there exists $\lambda > 0$ such that $ f_i : B_i \to \widetilde{B}_{i}$ is a surjective $\lambda$-similarity. Because $B_i$ and $\widetilde{B}_{i}$ are closed balls containing $x$ and $f_i(x)$, respectively, we can choose $r_i , \delta_i > 0$ such that $B_i = \overline{B}(x,r_i)$ and $\widetilde{B}_{i} = \overline{B}(f_i(x), \delta_i)$. Choose $r > 0$ such that $r < r_i$ and $\lambda r < \delta_i$, which will ensure that $\overline{B}(x,r) \subseteq B_i$ and $\overline{B}(f_i(x) , \lambda r) \subseteq \widetilde{B}_{i}$. We claim that $f_i(\overline{B}(x,r)) = \overline{B}(f_i(x) , \lambda r)$. 

First, given $y \in \overline{B}(x,r)$, observe that $$d(f_i(x),f_i(y)) = \lambda d(x,y) \le \lambda r,$$ which proves that $f_i(\overline{B}(x,r)) \subseteq \overline{B}(f_i (x) , \lambda r)$. Now given $z \in \overline{B}(f_i (x) , \lambda r)$, by surjectivity of $f_i$, there exists $y \in B_i$ such that $z=f_i(y)$. We want to show more precisely that $y \in \overline{B}(x,r)$. But observe $$d(x,y) = \frac{1}{\lambda} d(f_i(x), f_i(y)) = \frac{1}{\lambda} d(f_i(x),z) \le \frac{1}{\lambda} \lambda r = r,$$
which proves that $\overline{B}(f_i(x) , \lambda r) \subseteq f_i(\overline{B}(x,r))$. 

Finally, by the restriction property of Sim, we have that $$f_i \vert_{\overline{B}(x,r)} \in \Sim(\overline{B}(x,r), f_i(\overline{B}(x,r)) = \Sim (\overline{B}(x,r), \overline{B}(f_i(x), \lambda r)).$$ But $f$ and $f_i$ equal when restricted to $\overline{B}(x,r) \subseteq B_i$, so $$f\vert_{\overline{B}(x,r)} \in \Sim (\overline{B}(x,r), \overline{B}(f(x), \lambda r)),$$ which proves that $f \in \Gamma$.

\end{proof}

The converse of the pasting lemma is also true and appeared first in the work of Farley and Hughes \cite[Proposition 3.11]{hughes2017braided}.

The following corollary is a special case of the extension property that works for finite balls. We isolate it because it sometimes is enough to prove the results we need.

\begin{corollary}\label{cor:extension}
Let $\Gamma \leq \text{LS}(X)$ be a CSS$^*$ group. Let $B$ be an arbitrary ball in $X$ and $D$ an infinite ball in $X$ disjoint from $B$. Then there exists a ball $D_0 \subseteq D$  and an involution $h \in \Gamma $ such that $h(B) = D_0$ and $h(D_0) = B$. 
\end{corollary}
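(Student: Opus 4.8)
The plan is to use condition (2) of Definition \ref{def:CSS*} to manufacture the relevant similarity between $B$ and a subball of $D$, complete this to a pair of matching partitions of $X$, and then build the involution via the Pasting Lemma (Lemma \ref{lemma:paste}).

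First I would apply condition (2) of Definition \ref{def:CSS*}. By our standing convention $B$ is a proper closed ball, and $D$ is an infinite proper closed ball, so condition (2) directly supplies a subball $D_0 \subseteq D$ together with a surjective similarity $f \in \text{Sim}(B, D_0)$. Since $D_0 \subseteq D$ and $D \cap B = \emptyset$, the balls $B$ and $D_0$ are disjoint, and by the Inverses axiom of a similarity structure $f^{-1} \in \text{Sim}(D_0, B)$.

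Next I would extend the two disjoint balls $B$ and $D_0$ to a partition of $X$ into balls. Using the tree structure of the compact ultrametric space (each ball is partitioned by its finitely many maximal proper subballs), one first refines $X$ to a partition containing $B$ as a piece; the ball $D_0$, being disjoint from $B$, then lies inside exactly one of the remaining pieces, which can be further refined so that $D_0$ also appears as a piece. This produces a partition $\mathcal{P} = \{B, D_0, C_1, \dots, C_k\}$ of $X$. I expect this partition-completion step to be the only genuine subtlety: one must invoke that in the ultrametric setting two balls are either nested or disjoint, so that $D_0$ really is contained in a single piece and no "partial overlap" can occur; everything else is formal.

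Finally I would define $h : X \to X$ by $h|_B = f$, $h|_{D_0} = f^{-1}$, and $h|_{C_i} = \text{id}_{C_i}$ for each $i$, where the last uses the Identities axiom $\text{id}_{C_i} \in \text{Sim}(C_i, C_i)$. The source collection $\{B, D_0, C_1, \dots, C_k\}$ and the target collection $\{D_0, B, C_1, \dots, C_k\}$ are the same partition $\mathcal{P}$ of $X$, and each prescribed map lies in the appropriate $\text{Sim}$-set, so Lemma \ref{lemma:paste} yields $h \in \Gamma$. By construction $h(B) = D_0$ and $h(D_0) = B$. Computing $h^2$ blockwise gives $f^{-1} \circ f = \text{id}_B$ on $B$, $f \circ f^{-1} = \text{id}_{D_0}$ on $D_0$, and the identity on each $C_i$, so $h^2 = \text{id}_X$ and $h$ is the desired involution.
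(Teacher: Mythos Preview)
Your proof is correct and follows essentially the same route as the paper: obtain $D_0$ and $f\in\Sim(B,D_0)$ from the CSS$^*$ axiom, complete $\{B,D_0\}$ to a partition of $X$ by balls, and paste $f$, $f^{-1}$, and identities via Lemma~\ref{lemma:paste}. The only cosmetic difference is that the paper phrases the partition-completion step via compactness of $X\setminus(B\cup D_0)$ rather than successive refinement, and you additionally spell out the verification that $h^2=\mathrm{id}$.
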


\begin{proof}
Since $\Gamma$ is a CSS$^*$ group, there exists $D_0 \subseteq D$, such that $\text{Sim}_{X}(B,D_0)$ is non-empty. Since $X \setminus (B \cup D_0)$ is compact, there exists closed balls $B_1, \dots B_n$ such that $\{B,D_0,B_1, \dots , B_n \}$ partition $X$. Let $f \in \text{Sim}_{X}(B,D_0)$ and, therefore, $f^{-1} \in \text{Sim}_{X}(D_0,B)$. Let $f_i \in \text{Sim}_{X}(B_i,B_i)$ be the identity. Define $h: X \to X$ by 
$$h(x) = 
\begin{cases}
f(x), & x \in B \\
f^{-1}(x), & x \in D_0 \\
f_i(x), & x \in B_i \\
\end{cases}
$$
By the pasting lemma (Lemma \ref{lemma:paste}), we know that $h$ a homeomorphism of $X$ locally determined by $\Sim_{X}$, meaning that $h \in \Gamma$, and clearly $h(B) = D_0$ and $h(D_0) = B$. 
\end{proof}

The following proposition proves that if $\Gamma$ is a CSS$^*$ group, then given some number of local similarities between infinite balls determined by the similarity structure corresponding to $\Gamma$, one can always recover an element of $\Gamma$ that agrees with these local similarities.

\begin{proposition}[Extension Property]\label{prop:extension}
    Let $\Gamma \leq \text{LS}(X)$ be a CSS$^*$ group and let $S_1 =\{B_1, \dots , B_n\}$ and $S_2 = \{D_1, \dots , D_n\}$ be two sets of infinite balls in $X$ that are respectively pairwise disjoint. Assume that neither $S_1$, $S_1$ or $S_1 \cup S_2$, is a complete partition of $X$. Suppose that there exists $h_i \in \text{Sim}(B_i,D_i)$ for all $1 \leq i \leq n$. Then there exists an element $g$ in $\Gamma$ such that $g\vert_{B_i} = h_i$ for all $1 \leq i \leq n$.
\end{proposition}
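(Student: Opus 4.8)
The plan is to reduce the statement to the Pasting Lemma (Lemma \ref{lemma:paste}). Since neither $S_1$ nor $S_2$ is a complete partition of $X$, the complements $R_B := X \setminus \bigcup_{i=1}^n B_i$ and $R_D := X \setminus \bigcup_{i=1}^n D_i$ are nonempty; each is clopen (a finite union of balls is clopen, so its complement is too), hence a finite disjoint union of closed balls. If I can produce partitions $\{A_1,\dots,A_p\}$ of $R_B$ and $\{A_1',\dots,A_p'\}$ of $R_D$ together with similarities $s_j \in \text{Sim}(A_j,A_j')$, then $\{B_1,\dots,B_n,A_1,\dots,A_p\}$ and $\{D_1,\dots,D_n,A_1',\dots,A_p'\}$ are two partitions of $X$ matched piecewise by the $h_i$ and the $s_j$, and the Pasting Lemma produces $g \in \Gamma$ with $g\vert_{B_i} = h_i$. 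So the entire problem collapses to constructing a similarity-determined bijection between the two leftover regions $R_B$ and $R_D$.

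For this matching I would run a back-and-forth/absorption argument driven by the two defining properties of a CSS$^*$ group. First I would use property (1) to fix, inside an infinite ball of each leftover region, a \emph{reservoir}: property (1) guarantees that any infinite ball contains an infinite subball whose removal leaves a clopen remainder still containing an infinite ball, so a single infinite leftover ball can be peeled finitely many times without being exhausted. Property (2) then supplies the actual similarities: given any ball $A$ on one side and the reservoir $D$ on the other, there is a subball $D_0 \subseteq D$ with $\text{Sim}(A,D_0)\neq\emptyset$, so I can carve a similar copy of $A$ out of the reservoir. Matching each non-reservoir ball of $R_B$ into the $R_D$-reservoir and, symmetrically, each non-reservoir ball of $R_D$ into the $R_B$-reservoir realizes $R_B$ as a similarity image of a clopen subset of $R_D$ and vice versa.

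The main obstacle is upgrading these two one-sided embeddings into an honest bijection that exactly exhausts both $R_B$ and $R_D$ --- a Cantor--Schr\"oder--Bernstein phenomenon in the category of similarity-determined maps between clopen sets --- rather than merely carving similar copies and leaving unmatched leftover balls. The delicate bookkeeping is to guarantee that $R_B$ and $R_D$ are genuinely similar, i.e.\ that the two leftover regions have matching ``size''; this should be forced by the fact that each $h_i$ is a homeomorphism $B_i \to D_i$ and hence preserves the relevant invariants (notably the isolated-point structure), so that removing the $B_i$ and removing the $D_i$ leaves comparable remainders in $X$. I would also treat the degenerate case where a leftover region is finite (only finite balls) separately, matching isolated points directly using that the $h_i$ preserve their number, and reserve the reservoir/absorption argument for the genuinely infinite case. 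Once the bijection is in hand, the Pasting Lemma finishes the proof.
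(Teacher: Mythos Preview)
Your reduction to the Pasting Lemma is correct, but the route you choose---matching the leftover regions $R_B$ and $R_D$ to each other by similarities---is both harder than necessary and genuinely incomplete. The paper avoids your Cantor--Schr\"oder--Bernstein obstacle entirely by never attempting to send $R_B$ onto $R_D$. When $S_1$ and $S_2$ are disjoint (no $B_i$ meets any $D_j$), the paper simply takes $g$ to be $h_i$ on $B_i$, $h_i^{-1}$ on $D_i$, and the \emph{identity} on $X\setminus(\bigcup_i B_i\cup\bigcup_j D_j)$; the two partitions fed into the Pasting Lemma are identical, so nothing needs matching. When $S_1$ and $S_2$ overlap, the paper picks an auxiliary infinite ball $A$ in the common complement $X\setminus(\bigcup_i B_i\cup\bigcup_j D_j)$, uses the CSS$^*$ property to find $\tilde A\subseteq A$ with $\text{Sim}(D_j,\tilde A)\neq\emptyset$, parks the overlapping pieces there, and again leaves a common residual region fixed by the identity. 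The whole argument is an explicit case analysis on the containment pattern among the $B_i$ and $D_j$, each case resolved by writing down a formula for $g$.

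Your approach, by contrast, asks for a similarity-determined bijection $R_B\to R_D$, and this is exactly what is not directly available. Standard Cantor--Schr\"oder--Bernstein produces a set-theoretic bijection, not one assembled piecewise from elements of $\text{Sim}$; you offer no mechanism for the upgrade beyond the hope that the $h_i$ preserve enough invariants. Your handling of isolated points is also insufficient: two singleton balls $\{x\},\{y\}$ may have $\text{Sim}(\{x\},\{y\})=\emptyset$ in a general similarity structure, so matching counts does not produce a similarity matching. Worse, $R_B$ can fail to contain any infinite ball (e.g.\ in the $\text{QAut}(\mathcal T_{2,c})$ example when $\bigcup_i B_i$ is cofinite), so you may have no reservoir on one side at all. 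The paper's trick---arranging that $g$ equals the identity on a \emph{common} residual set, for which $\text{id}\in\text{Sim}(C,C)$ is automatic---sidesteps every one of these issues and is the key idea you are missing.
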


\begin{proof}

    If $B_i \cap D_j = \emptyset$, for all $i,j$ then we can simply take a partition $\mathcal P$ of $X$ such that $S_1, S_2 \subset \mathcal P$ and use the pasting lemma to define $g$ of $\Gamma$ satisfying the proposition to be
    
    \[ g(x) = \begin{cases} h_i(x) & x \in B_i \\
    h_i^{-1}(x) & x \in D_i \\
    x & x \in X \backslash \{S_1, S_2\}
    \end{cases}.
    \]  

    Note that since $X \backslash \{S_1, S_2\}$ is still a compact ultrametric, there exists a partition of $X \backslash \{S_1, S_2\}$ into closed balls where we are simply taking the identity similarity as is done in Corollary \ref{cor:extension}. This property is used repeatedly throughout, and we do not write down partitions where the identity similarity is being used.

    When there exists $i,j$ for which $B_i \cap D_j \neq \emptyset$, the general process to construct a $g \in \Gamma$ such that $g\vert_{B_i} =h_i$, is as follows. Let $B_{i_1} \cap D_{j_1} \neq \emptyset$, $i_1,j_1 \in \{ 1, \dots , n\}$, be the first non-empty intersection. If $i_1=j_1$, proceed as in Case 1 below, otherwise assume $i_1 \neq j_1$. If $B_{i_1} \subset D_{j_1}$, choose a partition $\{B_{i_1}, \tilde B_1\dots ,\tilde B_r \}$ of $D_{j_1}$ and choose a ball $A_1$ such that $A_1$ is disjoint from all $B_i$ in $S_1$ and all $D_i$ in $S_2$. Since $\Gamma$ is a CSS$^*$ group, there exists $\tilde A_1 \subseteq A_1$ such that there is a nontrivial $f_1 \in \text{Sim}(D_{j_1},\tilde A_1)$. Then, using the pasting lemma, construct an element $g \in \Gamma$ such that $g\vert_{B_i} =h_{i}$ for all $1 \leq i \leq n$, $g\vert_{\tilde B_1} = f\vert_{\tilde B_i}$ for all $1 \leq i \leq r$, $g\vert_{D_{j_1}} = f_1\vert_{B_{i_1}}h^{-1}_{i_1}$, and $g\vert_{\tilde A_1} =h_{j_1}^{-1}f_{j_i}^{-1}$. If there are no other intersections, stop and extend $g$ to be an element on $\Gamma$, by letting $g\vert_{D_i} = h_i^{-1}$ for all $i \neq i_1,j_1$ and $g$ be the identity on $X \setminus\{S_1, S_2, \tilde A\}$. Otherwise, proceed to $B_{i_2}\cap D_{j_2} \neq \emptyset$ and repeat the procedure outlined above. Note that since multiple $B_i$ can intersect $D_{j_1}$ and multiple  $D_j$ can intersect $B_{i_1}$ we explicitly write out the specific cases that will contribute to building $g$ for an arbitrary amount and types of intersections between $S_1$ and $S_2$. We also note that there are only $n$ types of intersections possible in the construction of $g$.

    By considering the case where $S_1 = \{B_1, B_2\}$ and $S_2 = \{D_1, D_2\}$, we can exhibit all of the different ways to construct $g$ depending on how $S_1$ and $S_2$ intersect. Thus, we consider the following cases.

    \noindent\textbf{Case 1}: Suppose that $B_1 \subseteq D_1$ and $B_2 \cap D_j = \emptyset$ for all $j$.  If $B_1 = D_1$, this is is still covered by the disjoint setting with $h_i \in \text{Sim}(B_1,D_1)$ being the identity, so suppose $B_1 \subset D_1$ is a proper inclusion. Let $\{B_1, \tilde B_1, \dots \tilde B_r \}$ be a partition of $D_1$ into balls that includes $B_1$ and pick a ball $A$ disjoint from both $S_1$ and $S_2$. Since $\Gamma$ is a CSS$^*$ group, there exists a subball $\tilde A \subseteq A$ such that $\text{Sim}(D_1,\tilde A)$ is non-empty. Pick $f$ in $\text{Sim}(D_1,\tilde A)$. By the restriction property, we have that $f\vert_{B_1} \in \text{Sim}(B_1, f(B_1))$ and $f\vert_{\tilde B_i} \in \text{Sim}(\tilde B_i, f(\tilde B_i))$ for $1 \leq i \leq r$. Recall that by assumption we have $h_i \in \text{Sim}(B_i,D_i)$ for $i \in \{1,2\}$. By the composition property, we have that $f\vert_{B_1}h_1^{-1}f^{-1} \in \text{Sim}(\tilde A, f(B_1))$. Therefore, applying the pasting lemma again, we can define an element $g$ of $\Gamma$ satisfying the conclusion of the proposition as follows:

     \[ g(x) = \begin{cases} h_i(x) & x \in B_i, i\in\{1,2\} \\
     f\vert_{\tilde B_i}(x) & x \in \tilde B_i, 1 \leq i \leq r\\
    h_2^{-1}(x) & x \in D_2, \\
    f\vert_{B_1}h_1^{-1}f^{-1} (x) & x\in \tilde A\\
    x & x \in X \backslash \{S_1, S_2, \tilde A\}
    \end{cases}
    \]

     \noindent\textbf{Case 2}: Suppose that $B_1=D_2$, $B_2 \cap D_j = \emptyset$ for all $j$, and $D_1 \cap B_i = \emptyset$ for all $i$, then by the pasting lemma, an element $g$ of $\Gamma$ satisfying the conclusion of the proposition is

    \[ g = \begin{cases} h_i(x) & x \in B_i, i\in\{1,2\}  \\
    h_2^{-1}h_1^{-1}(x) & x \in D_1 \\
    x & x \in X \backslash \{S_1, S_2\}
    \end{cases}
    \]

    \noindent\textbf{Case 3:} 
    Suppose that $B_1, B_2 \subset D_1$, are proper inclusions. Take $\{B_1, B_2, \tilde B_1, \dots \tilde B_r \}$ to be a partition of $D_1$ into balls that includes $B_1$ and $B_2$. Take a ball $A$ such that $A \cap S_2 =\emptyset$ and $\tilde A \subseteq A$, the subball with $f \in$ $\text{Sim}(D_1, \tilde A)$. Following the same procedure as in the previous cases, we have that $f\vert_{B_1} \in \text{Sim}(B_1, f(B_1))$, $f\vert_{B_2} \in \text{Sim}(B_2, f(B_2))$, and $f\vert_{\tilde B_i} \in \text{Sim}(\tilde B_i, f(\tilde B_i))$ for $1 \leq i \leq r$. We also have $f\vert_{B_2}h_2^{-1} \in \text{Sim}(D_2,f(B_2))$ and $f\vert_{B_1}h_1^{-1}f^{-1} \in \text{Sim}(\tilde A, f(B_1))$. Now applying the pasting lemma produces the following element of $g$ which satisfies the conclusion of the proposition.

     \[ g(x) = \begin{cases} h_i(x) & x \in B_i, i\in\{1,2\}  \\
     f\vert_{\tilde B_i}(x) & x \in \tilde B_i, 1\leq i \leq r\\
    f\vert_{B_2}h_2^{-1}(x) & x \in D_2, \\
    f\vert_{B_1}h_1^{-1}f^{-1} (x) & x\in \tilde A\\
    x & x \in X \backslash \{S_1, S_2, \tilde A\}
    \end{cases}
    \]

    \noindent\textbf{Case 4:}
    Suppose now that $B_1 \subset D_1$ and $B_2 \subset D_2$ are proper inclusions. Take $\{B_1, \tilde B_1, \dots \tilde B_r \}$ to be a partition of $D_1$ into balls that includes $B_1$ and $\{B_2, \hat B_1, \dots \hat B_s \}$ be a partition of $D_2$ into balls that includes $B_2$. Again, pick a ball $A$ disjoint from both $S_1$ and $S_2$ and another ball $E$ such that $E \cap \{S_1, S_2, A\} = \emptyset$. Since $\Gamma$ is a CSS$^*$ group, there exists $\tilde A \subseteq A$ and $\tilde E \subseteq E$ such that $f_1 \in \text{Sim}(D_1, \tilde A)$ and $f_2 \in \text{Sim}(D_2, \tilde E)$. Following the same procedure as in the previous cases, we have that $f_1\vert_{B_1} \in \text{Sim}(B_1, f_1(B_1))$,  $f\vert_{\tilde B_i} \in \text{Sim}(\tilde B_i, f_1(\tilde B_i))$ for $1 \leq i \leq r$, $f_2\vert_{B_2} \in \text{Sim}(B_2, f_2(B_2))$, and $f_2\vert_{\hat B_i} \in \text{Sim}(\hat B_i, f_2(\hat B_i))$ for $1 \leq i \leq s$.  We also have $f_1\vert_{B_1}h_1^{-1}f_1^{-1} \in \text{Sim}(\tilde A,f_1(B_1))$ and $f_2\vert_{B_2}h_2^{-1}f_2^{-1} \in \text{Sim}(\tilde E, f_2(B_2))$. Finally, applying the pasting lemma produces the following element of $g$ which satisfies the conclusion of the proposition.  

     \[ g(x) = \begin{cases} h_i(x) & x \in B_i, i\in\{1,2\}  \\
     f_1\vert_{\tilde B_i}(x) & x \in \tilde B_i, 1 \leq i \leq r\\
      f_2\vert_{\hat B_i}(x) & x \in \hat B_i, 1 \leq i \leq s\\
    f_1\vert_{B_1}h_1^{-1}f_1^{-1} (x) & x\in \tilde A\\
    f_2\vert_{B_2}h_2^{-1}f_2^{-1} (x) & x\in \tilde E\\
    x & x \in X \backslash \{S_1, S_2, \tilde A, \tilde E\}
    \end{cases}
    \]

    \noindent\textbf{Case 5:}
    Suppose now that $B_1 \subset D_2$ and $B_2 \subset D_1$ are proper inclusions.  Take $\{B_2, \tilde B_1, \dots \tilde B_r \}$ to be a partition of $D_1$ into balls that includes $B_2$ and $\{B_1, \hat B_1, \dots \hat B_s \}$ be a partition of $D_2$ into balls that includes $B_1$. Take a ball $A$ disjoint from both $S_1$ and $S_2$ and another ball $E$ such that $E \cap \{S_1, S_2, A\} = \emptyset$. Since $\Gamma$ is a CSS$^*$ group, there exists $\tilde A \subseteq A$ and $\tilde E \subseteq E$ such that $f_1 \in \text{Sim}(D_1, \tilde A)$ and $f_2 \in \text{Sim}(D_2, \tilde E)$. Following the procedure as in the previous cases, we have that $f_1\vert_{B_2} \in \text{Sim}(B_2, f_1(B_2))$,  $f_1\vert_{\tilde B_i} \in \text{Sim}(\tilde B_i, f_1(\tilde B_i))$ for $1 \leq i \leq r$, $f_2\vert_{B_1} \in \text{Sim}(B_1, f_2(B_1))$, and $f_2\vert_{\hat B_i} \in \text{Sim}(\hat B_i, f_2(\hat B_i))$ for $1 \leq i \leq s$.  We also have $f_2\vert_{B_1}h_1^{-1}f_1^{-1} \in \text{Sim}(\tilde A,f_2(B_1))$ and $f_1\vert_{B_2}h_2^{-1}f_2^{-1} \in \text{Sim}(\tilde E, f_1(B_2))$. Finally, applying the pasting lemma produces the following element of $g$ which satisfies the conclusion of the proposition.  

    \[ g(x) = \begin{cases} h_i(x) & x \in B_i, i\in\{1,2\}  \\
     f_1\vert_{\tilde B_i}(x) & x \in \tilde B_i, 1 \leq i \leq r\\
      f_2\vert_{\hat B_i}(x) & x \in \hat B_i, 1 \leq i \leq s\\
    f_2\vert_{B_1}h_1^{-1}f_1^{-1} (x) & x\in \tilde A\\
    f_1\vert_{B_2}h_2^{-1}f_2^{-1} (x) & x\in \tilde E\\
    x & x \in X \backslash \{S_1, S_2, \tilde A, \tilde E\}
    \end{cases}
    \]

    \noindent\textbf{Case 6:}
    Suppose now that $B_1 \subset D_1$ and $D_2 \subset B_2$ are proper inclusions and take $\{B_1, \tilde B_1, \dots \tilde B_r \}$ to be a partition of $D_1$ into balls that includes $B_1$ and $\{D_2, \tilde D_1, \dots \tilde D_s \}$ be a partition of $B_2$ into balls that includes $D_2$. Again, pick a ball $A$ disjoint from both $S_1$ and $S_2$ and another ball $E$ such that $E \cap \{S_1, S_2, A\} = \emptyset$. Since $\Gamma$ is a CSS$^*$ group, there exists $\tilde A \subseteq A$ and $\tilde E \subseteq E$ such that $f_1 \in \text{Sim}(D_1, \tilde A)$ and $f_2 \in \text{Sim}(B_2, \tilde E)$. Following the same procedure as in the previous cases, we have that $f_1\vert_{B_1} \in \text{Sim}(B_1, f_1(B_1))$,  $f_1\vert_{\tilde B_i} \in \text{Sim}(\tilde B_i, f_1(\tilde B_i))$ for $1 \leq i \leq r$, $f_2\vert_{D_2} \in \text{Sim}(D_2, f_2(D_2))$, and $f_2\vert_{\tilde D_i} \in \text{Sim}(\tilde D_i, f_2(\tilde D_i))$ for $1 \leq i \leq s$.  We also have that $f_1\vert_{B_1}h_1^{-1}f_1^{-1} \in \text{Sim}(\tilde A,f_1(B_1))$ and $f_2\vert_{D_2}h_2^{-1}f_2^{-1} \in \text{Sim}(\tilde E, f_2(D_2))$. Finally, applying the pasting lemma produces the following element of $g$ which satisfies the conclusion of the proposition.  

     \[ g(x) = \begin{cases} h_i(x) & x \in B_i, i\in\{1,2\}  \\
     f_1\vert_{\tilde B_i}(x) & x \in \tilde B_i, 1 \leq i \leq r\\
        f^{-1}_2\vert_{\tilde D_i}(x) & x \in f_2(\tilde D_i), 1 \leq i \leq s\\
    f_1\vert_{B_1}h_1^{-1}f_1^{-1} (x) & x\in \tilde A\\
    f_2h_2^{-1}f_2^{-1}\vert_{D_2} (x) & x\in \tilde E\\
    x & x \in X \backslash \{S_1, S_2, \tilde A, \tilde E\}
    \end{cases}
    \]

    \noindent \textbf{Case 7:} Suppose now that $B_1 \subset D_2$ and $D_1 \subset B_2$ are proper inclusions. Then an element $g \in \Gamma$ that satisfies the proposition can be built by following a straightforward combination of Case 5 and Case 6. 
    
    In the general case of $S_1 \cap S_2 \neq \emptyset$ any number of $B_i's$ can be contained in any number of $D_j's$ and vice versa. However, following the initial general procedure, we can always construct an element $g \in \Gamma$ that satisfies the proposition by taking a finite combination of the above cases; one corresponding to each type of intersection.

\end{proof}

When a group $G$ is infinite conjugacy class (ICC), ie $\vert\{hgh^{-1}: h \in G \}\vert = \infty$ for all $g \in G \setminus \{1\}$, the corresponding group von Neumann algebra $L(G)$ is a II$_1$ factor. Thus, we show CSS$^*$ groups are ICC after proving a useful property of nontrivial elements in a CSS$^*$ group.

\begin{lemma}\label{lemma:disjointballs}
    Let $\Gamma \leq \text{LS}(X)$ be a CSS group and $f \in \Gamma \setminus \{1\}$. Then there exists a (possibly finite) ball $B \subset X$ such that $f(B) \cap B = \emptyset$. 
\end{lemma}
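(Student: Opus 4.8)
The plan is to locate a single point that $f$ moves and then zoom in on it until $f$ pushes a small ball entirely off of itself. Since $f \neq 1$, fix $x \in X$ with $f(x) \neq x$ and set $\delta := d(x,f(x)) > 0$. Because $f$ is a local similarity, there exist $r_0, \lambda > 0$ so that $f\vert_{\overline{B}(x,r_0)} \in \text{Sim}(\overline{B}(x,r_0), \overline{B}(f(x),\lambda r_0))$ is a surjective $\lambda$-similarity; in particular $f$ carries the ball $\overline{B}(x,r_0)$ onto the ball $\overline{B}(f(x),\lambda r_0)$, so images of small balls around $x$ are again balls.

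First I would shrink the radius. Choose $r > 0$ with $r \leq r_0$, $r < \delta$, and $\lambda r < \delta$ (for instance $r = \min\{ r_0, \tfrac{1}{2}\delta, \tfrac{1}{2}\delta/\lambda \}$), and set $B := \overline{B}(x,r)$. Running the radius computation from the proof of the Pasting Lemma (Lemma \ref{lemma:paste}) verbatim on the restriction $f\vert_{B}$ yields $f(B) = \overline{B}(f(x),\lambda r)$, so both $B$ and $f(B)$ are genuine closed balls of known centers and radii.

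Next I would separate the two balls using the ultrametric dichotomy: two balls are either disjoint or one contains the other, so it suffices to exclude nesting. If $f(B) \subseteq B$, then its center would lie in $B$, i.e.\ $d(x,f(x)) \leq r < \delta$, contradicting $\delta = d(x,f(x))$; symmetrically, $B \subseteq f(B)$ would force $x \in f(B)$, i.e.\ $\delta = d(x,f(x)) \leq \lambda r < \delta$, again absurd. Hence $B \cap f(B) = \emptyset$, which is the claim. Note that nothing forces $B$ to be infinite: if $x$ is isolated then $B = \{x\}$ for small $r$, which is precisely the ``possibly finite'' case in the statement, and the argument goes through unchanged.

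I do not expect a serious obstacle here; the only point requiring care is guaranteeing that $f(B)$ is honestly a ball of radius $\lambda r$ rather than merely some set containing $f(x)$, which is exactly why the radius must be kept below $r_0$ so that the restriction property of $\text{Sim}$ (equivalently, the computation in Lemma \ref{lemma:paste}) applies. One could even bypass the similarity structure entirely and argue purely topologically --- choosing disjoint balls $U \ni x$ and $V \ni f(x)$ of radius less than $\delta$ and using continuity of $f$ at $x$ to find $B \subseteq U$ with $f(B) \subseteq V$ --- but the similarity version has the bonus of exhibiting $f(B)$ explicitly as a ball, which is convenient in later applications.
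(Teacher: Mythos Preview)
Your proof is correct and follows essentially the same approach as the paper: pick a point moved by $f$, use that $f$ is locally a similarity to get a ball on which $f\vert_B \in \text{Sim}(B,f(B))$, then shrink via the restriction property until $B$ and $f(B)$ separate. The paper leaves the shrinking step to the reader in one sentence, whereas you spell out explicit radius bounds and invoke the ultrametric dichotomy, but the underlying argument is the same.
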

\begin{proof}
    Since $f \in \Gamma \setminus \{1\}$, there exists $x \in X$ such that $f(x) \neq x$. Since $f$ is a local similarity, there exists a ball $D \subset X$ such that $x \in D$ and $f\vert_D \in \text{Sim}(D,f(D))$. Using the restriction property of CSS groups and the fact that $f(x) \neq x$, we can find a ball $B \subseteq D$ such that $B \cap f(B) = \emptyset$.
\end{proof}

Given an element $f \in \Gamma$, we define the support of $f$ as the set $\supp(f) = \{x\in X : f(x) \neq x \}$, and we say it has \emph{finite support} if $f$ if $\supp(f)$ is finite. We highlight such elements now, as the conjugacy classes of elements of finite support and infinite support need to be treated separately to show CSS$^*$ groups are ICC. In Section \ref{sec:PropofComm} we prove that when $X$ contains finite balls, then the elements of finite support in $\Gamma$ form a normal subgroup.

\begin{theorem}\label{thm:icc}
Let $\Gamma \leq \text{LS}(X)$ be a CSS$^*$ group. Then $\Gamma$ is ICC.
\end{theorem}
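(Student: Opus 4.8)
The plan is to fix an arbitrary $g \in \Gamma \setminus \{1\}$ and produce an explicit sequence of pairwise distinct conjugates $g_1, g_2, \dots$, which forces the conjugacy class of $g$ to be infinite. Rather than splitting into the finite- and infinite-support cases suggested by the preceding discussion, I would aim for a single uniform construction driven by Lemma~\ref{lemma:disjointballs} and Corollary~\ref{cor:extension}. First I would invoke Lemma~\ref{lemma:disjointballs} to obtain a (possibly finite) ball $B_0$ with $g(B_0) \cap B_0 = \emptyset$, fix a point $p_0 \in B_0$, and set $q_0 := g(p_0)$; since $q_0 \in g(B_0)$ we have $q_0 \neq p_0$ and $q_0 \notin B_0$. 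The point $q_0$ will serve as a fixed \emph{reference target} against which the conjugates are distinguished.

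Next I would build the carriers for the conjugating involutions. The key observation is that $X \setminus B_0$ always contains infinitely many pairwise disjoint infinite balls: if $B_0$ is finite then $X \setminus B_0$ is clopen and cofinite, while if $B_0$ is infinite then $g(B_0) \subseteq X \setminus B_0$ is infinite; either way $X \setminus B_0$ contains an infinite ball, and repeatedly applying condition (1) of Definition~\ref{def:CSS*} splits off a sequence $D_1, D_2, \dots$ of pairwise disjoint infinite balls inside $X \setminus B_0$. Discarding the at most one $D_n$ containing $q_0$, I may assume every $D_n$ is disjoint from $B_0$ and avoids $q_0$. For each $n$, Corollary~\ref{cor:extension} applied to $B_0$ and the infinite ball $D_n$ yields a subball $D_n^0 \subseteq D_n$ and an involution $t_n \in \Gamma$ that swaps $B_0$ and $D_n^0$ and is the identity elsewhere; in particular $\supp(t_n) \subseteq B_0 \cup D_n^0$. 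Put $g_n := t_n g t_n^{-1} = t_n g t_n$.

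Finally I would distinguish the $g_n$ using the property $P_n(f)\colon$ ``$f(y) = q_0$ for some $y \in D_n$''. On one hand $g_n$ maps $t_n(p_0) \in D_n^0 \subseteq D_n$ to $t_n g t_n(t_n p_0) = t_n g(p_0) = t_n(q_0) = q_0$, using $q_0 \notin \supp(t_n)$, so $P_n(g_n)$ holds. On the other hand, for $m \neq n$ and any $y \in D_n$ we have $t_m(y) = y$ because $D_n$ is disjoint from $\supp(t_m) \subseteq B_0 \cup D_m$, so $g_m(y) = t_m(g(y))$; were this equal to $q_0$ we would get $g(y) = t_m(q_0) = q_0 = g(p_0)$, forcing $y = p_0 \in B_0$ and contradicting $y \in D_n \subseteq X \setminus B_0$. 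Hence $P_n(g_m)$ fails, so the conjugates $g_1, g_2, \dots$ are pairwise distinct and $\Gamma$ is ICC.

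The step I expect to be the main obstacle is arranging the bookkeeping so that a \emph{single} reference point $q_0$ detects all the conjugates irrespective of the global dynamics of $g$. The two delicate points are (i) guaranteeing a supply of infinitely many disjoint infinite balls in the complement of $B_0$ even when $B_0$ is infinite, which is handled via $g(B_0) \subseteq X \setminus B_0$ together with condition (1), and (ii) keeping each $\supp(t_n)$ confined to $B_0 \cup D_n^0$ so that the evaluations of $g_n$ and $g_m$ on $D_n$ decouple. If one instead follows the finite/infinite-support dichotomy, the finite-support case can be treated by relocating $\supp(g)$ into the disjoint balls $D_n$ via the Extension Property and distinguishing conjugates by their supports, whereas the infinite-support case would require first producing a displaced \emph{infinite} ball and then running an injectivity argument on its rigid stabilizer; the uniform argument above is preferable precisely because it sidesteps having to separately establish the existence of an infinite displaced ball.
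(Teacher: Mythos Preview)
Your proof is correct. The underlying mechanism is the same as the paper's---build involutions $t_n$ carrying a fixed point $p_0$ to distinct locations, then observe that the conjugates $t_n g t_n$ all send distinct points to the common value $q_0 = g(p_0)$ and invoke injectivity of $g$---but you organize it more cleanly. The paper splits into two cases (infinite support versus finite support), in the first case choosing \emph{nested} infinite balls $D_1 \supset D_2 \supset \cdots$ inside $g(B)$ while carefully avoiding the accumulating points $y_n = h_n x$, and in the second case choosing disjoint finite target balls outside $\supp(g)$. Your uniform construction via pairwise disjoint infinite balls $D_n \subset X \setminus B_0$, obtained by iterating condition (1) of Definition~\ref{def:CSS*} inside a single infinite ball of $X \setminus B_0$, handles both cases at once and eliminates the running bookkeeping of the nested construction. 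The price is the small extra step of verifying that $X \setminus B_0$ contains an infinite ball regardless of whether $B_0$ is finite (use that $X$ is infinite and the complement of a ball is a finite union of balls) or infinite (use that $g(B_0)$ is a ball, which does follow from the proof of Lemma~\ref{lemma:disjointballs} since $g\vert_{B_0}$ lies in $\text{Sim}(B_0,g(B_0))$); you handle this correctly. Your distinguishing property $P_n$ is also slightly sharper than the paper's, since you verify not only that $g_n^{-1}(q_0)$ lies in $D_n$ but that $g_m^{-1}(q_0)$ does \emph{not}, which makes the pairwise distinctness immediate.
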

\begin{proof}

Let $f \in \Gamma \setminus \{1\}$ and assume that $f$ does not have finite support. Then there exists $x \in X$ such that $f(x) \neq x$. Following the proof of Lemma \ref{lemma:disjointballs}, there exists disjoint closed balls $B$ and $D:= f(B)$ containing $x$ and $f(x)$, respectively. Moreover, we can assume that $B$ and $D$ are both infinite by choosing a different $x$ to start with if necessary (we treat the case where $f$ has finite support separately). Choose an infinite ball $D_1$ inside $D$ not containing $f(x)$, and choose $h_1 \in \text{Sim}(B,D_1)$ (replacing $D_1$ with a subball if necessary). Extend this to all of $X$ in the manner outlined in the proof Corollary \ref{cor:extension} to obtain an element of $\Gamma$, and without ambiguity we use the same symbol. Note that $h_1^2 = 1$. Define $y_1 := h_1x$. Now, choose an infinite ball $D_2$ in $D_1$ not containing $y_1$. Let $h_2 \in \text{Sim}(B,D_2)$ (again replacing $D_2$ with a subball if necessary), extending it in the same way to all of $X$ with the property that $h_2^2 = 1$, and define $y_2 := h_2x$. 

Repeating this process indefinitely, we can construct a sequence  of elements $\{h_n\}_{n \in \mathbb{N}}$ in $\Gamma \setminus \{1\}$ such that all the points defined by $y_n := h_nx$ are all distinct. Observe that $$h_n^{-1} f h_n(y_n) = h_nfh_n^2(x) = h_n f(x) = f(x),$$ where the last equality holds because $h_n$ acts by identity on $D \setminus D_n$. Hence, for $n$ and $m$ distinct, because $h_n f h_n$ and $h_m f h_m$ map distinct points $y_n$ and $y_m$ to the same point $f(x)$, they cannot possibly be equal as they are bijections. This shows that all the conjugates in $\{h_n^{-1} f h_n\}_{n \in \mathbb{N}}$ are distinct. Hence, any nontrivial element of $\Gamma$ with non-finite support has infinitely many conjugates.

Suppose $f \in \Gamma \setminus \{1\}$ has finite support. Then there exists some $x \in X$ such that $f(x) \neq x$. Following the proof of Lemma \ref{lemma:disjointballs} there exists a finite ball $B \subset X$ such that $x \in B$ and $B \cap f(B) = \emptyset$. Since $X$ is assumed to be infinite, we can choose an infinite ball $D_1 \subset X$ such that $D_1 \cap (B \cup f(B)) = \emptyset$. By the CSS$^*$ property, there exist some $B_1 \subset D_1$ and some $h_1 \in \text{Sim}(B, B_1)$. Then we can construct the involution $h_1$ (and rename it $h_1$) as before. Now pick an infinite ball $D_2 \subset D_1$ such that $D_2 \cap B_1 = \emptyset$ (note this is always possible because $D_1 \setminus B_1$ is an infinite set). Moreover, $D_2 \cap (f(B) \cup B \cup B_1) = \emptyset$. Now there exists a finite ball $B_2 \subset D_2$, some $h_2 \in \text{Sim}(B, B_2)$, and again $h_2$ can be taken as the corresponding involution in $\Gamma$. Continuing this process, we construct $h_n$ by picking an infinite ball $D_n \subset D_{n-1}$ such that $D_n \cap B_{n-1} = \emptyset$, and subsequently, $D_n \cap (f(B) \cup B \cup B_1 \cup \dots \cup B_{n-1}) = \emptyset$. We then arrive at the same conclusion as before; that $\{h_n^{-1} f h_n\}_{n \in \mathbb{N}}$ are distinct. Therefore, we can conclude $\Gamma$ is ICC.
\end{proof}

The following corollary is needed in Section \ref{sec:prime} when proving $L(\Gamma)$ is prime for $\Gamma$ a FSS$^*$ group.

\begin{corollary}\label{cor:iccsubgroup}
   Let $\Gamma \leq \text{LS}(X)$ be a CSS$^*$ group and let $U$ be any closed set properly contained in $X$. Define $\Gamma_U = \{ g \in \Gamma : g\vert_U = \operatorname{id}\}$ to be the stabilizer subgroup of $U$. Then $\Gamma_U$ is an ICC subgroup of $\Gamma$.
\end{corollary}

\begin{proof}
    First of all, it is straightforward that $\Gamma_U$ is a subgroup of $\Gamma$. Now assume $f \in \Gamma_U$ is a nontrivial element. Then there exists $x \in X \backslash U$ such that $f(x) \neq x$ and $f(x) \not \in U$. Following the proof of Lemma \ref{lemma:disjointballs}, there exists disjoint balls $A$ and $D$ containing $x$ and $f(x)$, respectively. If necessary, adjust $A$ and $D$ so that $A \cap U = \emptyset$ and $D \cap U = \emptyset$. In the case where $f$ does not have finite support, we define $h_1 \in \text{Sim}(A,D)$ as in the proof of Theorem \ref{thm:icc} (replacing $D$ with a subball if necessary). Note that $h_1 \in \Gamma_U$. The sequence $\{h_n\}_{n \in \mathbb{N}} \subset \Gamma_U \backslash \{1\}$ can then be constructed as in the proof of Theorem \ref{thm:icc}.
    
    When $f$ does have finite support, we can construct $h_n$ as in the proof of Theorem \ref{thm:icc} with the added assumption that $D_1 \cap U = \emptyset$. It follows that $\Gamma_U$ is ICC.
\end{proof}

\section{Examples of \texorpdfstring{FSS$^*$}{FSS*} and \texorpdfstring{CSS$^*$}{CSS*} Groups}\label{sec:examples}

In this section, we introduce examples of FSS$^*$ and CSS$^*$ groups as well as natural FSS groups that are not FSS$^*$. Apart from the groups arising from irreducible subshifts of finite type, all of these examples were first identified as FSS groups in the work of Farley and Hughes \cite{hughes2009}, \cite{hughes2017braided}.

\subsection{From d-ary trees}

For each non-empty finite set $A$, or \textit{alphabet} as we will call it, one can associate an infinite $d$-ary tree where $d = \vert A \vert$. We call any ordered sequence of elements of $A$ a \textit{word}. Let $A^*$ be the set of all words in the alphabet $A$ and let $A^\omega$ be the subset of $A^*$ of all infinite words. Let $\mathcal T_A$ be the tree associated to $A$, where the vertices of $\mathcal T_A$ are all the finite words in $A$. Notice that two words are connected by an edge if one of the words contains the other. More precisely, if $v$ and $w$ are words in $A$, then there is an edge $\mathcal T_A$ connecting $v$ and $w$ if and only if $w=vx$ or $v=wx$ for some other word $x$ in $A$. Letting the empty word be the root, we can see $A^\omega =$ end$(\mathcal T_A, \emptyset)$ is a compact ultrametric space under the metric described in Example \ref{ex:treemetric}. The induced total order on $A^\omega$ is the usual lexicographic order. 
Metric balls in $A^\omega$ are of the form $wA^\omega$, where $w$ is a word in $A^*$. Balls can be thought of as sets of words that all have the same prefix up to a certain length. 

More generally, under this same metric one can define compact ultrametric spaces from the end space of $\mathbb R$-trees. In
fact, Hughes proved in \cite{hughes2004treescategorical} that there is a categorical equivalence between complete ultrametric spaces and the end
space of geodesically complete $\mathbb R$-trees. An $\mathbb R$-\textit{tree} is a metric space $(T,d)$ where any two points $x,y \in T$ are uniquely arcwise connected, meaning there exists a unique path from $x$ to $y$ isometric to the subinterval $[0,d(x,y)]$ of $\mathbb R$. A rooted tree $(T,v)$ is \textit{geodesically complete}, if every vertex, except possibly the root, lies in at least two edges. More precisely, every isometric embedding $x:[0,t] \rightarrow T$, $t \geq0$ such that $x(0)=v$ extends to an isometric embedding $\tilde x:[0,\infty) \rightarrow T$. In other words, there are no dead ends in the tree. See \cite[Section 5]{hughes2012treesnoncomm} for more on the discussion of $\mathbb R$-trees in this context. 

\begin{example}[Higman-Thompson Groups $V_d$]\label{ex:ThompsonGroups}
Let $A$ be an alphabet with $\vert A\vert =d$, and let $A^\omega$, as defined above, be our compact ultrametric space. Consider $\text{LS}_{\text{lop}}(A^\omega) \leq \text{LS}(A^\omega)$ to be the group of all local order-preserving similarities. That is, $h \in \text{LS}_{\text{lop}}(A^\omega)$ if for every $x \in A^\omega$, there exists a ball $B \subset A^\omega$ such that $h\vert_B$ is an order preserving similarity from $B$ to $h(B)$.
Notice that every ball in $A^\omega$ is of the form $wA^\omega$ for some word $w \in A^*$, and moreover, $wA^\omega = \text{end}(\mathcal T_A,w)$. In particular, $(\mathcal T_A,w)$ and $(\mathcal T_A,\emptyset)$ are homeomorphic trees, and thus, there is a unique order-preserving similarity from any ball onto $A^\omega$. Therefore, between any two balls in $A^\omega$ there exists a unique order-preserving similarity. Thus, we define the similarity structure  $\text{Sim}_{A^\omega}$ such that $g \in \text{Sim}(wA^\omega,vA^\omega)$ if and only if $g$ is the unique order-preserving similarity $g:wA^\omega \rightarrow vA^\omega$. This similarity structure exactly produces $\text{LS}_{\text{lop}}(A^\omega)$ as the corresponding FSS group and is isomorphic to the Higman-Thompson group $V_d$. 
    
\end{example}

\begin{observation}\label{cor:VisFSS*}
    The Higman-Thompson group $V_d$ is FSS$^*$ group.
\end{observation}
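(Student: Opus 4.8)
The plan is to verify the two defining conditions of an FSS$^*$ group from Definition \ref{def:CSS*} directly against the explicit model of $V_d$ furnished in Example \ref{ex:ThompsonGroups}, where $X = A^\omega$ with $|A| = d \ge 2$, balls are exactly the sets $wA^\omega$ for $w \in A^*$, and $\text{Sim}(wA^\omega, vA^\omega)$ is the singleton consisting of the unique order-preserving similarity between the two balls. Since $V_d$ is already an FSS group by that example, it remains only to check the homogeneity conditions (1) and (2).

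First I would record the basic observation that, because $d \ge 2$, the space $A^\omega$ is infinite and every ball $wA^\omega$ is itself homeomorphic to $A^\omega$ (via the unique order-preserving similarity), hence infinite; in particular $A^\omega$ has no finite balls. For condition (1), given an infinite ball $wA^\omega$, its maximal proper subballs are precisely the sets $waA^\omega$ for $a \in A$, which are pairwise disjoint and each infinite. As $d \ge 2$, there are at least two of them, say $wa_1A^\omega$ and $wa_2A^\omega$, giving the required pair of disjoint infinite subballs.

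For condition (2), let $B = wA^\omega$ be any proper closed ball and $D = vA^\omega$ any infinite proper closed ball. Because the similarity structure $\text{Sim}_{A^\omega}$ places the unique order-preserving similarity between \emph{every} pair of balls, the set $\text{Sim}(B, D)$ is already non-empty, so one may simply take $D_0 = D$. This verifies condition (2) without any further construction. Together with condition (1), this shows $V_d$ is an FSS$^*$ group.

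I expect no serious obstacle here: the result is essentially immediate once the explicit description of balls and of the similarity structure on $A^\omega$ is in hand, and the only point requiring a word of care is the standing assumption $d \ge 2$, which guarantees both that $A^\omega$ is infinite (so the hypotheses of the conditions are non-vacuous) and that each ball splits into at least two disjoint infinite subballs needed for condition (1).
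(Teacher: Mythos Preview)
Your proposal is correct and follows essentially the same approach as the paper's proof: both verify condition (1) by taking $wa_1A^\omega$ and $wa_2A^\omega$ for distinct $a_1,a_2 \in A$ as disjoint infinite subballs, and both verify condition (2) by observing that $\text{Sim}(wA^\omega,vA^\omega)$ is non-empty for any two balls, so one may take $D_0 = D$ directly.
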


\begin{proof}
    This follows immediately from the previous example. Indeed, every ball in $V_d$ is infinite and contains at least two disjoint balls (in fact they contain arbitrarily many). For instance, if $u_1$ and $u_2$ are distinct in $A$, and $wA^\omega$ is a ball in $A^\omega$, then $wu_1A^\omega$ and $wu_2A^\omega$ are two disjoint balls in $wA^\omega$. Since any two balls in $A^\omega$ have a surjective similarity between them $\text{Sim}(wA^\omega,vA^\omega)$ is non-empty for any two balls $wA^\omega$ and $vA^\omega$ in $A^\omega$. Therefore, $V_d \cong \text{LS}_{\text{lop}}(A^\omega)$ is an FSS$^*$ group. 
\end{proof}

\begin{example}[Higman-Thompson groups $V_{d,r}$ and Generalizations of $\text{LS}_{\text{lop}}(X)$]\label{ex:Vdr} 
For any rooted, ordered, proper $\mathbb R$-tree $(T,v)$, we can consider the compact ultrametric space $X = \text{end}(T,v)$ and the group $\text{LS}_{\text{lop}}(X) \leq \text{LS}(X)$. The similarity structure is not as uniform as in the case where $X=A^\omega$. The balls in $X$ are of the form $\text{end}(T, w)$ for some vertex $w$ on $(T,v)$. However, there only exists an order-preserving similarity between two balls $\text{end}(T, w_1)$ and $\text{end}(T, w_2)$ if $(T, w_1)$ and $(T, w_2)$ are homeomorphic as trees. This is no longer a given for a general ordered, proper $\mathbb R$-tree, like it is for the $d$-ary tree $(\mathcal T_A, \emptyset)$. Thus, whenever $(T, w_1)$ and $(T, w_2)$ are not isomorphic, we set $\text{Sim}(\text{end}(T, w_1),\text{end}(T, w_2))$ to be empty, and when they are isomorphic we take it to be the unique order-preserving similarity between the two balls. With that adjustment made, $\text{LS}_{\text{lop}}(X)$ is an FSS group, as in the previous example. 

For a particular example, we see that the Higman-Thompson groups $V_{d,r}$ for $d$ and $r$ positive integers arise as the group of local order preserving similarities for a rooted, ordered, proper $\mathbb R$-tree $(T,v)$. The tree can be described as follows. Fix a root $v$ and connect $r$ edges to it. Then, for the $r$ vertices at the bottom of the $r$ edges, attach a $d$-ary tree and define $(T,v)$ to be the resulting tree. It follows that $V_{d,r}$ is an FSS group in  $\text{LS}_{\text{lop}}(\text{end}(T,v))$. We can also conclude $V_{d,r}$ is an FSS$^*$ group for the same reason $V_d$ is, given that every subtree of $(T,v)$ is a $d$-ary tree. 

In general, the branching pattern of a rooted, ordered, proper $\mathbb R$-tree can be quite complicated, so $\text{LS}_{\text{lop}}(X)$ should realize a vast variety of groups depending on the tree you start with. 
    
\end{example}

\begin{remark}\label{rem:lfdg}
    In \cite{hughes2009}, Hughes actually studied a larger class of groups than FSS groups, which he called ``locally finitely determined groups of local similarities on a compact ultrametric space.'' In particular, such a group can be understood as any subgroup of $\Gamma(\text{Sim}_X)$. What this means is that the Higman-Thompson groups $F_d$ and $T_d$ are also locally finitely determined groups of local similarities but are not FSS groups because they appear as subgroups of $V_d = \text{LS}_{\text{lop}}(A^\omega)$ under the same similarity structure. In \cite{de2023mcduff} de Santiago, Khan, and the second named author proved that $L(T_d)$ is prime and $T_d$ is properly proximal (it was already known to be non-inner amenable by the first named author and Zaremsky \cite{bashwinger2022non}). Thus, presumably, any non-inner amenable subgroup of a CSS$^*$ group where the orbits of $\Gamma \curvearrowright \mathcal E$ are still infinite (see Section \ref{sec:cocycle}) should satisfy the main results of this paper. However, the pasting lemma and extension property (Proposition \ref{prop:extension}) which are essential tools throughout this paper, do not apply to $T_d$. In particular, $T_d$ is the subgroup of $V_d$, where the elements are additionally cyclic order-preserving. Therefore, it is much more difficult to extend more than one local similarity of $A^\omega$ to be an element of $T_d$ and not just $V_d$.
\end{remark}

\begin{example}[R\"over-Nekrashevych groups $V_d(G)$]\label{ex:RNgroups}

The R\"over-Nekrashevych groups $V_d(G)$ were first defined and studied by Scott in \cite{scott1984construction}. Later, R\"over studied $V_d(G)$, where $G$ is the Grigorchuck group, in \cite{rover1999constructing}. The groups were popularized by Nekrashevych in \cite{nekrashevych2004cuntz} where $G$ was described as a self-similar group. The names Nekrashevych, Nekrashevych-R\"over, and Scott-R\"over-Nekrashevych groups are all used for $V_d(G)$, and sometimes only refer to when $G$ is finite. We now explain how $V_d(G)$ arises as a CSS group, when $G$ is countable, which is not how they were originally defined. Let $A$ be a finite alphabet of size $d$, $\mathcal T_A$ the corresponding $d$-ary tree, and $G \le \text{Homeo}(A^\omega)$ a countable self-similar group, where $A^\omega := \text{end}(\mathcal{T}_A, \emptyset)$. We can define a similarity structure that produces the classic R\"over-Nekrashevych group $V_d(G) \leq$ LS$(A^\omega)$. To describe the similarity structure, notice that for any surjective similarity $h:B \rightarrow D$ between any two closed balls in $A^\omega$, there is an induced isometry $h_*$ of $A^\omega$ given by 
\[h_*: A^\omega \rightarrow B \xrightarrow h D \rightarrow A^\omega \]
because $A^\omega$ is a homogeneous space. Now since $B=wA^\omega$ for some $w \in A^*$, it can be viewed as a self-similar subtree of $A^\omega$, and any $h \in$ Aut$(\mathcal T_A)$ induces a surjective similarity between $B$ and $D$ . Therefore, it follows that an element $g \in $ LS$(A^\omega)$ is in $V_d(G)$ if and only if for each $x \in A^\omega$ there exists $\epsilon, \lambda$ such that $g\vert :B(x,\epsilon) \rightarrow B(g(x), \lambda\epsilon)$ is a $\lambda$-similarity and $(g\vert )_*$ is in $G$. The similarity structure then can be described as follows. If $B$ and $D$ are closed balls of $A^\omega$, then  $\text{Sim}(B,D)$ is all surjective similarities $h: B \rightarrow D$ such that $h_* \in G$. It follows that $V_d(G)$ is the CSS group corresponding to this similarity structure, where the Restriction Property follows from $G$ being a self-similar group. 

In \cite[Example 4.4]{hughes2009}, Hughes requires $G$ to be finite to ensure that  $\text{Sim}(B,D)$ is always finite, thus getting an FSS group. We however, only require $G$ to be countable and instead get a CSS group, given that  $\text{Sim}(B,D)$ is now countable and not finite. In fact, the countable R\"over-Nekrashevych groups were the motivation for us to define CSS groups. 

We also note that if $G = \{1\}$ we recover $V_d$ from this similarity structure as well. In particular, every self-similar subgroup $G$ of Homeo$(\mathcal T_A)$ determines a different similarity structure on $A^\omega = \text{end}(\mathcal T_A,\emptyset)$, so a CSS group is not unique to the compact ultrametric space, but instead to the choice of similarity structure. Given that $V_d$ is a subgroup of every R\"over-Nekrashevych group $V_d(G)$ it follows from Corollary \ref{cor:VisFSS*} that $V_d(G)$ is an FSS$^*$ group when $G$ is finite and a CSS$^*$ group when $G$ is countable.

Part of the motivation for Hughes to define FSS groups instead of CSS groups in \cite{hughes2009} was to prove that all FSS groups have the Haagerup property. When moving to the CSS setting, Hughes result does not hold. Indeed, $\text{SL}_{n}(\mathbb{Z})$ has property (T) for $n \ge 3$, and $\mathbb{Z}^n \rtimes \text{GL}_n(\mathbb{Z})$ is an infinite self-similar group (see \cite{Scott1984}) containing $\text{SL}_n(\mathbb{Z})$, meaning $V_d(\mathbb{Z}^n \rtimes \text{GL}_n(\mathbb{Z}))$ cannot have the Haagerup property. 

We also remark that we can only prove $V_d(G)$ is properly proximal for $G$ finite (see Corollary \ref{cor:properproximal}). Indeed, when $G$ is infinite, the proof of \cite[Propostion 1.6]{boutonnet2021properly} no longer holds, when the existence of a proper cocycle is relaxed to unbounded. However, it is still possible $V_d(G)$ might be properly proximal in the $G$ is infinite case for other reasons.

\end{example}

\subsection{Subshifts of Finite Type}\label{sec:SFT}

In this section, we describe how one can define an FSS group arising from the compact ultrametric spaces that -- together with the shift transformation --- define the dynamical systems known as a subshift of finite type. We start by recalling the definition of a subshift of finite type and direct the reader to \cite{kitchens1998} for an excellent introduction to the subject.  

Consider a finite alphabet $A$ of size $n$ and $A^\omega$ the set of all infinite words in $A$. If $A$ is represented by the symbols $\{0, 1, \dots, n-1 \}$, then we can associate the end space of the regular $n$-ary tree with all infinite sequences on $A$. This is the same space as the one-side sequence space $X_n=\{0, 1, \dots, n-1 \}^{\mathbb N}$ from symbolic dynamics. Equipping $X_n$ with the shift transformation $\sigma:X_n \rightarrow X_n$, given by $\sigma(x_i)_{i \in \mathbb N} = (x_{i+1})_{i \in \mathbb N}$, we recover the \textit{one-sided shift on $n$ symbols} or the \textit{full one-sided $n$-shift}. Since $X_n$ is homeomorphic to $A^\omega$, we endow it with the same metric from Example \ref{ex:treemetric}. That is, for two points $x= (x_i)_{i \in \mathbb N}$ and $y= (y_i)_{i \in \mathbb N}$ in $X_n$ the metric is defined by 

\[ d(x,y) = \begin{cases}
    e^{-k} & \text{if} \: x_i =y_i \: \text{for} \: 1 \leq i <k \:  \text{and} \: x_k \neq y_k \\
    0 & \text{if} \: x =y
\end{cases}.\]

We now define the subshifts of finite type. Let $B$ be an $n \times n$ matrix indexed by $A=\{ 0, \dots n-1\}$ with entry values either $0$ or $1$. Now, define a sequence $(x_i)_{i\in \mathbb N}$ by the rule that $x_i \in A$ for all $i \geq 0$ and $B_{x_ix_{i+1}}=1$, meaning the sequence is completely determined by the matrix $B$. We can also view $B$ as an adjacency matrix of a direct graph $(G_B,V)$, where the vertices $V$ agree with the symbols $A$. If every directed path on $G_B$ is infinite, then we say the corresponding subspace $X_B$ of $X_n$ is a \textit{one-side subshift of finite type}. The matrix $B$ is usually called the transition matrix for $X_B$ and since $X_B$ is a closed subspace of $X_n$ it is also a compact ultrametric space under the same metric. Viewing $X_B$, as a subspace of $\text{end}(\mathcal T_A,\emptyset)$, we can see it corresponds to the end space of a subtree of $(\mathcal T_A, \emptyset)$ of starting from the same root. Call this subtree $(\mathcal T_B, \emptyset)$. 

To define the similarity structure on $X_B$, note that $\mathcal T_B$ still has the lexicographic order inherited from $\mathcal T_A$, which induces an order on $X_B$. Therefore, every vertex in $\mathcal T_B$ corresponds to a word which is a finite prefix to an infinite sequence in $X_B$. We call this set of words $A^*_B$.  Every ball in $X_B$ is of the form $wX_B$, with $w \in A^*_B$. For any symbol $u \in A$, we define its \textit{follower set} as the set of symbols that are allowed to follow $u$ according to the transition matrix B. Now, given two balls $wX_B$ and $vX_B$, with $\vert w \vert =n$ and $\vert v \vert = m$, write $w=w'u$ and $v=v'u'$, where $w',v' \in A^*_B$ with $\vert w' \vert = n-1$, $\vert v' \vert = m-1$ and $u, u'$ symbols in $A$. If $u$ and $u'$ have the same follower set, then the subtrees of $(\mathcal T_B,\emptyset)$, $(\mathcal T_B, w)$ and $(\mathcal T_B,v)$, are isomorphic and there is a unique order-preserving isomorphism $h: (\mathcal T_B, w) \rightarrow (\mathcal T_B, v)$. Now any element of a ball $wX_B$ can be written as $wx$ where $x \in X_B$, so we define $\hat h : wX_B \rightarrow vX_B$ by $\hat h (wx) = vh(x)$, which is a surjective similarity. We then set $\text{Sim}(wX_B,vX_B) = \{ \hat h \}$. If we instead consider two balls $wX_B$ and $vX_B$, again with $w=w'u$ and $v=v'u'$, but $u$ and $u'$ do not have the same follower set, then we set $\text{Sim}(wX_B,vX_B) = \emptyset$. This determines a similarity structure $\text{Sim}_{X_B}$ on $X_B$ and we can consider the corresponding FSS group $\Gamma :=\Gamma(\text{Sim}_{X_B}) \leq \text{LS}(X_B)$.

Now that we have an FSS group $\Gamma \leq \text{LS}(X_B)$, the goal is to understand when $\Gamma$ will be an FSS$^*$ groups. The obvious place to look is the well-studied irreducible subshift of finite type. We say the transition matrix $B$ is \textit{irreducible} if for all $1 \leq i,j \leq n$, $B^\ell_{ij} >0$ for some positive integer $\ell$. Where,  $B^\ell_{ij}$ is the $i,j$-th entry of the $\ell$-th matrix power of $B$. The turns out to be equivalent to the directed graph $G_B$ being \textit{strongly connected}, meaning, it is possible to travel between any two vertex by a sequence of direct edges in $G_B$.

It is a well-known fact that an irreducible subshift of finite type $X_B$, defined on a finite alphabet, is either one periodic orbit (and hence finite) or homeomorphic to the Cantor set.  Therefore, to rule out the periodic case, it suffices to consider irreducible subshifts of finite type where at least one symbol has two other symbols that can immediately follow it. We call such a symbol a \textit{2-followed symbol}. 

\begin{proposition}\label{prop:SFTareFSS*}
     Let $X_B$ be an irreducible one-sided subshift of finite type on a finite set of symbols $A$. Assume there exists at least one 2-followed symbol. Let $\text{Sim}_{X_B}$ be the similarity structure defined above. Then $\Gamma(\text{Sim}_{X_B}) \leq \text{LS}(X_B)$ is an FSS$^*$ group. 
\end{proposition}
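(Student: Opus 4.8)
The plan is to verify the two defining conditions of a FSS$^*$ group (Definition \ref{def:CSS*}) for $\Gamma(\text{Sim}_{X_B})$. Since we already know $X_B$ is homeomorphic to the Cantor set (having ruled out the periodic case via the 2-followed symbol hypothesis), every ball in $X_B$ is infinite, so the phrase ``infinite ball'' just means ``ball.'' The first condition --- that every ball has at least two disjoint infinite subballs --- should follow from irreducibility together with the 2-followed symbol assumption, and the second condition --- that for any ball $B$ and any ball $D$ there is a subball $D_0 \subseteq D$ with $\text{Sim}(B, D_0) \neq \emptyset$ --- is the crux and will rest on a combinatorial argument about follower sets.

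\textbf{Condition (1).} First I would show that every ball $wX_B$ splits into at least two disjoint subballs. A ball $wX_B$ with $w = w'u$ has its immediate subballs indexed by the follower set of the terminal symbol $u$; this subball is nonempty precisely because $X_B$ is strongly connected, so every symbol has a nonempty follower set and every finite admissible word extends to an infinite admissible sequence. To guarantee \emph{two} disjoint subballs, I would use irreducibility to show that from the symbol $u$ one can reach the 2-followed symbol $s$ (say after $\ell$ steps), so that the ball $wX_B$ contains a deeper subball of the form $wzX_B$ whose terminal symbol is $s$; since $s$ has at least two followers, $wzX_B$ splits into (at least) two disjoint subballs, and hence so does $wX_B$. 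This establishes condition (1).

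\textbf{Condition (2).} This is the main obstacle, because $\text{Sim}(B, D_0)$ is nonempty only when the terminal symbols of the defining words have the \emph{same follower set}, and follower sets are rigid data attached to the transition matrix. The key idea is that, by the definition of the similarity structure, $\text{Sim}(wX_B, vX_B) \neq \emptyset$ as soon as the terminal symbols of $w$ and $v$ agree in follower set. So given an arbitrary ball $B = wX_B$ with terminal symbol $u$, and any target ball $D = vX_B$, I would use irreducibility to travel \emph{inside} $D$: there is an admissible path from the terminal symbol of $v$ to the symbol $u$, producing an admissible extension $vz$ (with $vz$ a finite word) whose terminal symbol is exactly $u$. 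Then $D_0 := vzX_B \subseteq D$ is a subball whose terminal symbol $u$ has (trivially) the same follower set as the terminal symbol of $w$, so the unique order-preserving similarity lies in $\text{Sim}(B, D_0)$, making it nonempty. The care needed here is to confirm that strong connectivity gives a path \emph{ending} at $u$ and that the resulting word $vz$ is genuinely admissible (each consecutive pair allowed by $B$), which is exactly what $B^\ell_{ij} > 0$ encodes.

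Having verified both conditions, I would conclude that $\Gamma(\text{Sim}_{X_B})$ is a FSS$^*$ group (it is already an FSS group by the construction preceding the proposition, and finiteness of each $\text{Sim}$ set is inherited since each is a singleton or empty). The only genuinely delicate point is making the follower-set matching precise in condition (2): one must match the terminal symbol exactly rather than merely matching follower sets abstractly, and I expect the cleanest route is to reach the \emph{same} terminal symbol $u$ via irreducibility, for which follower sets automatically coincide.
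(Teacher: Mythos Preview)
Your proposal is correct and follows essentially the same approach as the paper: both arguments use strong connectivity of $G_B$ to reach the 2-followed symbol inside an arbitrary ball for condition (1), and to extend the word $v$ to one ending in the same terminal symbol $u$ as $w$ for condition (2). The paper's proof is slightly terser but the logical structure is identical.
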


\begin{proof}
    Since every ball in $X_B$ is of the form $wX_B$ for some $w \in A^*_B$, every ball is infinite. Now because $G_B$ is strongly connected, any two symbols have a path connecting them. Let $u$ be the symbol in $A$ that is 2-followed by, say, $u_1$ and $u_2$ in $A$. For any ball $wX_B$ in $X_B$, pass to a subball $w'X_B$, such that $w'=vu$ for some $v \in A^*_B$. Note that $w'X_B \subseteq wX_B$, means $\vert w' \vert \geq \vert w\vert$ and $w$ is a subword of $w'$. Then $w'X_B$ contains two disjoint infinite balls, $w'u_1X_B$ and $w'u_2X_B$. Therefore, $\Gamma$ satisfies the first condition of being an FSS$^*$ group. 

    To show $\Gamma$ satisfies the second FSS$^*$ condition, consider two balls $wX_B$ and $vX_B$. Write $w=w'u$, so we know what the last symbol of $w$ is. If $v=v'u$ for some $v' \in A^*_B$, then by the definition of the similarity structure above we know $\text{Sim}(wX_B,vX_B)$ is non-empty. If $v$ is not of this form, we note that there exists some word of the form $v\ell u \in A^*_B$ for some $\ell \in A^*_B$, because $G_B$ is strongly connected. Therefore, $v\ell uX_B$ is a subball of $vX_B$ such that $\text{Sim}(wX_B,v\ell uX_B)$ is non-empty. 

\end{proof}

\begin{example}
    We recover the Higman-Thompson groups $V_d$ by considering the $d \times d$ transition matrix $B$, where every entry is a 1. In this case, $X_B$ is just $A^\omega$ itself and the corresponding similarity structure agrees with the similarity structure defined earlier for $V_d$.
\end{example}

\begin{example}\label{ex:fibtree}
    Consider the transition matrix $B = \begin{bmatrix}
        1 & 1 \\
        1 & 0 
    \end{bmatrix}$. 
    Then the corresponding subshift of finite type $X_B$ is called the Golden Mean subshift of finite type and the corresponding tree $(\mathcal T_B,\emptyset)$ is the Fibonacci tree, thus $X_B = \text{end}(\mathcal T_B,\emptyset)$ is also called the Fibonacci space. See Figure \ref{fig:FibTree}. It is clear from the transition matrix and the figure that $X_B$ is both irreducible and that $1$ is a 2-followed symbol, so in light of Proposition \ref{prop:SFTareFSS*}, $\Gamma(\text{Sim}_{X_B}) \leq \text{LS}(X_B)$ is an FSS$^*$ group for the Golden Mean subshift.  
\end{example}

The Golden Mean subshift, or the Fibonacci space in the context of \cite{hughes2012treesnoncomm}, was our motivation to understand if subshifts of finite type could admit a natural FSS$^*$ group. It turns out that whenever the isometry group of a subshift of finite type $X_B$ is finite, the entire local similarity group is an FSS group as explained in \cite{hughes2009} Examples 4.6 and Example 4.7, and which we now recount. However, in general it is not the same similarity structure or FSS group as we defined above for subshifts of finite type. 

\begin{figure}
\centering
\resizebox{0.7\textwidth}{!}{
\begin{tikzpicture}
\tikzstyle{every node}=[font=\LARGE]
\draw  (10,13.5) --  (3.75,9.75) node[above] {0};
\draw  (10,13.5) -- (16.25,9.75) node[above] {1};
\draw  (3.75,9.75) --   (0,6) node[above] {0};
\draw  (3.75,9.75) -- (6.25,6) node[above] {1};
\draw  (16.25,9.75) -- (13.75,6) node[above] {0};
\draw  (6.25,6) --  (5,3.5) node[above] {0};
\draw  (5,3.5) -- (4,1) node[above] {0};
\draw  (5,3.5) -- (6.25,1) node[above] {1};
\draw  (0,6) -- (-2.5,3.5) node[above] {0};
\draw  (0,6) -- (1.25,3.5) node[above] {1};
\draw  (-2.5,3.5) -- (-3.75,1) node[above] {0};
\draw  (-2.5,3.5) -- (-1.25,1) node[above] {1};
\draw  (13.75,6) -- (11.25,3.5) node[above] {0};
\draw  (13.75,6) -- (16.25,3.5) node[above] {1};
\draw  (16.25,3.5) -- (15,1) node[above] {0};
\draw  (11.25,3.5) -- (10,1) node[above] {0};
\draw  (11.25,3.5) -- (12.5,1) node[above] {1};
\draw  (1.25,3.5) -- (0,1) node[above] {0};

\end{tikzpicture}
}

\caption[The Fibonacci tree corresponding to the Golden Mean subshift of finite type.]{The Fibonacci tree corresponds to the Golden Mean subshift of finite type in Example \ref{ex:fibtree}.}
\label{fig:FibTree}
\end{figure}
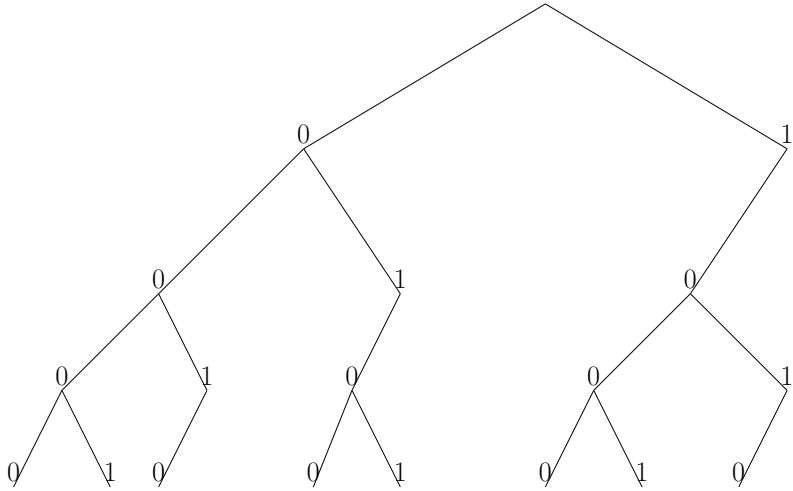

When $T$ is a locally finite simplical tree, we say $T$ is \emph{rigid} if Aut$(T)$ is finite. If $(T,v)$ is assumed to also be geodesically complete, then $T$ being rigid is equivalent to $X = \text{end}(T,v)$ being locally rigid, which means the isometry group Isom$(X)$ is finite (see \cite{hughes2012treesnoncomm}). When Isom$(X)$ is finite, it follows that the set of surjective similarities between any two closed balls $B$ and $D$ in $X$ is finite. Thus, in this setting $\Gamma = \text{LS}(X)$, is itself an FSS group if we take  $\text{Sim}(B,D)$ to be the set of all similarities between $B$ and $D$. 

It turns out that the Fibonacci tree is not only locally rigid, but rigid, meaning it has no isometries other than the identity. This concept is also of importance from the perspective of subshifts of finite type, where the Golden Mean subshift $X_B$ is an example of a subshift of finite type with Aut$(X_B)$ being trivial. Note that an automorphism of a subshift of finite type must additionally commute with the shift. 

The similarity structure Hughes defines for locally rigid compact ultrametric spaces is not the same as the one we just defined for subshifts of finite type even in the case when $X_B$ is a locally rigid compact ultrametric space. The one we define is, in general, smaller and actually agrees with the local order preserving similarity structure that leads to $V_{d,r}$ being an FSS groups from Example \ref{ex:Vdr}.  

\begin{remark}
    In \cite{Matui20215SFT}, Matui studied the groupoids of subshifts of finite type, and it turns out the topological full group is exactly the FSS$^*$ groups defined in this section. Note that \cite{Matui20215SFT} takes a different convention of defining subshifts of finite type from matrices and directed graphs, but they turn out to be quite similar and one can shift between the two perspectives. Moreover, Matui proved in \cite[Section 6.7.2]{Matui20215SFT} that the FSS group arising from the Golden Mean subshift from Example \ref{ex:fibtree} is isomoprhic to the Thompson group $V_2$, using the homology of \'etale groupoids and the connections with their topological full group. An example of an FSS$^*$ group arising from a subshift of finite type that is not isomorphic to some $V_{d,r}$ is given in  \cite[Section 6.7.3]{Matui20215SFT}.
\end{remark}

\subsection{Braided Diagram Groups}

    In \cite[Theorem 4.12]{hughes2017braided}, Farley and Hughes proved that if $D_b(\mathcal P,x)$ is a braided diagram group with a tree-like semigroup presentation $\mathcal P =\langle \Sigma \vert \mathcal R \rangle $, there exists a compact ultrametric space $X_{\mathcal P}$ and a finite similarity structure $\text{Sim}_{X_\mathcal P}$ such that $D_b(\mathcal P,x)$ is isomorphic to the FSS group $\Gamma(\text{Sim}_{X_{\mathcal P}})$. In particular, they proved that the Houghton groups $H_n$ and $\text{QAut}(\mathcal T_{2,c})$, a particular group of quasi-automorphisms of the infinite binary tree, are FSS groups by first showing that they are braided diagram groups coming from a tree-like semigroup presentation. Given that the Houghton groups are elementary amenable (see \cite{Lee2012Houghton}), $H_n$ provides an example of FSS groups that are not in FSS$^*$. On the other hand, we prove in Example \ref{ex:quasiauto} that $\text{QAut}(\mathcal T_{2,c})$ is an FSS$^*$ group. 

    We briefly recall braided diagram groups and the general construction of Farley and Hughes, which associates a compact ultrametric space to a tree-like semigroup presentation.

    Given an alphabet $\Sigma$, the \textit{free semigroup} $\Sigma^+$, is the set of all non-empty strings of elements of $\Sigma$. A \textit{semigroup presentation}, $\mathcal P = \langle \Sigma \vert \mathcal R\rangle$, is the alphabet $\Sigma$, together with the set of relations $\mathcal R \subseteq \Sigma^+\times\Sigma^+$. We write $(w_1,w_2) \in \mathcal R$ instead of equality, because there will be need to distinguish the left and right side.

    We avoid actually defining a braided diagram group and refer to \cite{hughes2017braided} for a precise definition and \cite{GubaSapir1997} for a more general background on diagram and braided diagram groups. However, it should be emphasized that braided diagram groups do not contain braids in the intuitive sense and have little to do with actual braid groups. Instead, we recall how Farley and Hughes constructed a rooted tree (and thus a compact ultrametric space) from a tree-like semigroup presentation.  
    
    \begin{definition}[\cite{hughes2017braided}]
        A semigroup presentation $\mathcal P = \langle\Sigma, \mathcal R\rangle$ is said to be \textit{tree-like} if
        \begin{enumerate}
            \item every $(w_1,w_2) \in \mathcal R$ is such that $\vert w_1\vert = 1$ and $\vert w_2\vert > 1$;
            \item if $(a,w_1), (a,w_2) \in \mathcal R$, then $w_1$ and $w_2$ are equal as reduced words.
        \end{enumerate}
    \end{definition}

    The goal now is to construct the compact ultrametric space that a braided diagram group with a tree-like semigroup presentation is the FSS group for.  To that end, we recall the following construction that appears in the proof of \cite[Theorem 4.12]{hughes2017braided} (Proposition \ref{prop:diagramFSS}). 
     
     Let $\mathcal P = \langle \Sigma \vert \mathcal R\rangle$ be a tree-like semigroup presentation and $x \in \Sigma$. Set this $x$ to be the root vertex of the tree and, by the definition of a tree-like semigroup presentation, there is at most one relation where $x$ is on the left side of the relation. If $x$ is not the left side of any relation, the tree is just a single vertex. Assume then that we have $(x, x_1x_2\dots x_k)$ in $\mathcal R$, $k \geq 2$. Construct $k$ edges coming down from the vertex labeled $x$, and label each edge with $x_1$ to $x_k$, so that if you read across the edges from left to right, you will see the word $x_1x_2\dots x_k$. Now whenever $x_i$ is not the left side of any relation, that edge becomes a dead end in the tree. If instead we have $(x_i,y_1\dots y_n)$ in $\mathcal R$ for $n \geq 2$, draw $n$ edges coming down from $x_i$ and, as before, label them $y_1$ to $y_k$. Repeat this process indefinitely or until every branch of the tree terminates. See figures \ref{fig:Houghton2} and \ref{fig:QuasiAut} for examples. 
     
     Now let Ends$(\mathcal T_{(\mathcal P, x)})$ be the set of all paths $p$ along the edges in the tree $\mathcal T_{(\mathcal P, x)}$, as defined above, that start at the root and do not backtrack. Notice that $p$ is either infinite or terminates at a dead end in the tree (a vertex with no children). Given $p, p'$ in Ends$(\mathcal T_{(\mathcal P, x)})$ that have exactly $m$ edges in common, define a metric on Ends$(\mathcal T_{(\mathcal P, x)})$  by setting  $d(p,p') =e^{-m}$. This makes Ends$(\mathcal T_{(\mathcal P, x)})$ into a compact ultrametric space. Notice, this is similar to the end space of an infinite rooted tree however, it is not the same since Ends$(\mathcal T_{(\mathcal P, x)})$ can have finite paths caused by dead ends in the tree (see Remark \ref{re:CompleteTree}). The balls in Ends$(\mathcal T_{(\mathcal P, x)})$ are labeled by a vertex in $\mathcal T_{(\mathcal P, x)}$ and given by $B_v = \{p \in \text{Ends}(\mathcal T_{(\mathcal P, x)}) : v \; \text{lies on} \: p \}$. Notice that if the vertex $v$ has no children, then $B_v$ is actually a finite set. 
    
    The finite similarity structure $\text{Sim}_{X_{\mathcal P}}$ on Ends$(\mathcal T_{(\mathcal P, x)})$ is then defined as follows. For any two balls $B_v$ and $B_{v'}$ in Ends$(\mathcal T_{(\mathcal P, x)})$, set $\text{Sim}_{X_{\mathcal P}}(B_v,B_{v'})=0$ if $v$ and $v'$ have different labels from $\Sigma$. If $v$ and $v'$ have the same label $y \in \Sigma$, then there is a label- and order- preserving isomorphism $\psi: \mathcal T_v \rightarrow \mathcal T_{v'}$, where $\mathcal T_v$ and $ \mathcal T_{v'}$ are subtrees of $\mathcal T_{(\mathcal P, x)}$ that start a the vertex $v$ and $v'$ respectively. Note that any $p \in B_v$ is of the form $p_vq$, where $p_v$ is the unique non-backtracking path in $\mathcal T_{(\mathcal P, x)}$ from the root to $v$ and $q$ is a non-backtracking path in $\mathcal T_v$. Then define $\hat \psi: B_v \rightarrow B_{v'}$ by $\hat\psi(p_vq) = p_{v'}\psi(q)$, which is a surjective similarity. Therefore, we set $\text{Sim}_{X_\mathcal P} (B_v,B_{v'}) = \{\hat \psi \}$ to obtain a finite similarity structure $\text{Sim}_{X_\mathcal P}$ on Ends$(\mathcal T_{(\mathcal P, x)})$. The similarity structure $\text{Sim}_{X_\mathcal P}$ is further called \textit{small} because $\vert \text{Sim}_{X_\mathcal P} (B_v,B_{v'})\vert \leq 1$ for all balls $B_v$ and $B_{v'}$ in $X_\mathcal P = \text{Ends}(\mathcal T_{(\mathcal P, x)})$. 

    \begin{remark}\label{re:CompleteTree}
     For an infinite rooted tree $(\mathcal T, v)$ the end space $\text{end}(\mathcal T, v)$ requires all non-backtracking paths to be infinite, while $\text{End}(\mathcal T, v)$ does not. In particular, the way Ends$(\mathcal T_{(\mathcal P, x)})$ is defined (which is how it is defined in \cite{hughes2017braided}) does not always lead to a geodesically complete tree (though as a metric space it is complete). However, this can be rectified. Even if a symbol $x_i \in \Sigma$ is not the left side of any relation in $\mathcal R$, $x_i$ trivially relates to itself. What this means is that instead of having a dead end at $x_i$, there is an infinite path of $x_i$ that are never followed by any branching or any other symbol. With that adjustment one can consider $\text{end}(\mathcal T_{(\mathcal P, x)})$ instead of Ends$(\mathcal T_{(\mathcal P, x)})$ and get the same compact ultrametric space. 
\end{remark}

    \begin{proposition}(\cite[Theorem 4.12]{hughes2017braided})\label{prop:diagramFSS}
    Let $\mathcal P = \langle \Sigma \vert \mathcal R \rangle$ be a tree-like semigroup presentation, and $x \in \Sigma$, then there is a compact ultrametric space $X_\mathcal{P}$, a small finite similarity structure $\text{Sim}_{X_\mathcal P}$, and a compatible ball order such that $D_b(\mathcal P, x) \cong \Gamma(\text{Sim}_{X_\mathcal P})$.
\end{proposition}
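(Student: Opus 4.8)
The plan is to construct an explicit isomorphism $\Phi \colon D_b(\mathcal P, x) \to \Gamma(\text{Sim}_{X_{\mathcal P}})$ by reading a braided diagram over $\mathcal P$ as a prescription for pasting together the elementary similarities $\hat\psi$ built above. Recall that, in the tree-like setting, a braided diagram with top and bottom word both equal to $x$ admits a normal form as a positive half-diagram (a sequence of expansions applied to the single letter $x$), a braiding (a bijection of the resulting letters), and a negative half-diagram (a sequence of inverse expansions). A positive half-diagram is precisely a finite rooted subtree $S_+$ of $\mathcal T_{(\mathcal P,x)}$: each expansion applies a relation $(a, y_1 \cdots y_k) \in \mathcal R$ at a leaf labelled $a$, which is exactly the branching encoded in $\mathcal T_{(\mathcal P,x)}$. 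Here condition (1) of tree-likeness forces every relation to be an honest expansion, and condition (2) guarantees the subtree is determined by its leaf set; the leaves of $S_+$, read left to right in the edge ordering (which supplies the compatible ball order), then index a partition of $X_{\mathcal P}$ into balls $\{B_{v_1}, \dots, B_{v_n}\}$. A braided diagram thus amounts to two finite subtrees $S_+, S_-$ together with a label-preserving bijection $\tau$ from the leaves of $S_+$ to those of $S_-$. I would define $\Phi(\text{diagram})\vert_{B_{v_i}} = \hat\psi_i$, where $\hat\psi_i \in \text{Sim}(B_{v_i}, B_{\tau(v_i)})$ is the unique elementary similarity (it exists because $\tau$ preserves labels), and the Pasting Lemma (Lemma \ref{lemma:paste}) guarantees $\Phi(\text{diagram}) \in \Gamma(\text{Sim}_{X_{\mathcal P}})$.

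First I would check that $\Phi$ is well defined, i.e.\ constant on equivalence classes. The equivalence relation is generated by dipole reduction, where an expansion is immediately cancelled by the matching inverse expansion. On the ultrametric side this replaces a ball $B_v$ by the sub-balls coming from one relation and then identifies each back to itself; by the composition and restriction axioms of $\text{Sim}_{X_{\mathcal P}}$ the pasted map is unchanged, so $\Phi$ descends to $D_b(\mathcal P,x)$. Next, $\Phi$ is a homomorphism: composing two diagrams means stacking and reducing, which on partitions corresponds to passing to a common refinement of the two relevant partitions (one exists and is again a partition by the refinement lemma proved above). Over a common refinement the matched similarities compose by the composition axiom, so $\Phi(gh) = \Phi(g)\Phi(h)$.

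Finally I would establish bijectivity. For injectivity, if $\Phi(\text{diagram}) = \mathrm{id}$ then each $\hat\psi_i$ is the identity, forcing $B_{\tau(v_i)} = B_{v_i}$ and $\tau = \mathrm{id}$ on a common refinement, so the diagram reduces to the trivial diagram on $x$. For surjectivity I would invoke the structure of regions recalled at the start of this section: given $g \in \Gamma(\text{Sim}_{X_{\mathcal P}})$, its maximum partition (Definition \ref{def:maxpart}) consists of balls $B_{v_i}$ with $g\vert_{B_{v_i}} \in \text{Sim}(B_{v_i}, g(B_{v_i}))$. Because the similarity structure is \emph{small}, each such restriction is exactly one $\hat\psi$, hence is label- and order-preserving; assembling the $B_{v_i}$ into a finite subtree $S_+$, their images into a finite subtree $S_-$, and recording the induced leaf bijection produces a braided diagram mapping to $g$ under $\Phi$.

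The step I expect to be the main obstacle is verifying well-definedness and the homomorphism property together, since both hinge on the interaction between the purely combinatorial dipole-reduction and stacking operations on diagrams and the common-refinement operation on ball partitions. The crucial leverage is tree-likeness: condition (1) makes positive half-diagrams exactly finite subtrees, and condition (2) makes those subtrees, and hence the associated partitions, canonical. Without these the bijection between positive half-diagrams and ball partitions would fail and the correspondence would not be a group isomorphism.
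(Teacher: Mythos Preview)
The paper does not supply its own proof of this proposition: it is quoted verbatim as \cite[Theorem 4.12]{hughes2017braided}, and only the \emph{construction} of $X_{\mathcal P}$ and $\text{Sim}_{X_{\mathcal P}}$ is recalled in the paragraphs preceding the statement. There is therefore no in-paper argument to compare your proposal against; the comparison is really with the original Farley--Hughes proof.

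That said, your sketch is the right one and tracks the Farley--Hughes strategy closely. The identification of a reduced $(x,x)$-braided diagram over a tree-like $\mathcal P$ with a pair of finite rooted subtrees $S_\pm\subset\mathcal T_{(\mathcal P,x)}$ together with a label-preserving leaf bijection is exactly the normal form that makes the isomorphism go through, and your use of the Pasting Lemma to assemble the $\hat\psi_i$ into an element of $\Gamma(\text{Sim}_{X_{\mathcal P}})$ is correct. The well-definedness and homomorphism checks you outline are the genuine content, and you have identified the right mechanisms (dipole reduction on one side, common refinement on the other). One small point worth tightening: for surjectivity you invoke the maximum partition of $g$, but you should also note that the set of balls $\{B_{v_i}\}$ so obtained is the leaf set of a finite rooted subtree of $\mathcal T_{(\mathcal P,x)}$ (equivalently, that every partition of $X_{\mathcal P}$ into balls arises from such a subtree); this is where compactness and the tree structure are used and is easy but should be said. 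Likewise, in the injectivity step, ``$\tau=\mathrm{id}$ on a common refinement implies the diagram reduces to the trivial one'' uses that two positive half-diagrams with the same leaf set are equal, which is precisely condition (2) of tree-likeness---you mention this earlier but it is worth flagging at the point of use.
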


    This construction of $X_\mathcal P$ appears in the broader context of \cite{hughes2017braided}, where Farley and Hughes also prove that given a compact ultrametric space with a small similarity structure the corresponding FSS group is isomorphic to a braided diagram group coming from a tree-like semigroup presentation. However, our current motivation is to analyze why $H_n$ is not an FSS$^*$ group and why $\text{QAut}(\mathcal T_{2,c})$ is from the perspective of Definition \ref{def:CSS*}.

\begin{example}
    Since $V_d \cong D_b(\mathcal P_d,x)$ where $\mathcal P_d = \langle x \vert (x,x^d)\rangle$ is a tree-like semigroup presentation, we recover $V_d$ from the braided diagram group perspective. Guba and Sapir also remark at the end of Section 16 in \cite{GubaSapir1997} that $V_{d,r}$ arises as braided $(x^r,x^r)$-diagrams over the same presentation $\mathcal P_d$. Note they say picture instead of diagram and use Brown's notation $G_{n,r}$ instead of $V_{d,r}$.
    
\end{example}

\begin{figure}
\centering
\resizebox{.5\textwidth}{!}{
\begin{tikzpicture}
\tikzstyle{every node}=[font=\small]
\draw  (1.5,20.75) -- (1.5,19.5);
\draw  (1.5,19.5) -- (-0.75,18.5);
\draw  (1.5,19.5) -- (3.75,18.5);
\draw  (-0.75,18.5) -- (0.75,15.5);
\draw  (-0.75,18.5) -- (-1.5,17.75);
\draw  (-0.25,17.5) -- (-1,16.5);
\draw  (0.25,16.5) -- (-0.5,15.5);
\draw  (3.75,18.5) -- (5.25,15.5);
\draw  (3.75,18.5) -- (3,17.5);
\draw  (4.25,17.5) -- (3.5,16.5);
\draw  (4.75,16.5) -- (4,15.5);
\draw [->, dashed] (0.75,15.5) -- (1.25,14.5);
\draw [->, dashed] (5.25,15.5) -- (5.75,14.5);
\node [font=\small] at (1.25,20.25) {r};
\node [font=\small] at (0.25,19.25) {$x_1$};
\node [font=\small] at (2.5,19.25) {$x_2$};
\node [font=\small] at (-0.25,18.25) {$x_1$};
\node [font=\small] at (0.25,17.25) {$x_1$};
\node [font=\small] at (4.25,18.25) {$x_2$};
\node [font=\small] at (4.75,17.25) {$x_2$};
\node [font=\small] at (0.75,16) {$x_1$};
\node [font=\small] at (5.25,16) {$x_2$};
\node [font=\small] at (-1.25,18.25) {a};
\node [font=\small] at (-0.75,17.25) {a};
\node [font=\small] at (-0.25,16.25) {a};
\node [font=\small] at (3.75,17.25) {a};
\node [font=\small] at (4.25,16.25) {a};
\node [font=\small] at (3.25,18.25) {a};
\end{tikzpicture}
}
\caption[The tree on whose end space the Houghton group $H_2$ acts on.]{The tree on whose end space the Houghton group $H_2$ acts on, from Example \ref{ex:Hough2}.}
\label{fig:Houghton2}
\end{figure}
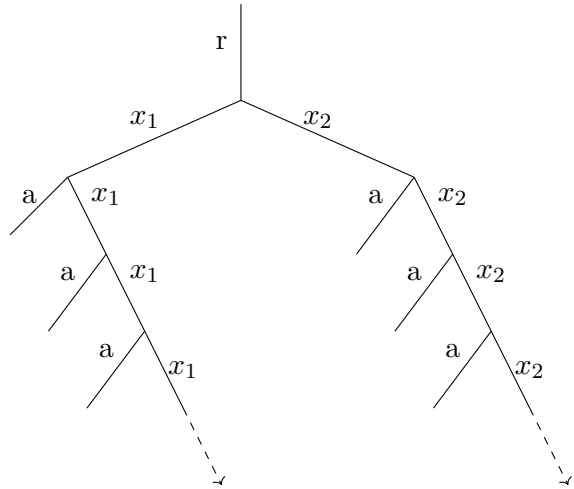

\begin{example}[Houghton Groups]\label{ex:Hough2}

    To define the Houghton groups, consider the set of $n$ disjoint rays, $G_n = \{ 1, \dots , n\} \times [0,\infty)$. The \textit{Houghton group} $H_n$ is then defined to be the set of self-bijections of $G_n$ that are translations outside of a finite subset. That is, a bijection $h: \{ 1, \dots , n\} \times [0,\infty) \rightarrow \{ 1, \dots , n\} \times [0,\infty)$ is in $H_n$ if there exists integers $m_1, \dots, m_n$ and a finite subset $M \subset \{ 1, \dots , n\} \times [0,\infty)$ such that for all $(k,i) \in (\{ 1, \dots , n\} \times [0,\infty))\backslash M$, $h(k,i) = (k,i+m_k)$. 

    In \cite{hughes2017braided}, Farley and Hughes proved the braided diagram group $D_b(\mathcal P_n,r)$ is isomorphic to $H_n$ for the following semigroup presentation,
    \[\mathcal P_n = \langle a, r, x_1, \dots, x_n \vert (r,x_1x_2x_3\dots x_n), (x_1,ax_1), (x_2,ax_2), \dots , (x_n,ax_n)\rangle.\]

    The compact ultrametric space for which the Houghton group $H_2$ is the corresponding FSS group is given in Figure \ref{fig:Houghton2} and comes from the more general construction in  \cite{hughes2017braided}, outlined earlier in this section. From that perspective, we can see that $H_n$ fails both FSS$^*$ conditions (Definition \ref{def:CSS*}). Indeed, considering $H_2$ and Figure \ref{fig:Houghton2} we see that the only infinite balls in $\text{Ends}(\mathcal T_{(\mathcal P_2, r)})$ are labeled by $x_1$ or $x_2$ and no ball labeled by $x_1$ will contain a ball labeled by $x_2$ and vice versa. Therefore, taking $B_v$ labeled by $x_1$ and $B_{v'}$ labeled by $x_2$, we see that there is no subball $B_u \subseteq B_{v'}$, for which $\text{Sim}(B_v,B_u)$ is nonempty. It is also clear that any infinite ball labeled by $x_1$ or $x_2$ does not contain two disjoint infinite balls. Therefore, $H_n$ is not an FSS$^*$ group. Of course, because $H_n$ is amenable, as pointed out earlier, it could not be a candidate for the main results of this paper. However, the Houghton groups exemplify how even the ping-pong argument (Lemma  \ref{lemma:freesubgroup}) breaks down without the FSS$^*$ conditions.

\end{example}

\begin{example}\label{ex:quasiauto}
    First defined by Lehnert in \cite{lehnert2008gruppen}, $\text{QAut}(\mathcal T_{2,c})$ can be defined to be the group of self-bijections $h$ of the vertices of the infinite ordered rooted binary tree $\mathcal T_2$ such that $h$ preserves adjacency, with at most finitely many exceptions and also preserves the ordering of the edges incident with and below a given vertex, with at most finitely many exceptions. Notably, Thompson's group $V$ embeds into $\text{QAut}(\mathcal T_{2,c})$, (first shown in \cite{lehnert2008gruppen}) and $\text{QAut}(\mathcal T_{2,c})$ also embeds into $V$ (first shown in \cite{BleakMatucci2016} and reproved in \cite{hughes2017braided}), while the two are not isomorphic ($\text{QAut}(\mathcal T_{2,c})$ is not simple and has normal subgroups described in \cite{Nucinkis2018}). 

    In \cite{hughes2017braided}, Farley and Hughes showed that $\text{QAut}(\mathcal T_{2,c})$ can also be represented as a braided diagram group coming from a tree-like presentation and thus it is also an FSS group. In particular, $\text{QAut}(\mathcal T_{2,c}) \cong D_b(\mathcal P, x)$, where $\mathcal P = \langle a, x \vert (x,xax)\rangle$. 

    The compact ultrametric space for which $\text{QAut}(\mathcal T_{2,c})$ is the corresponding FSS group is again built following the general procedure given at the beginning of the section. It is straightforward from looking at the compact ultrametric space $\text{Ends}(\mathcal T_{(\mathcal P, x)})$ (see Figure \ref{fig:QuasiAut}) that $\text{QAut}(\mathcal T_{2,c})$ is an FSS$^*$ group. Indeed, the infinite balls in $\text{Ends}(\mathcal T_{(\mathcal P, x)})$ all share the same label $x$, so the second FSS$^*$ condition is always satisfied without even passing to a subball. Given that every vertex labeled by $x$ is followed by two labeled by $x$ and one labeled by $a$, we take the balls corresponding to the two subballs labeled by $x$ to see every infinite ball contains at least two disjoint infinite balls. Finally, since all of the finite balls are labeled by $a$ and every infinite ball contains such a ball, the second FSS$^*$ condition is satisfied for the finite balls as well. Therefore, we conclude $\text{QAut}(\mathcal T_{2,c})$ is an FSS$^*$ group.

\end{example}

\begin{figure}
\centering
\resizebox{.7\textwidth}{!}{
\begin{tikzpicture}
\tikzstyle{every node}=[font=\LARGE]
\draw  (0.75,19.75) -- (-3,17.5);
\draw  (0.75,19.75) -- (4.5,17.5);
\draw  (0.75,19.75) -- (0.75,17.75);
\draw  (0.75,21.25) -- (0.75,19.75);
\draw  (0.75,17.5) -- (0.75,18.75);
\draw  (-3,17.5) -- (-5.25,16);
\draw  (-3,17.5) -- (-0.75,16);
\draw  (-3,17.5) -- (-3,16);
\draw  (4.5,17.5) -- (2.25,16);
\draw  (4.5,17.5) -- (6.75,16);
\draw  (4.5,17.5) -- (4.5,16);
\draw  (-5.25,16) -- (-6.75,14.5);
\draw  (-5.25,16) -- (-5.25,14.5);
\draw  (-5.25,16) -- (-3.75,14.5);
\draw  (-0.75,16) -- (-2.25,14.5);
\draw  (-0.75,16) -- (0.25,14.5);
\draw  (-0.75,16) -- (-0.75,14.5);
\draw  (2.25,16) -- (1.25,14.5);
\draw  (2.25,16) -- (3.75,14.5);
\draw  (2.25,16) -- (2.25,14.5);
\draw  (6.75,16) -- (5.25,14.5);
\draw  (6.75,16) -- (8.25,14.5);
\draw  (6.75,16) -- (6.75,14.5);
\node [font=\large] at (0.25,18.5) {a};
\node [font=\large] at (-3.25,16.75) {a};
\node [font=\large] at (4.25,16.75) {a};
\node [font=\large] at (-5.5,15) {a};
\node [font=\large] at (-1,15) {a};
\node [font=\large] at (2,15) {a};
\node [font=\large] at (-4,17) {x};
\node [font=\large] at (-1.75,17) {x};
\node [font=\large] at (1.5,15.75) {x};
\node [font=\large] at (3.25,15.5) {x};
\node [font=\large] at (3.25,17) {x};
\node [font=\large] at (5.75,17.25) {x};
\node [font=\large] at (5.75,15.5) {x};
\node [font=\large] at (7.75,15.5) {x};
\node [font=\large] at (6.5,15) {a};
\node [font=\large] at (-6.5,15.5) {x};
\node [font=\large] at (-4.25,15.5) {x};
\node [font=\large] at (-2,15.5) {x};
\node [font=\large] at (0,15.5) {x};
\node [font=\large] at (-1.75,19) {x};
\node [font=\large] at (2.5,19) {x};
\node [font=\large] at (0.25,20.5) {x};
\draw [->,  dashed] (-6.75,14.5) -- (-7.25,13.75);
\draw [->, dashed] (-3.75,14.5) -- (-3.5,13.75);
\draw [->, dashed] (-2.25,14.5) -- (-2.5,13.75);
\draw [->,  dashed] (0.25,14.5) -- (0.5,13.75);
\draw [->,  dashed] (1.25,14.5) -- (1,13.75);
\draw [->,  dashed] (3.75,14.5) -- (4,13.75);
\draw [->,  dashed] (5.25,14.5) -- (5,13.75);
\draw [->,  dashed] (8.25,14.5) -- (8.5,14);
\end{tikzpicture}
}
\caption[The tree on whose end space $\text{QAut}(\mathcal T_{2,c})$ acts on]{The tree on whose end space $\text{QAut}(\mathcal T_{2,c})$ acts on, from Example \ref{ex:quasiauto}.}
\label{fig:QuasiAut}
\end{figure}
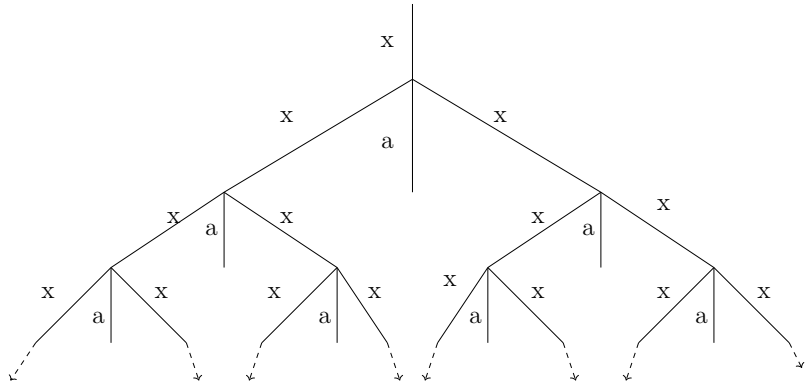

\section{An Unbounded Cocycle for \texorpdfstring{CSS$^*$}{CSS*} groups}\label{sec:cocycle}

In this section, we prove that CSS groups admit unbounded cocycles by generalizing Hughes' result from \cite{hughes2009}, and more specifically, prove that CSS$^*$ groups admit an unbounded cocycle into a quasi-regular representation.

Recall that for a discrete group $\Gamma$, a 1-cocycle is a continuous map $b:\Gamma \rightarrow \mathcal H$ into a Hilbert space $\mathcal H$, together with a unitary representation $\pi: \Gamma \rightarrow \mathcal U (\mathcal H)$, such that for all $g,h \in \Gamma$, $b$ satisfies the cocycle identity: $b(gh)=b(g)+\pi(g)b(h)$. When we refer to a 1-cocycle or cocycle, we are referring to the triple $(b,\pi,\mathcal H)$. A 1-cocycle is said to be \textit{proper} if the set $\{g \in \Gamma : \|b(g)\| \leq r\}$ is compact for every $r >0$. One of the many equivalent definitions for a group $\Gamma$ to have the \textit{Haagerup property}, is if it admits a proper 1-cocycle $b$. In \cite[Proposition 7.5.1]{CherixEtall2001}, Valette also shows that if a countable, discrete group $\Gamma$ admits a zipper action, it also admits a proper 1-cocycle. We say $\Gamma$ admits a \textit{zipper action} if there exists a left action $\Gamma \curvearrowright \mathcal E$ on a set $\mathcal E$ and a subset $Z \subseteq \mathcal E$ such that for every $g \in \Gamma$, $\vert gZ \triangle Z\vert$ is finite, and for every $r > 0$, the set $\{g\in \Gamma \vert \:\vert gZ \triangle Z \vert \leq r\}$ is finite. 

For our use, we are interested in groups that admit unbounded 1-cocycles. A cocycle is \textit{unbounded} if there exists a sequence $(g_n)_{n \in \mathbb N}$ in $\Gamma$ such that $\|b(g_n)\| \rightarrow \infty$ as $n \rightarrow \infty$. Notice that a proper cocycle is unbounded, but an unbounded cocycle need not be proper. To that end, we relax the conditions of a zipper action and arrive at the following definition.

\begin{definition}
    A discrete group $\Gamma$ has a \textit{generalized zipper action} if there exists a left action $\Gamma \curvearrowright \mathcal E$ on a set $\mathcal E$ and an infinite subset $Z \subseteq \mathcal E$ such that for every $g \in \Gamma$, the symmetric difference $gZ \triangle Z$ is finite. 
\end{definition}

Valette's result still holds if we consider a generalized zipper action and a cocycle that is not necessarily proper. 

\begin{proposition}\label{prop:zippercocycle}
    If a discrete group $\Gamma$ admits a generalized zipper action, then $\Gamma$ admits a 1-cocycle $(b,\pi,\ell^2(\mathcal E))$. 
\end{proposition}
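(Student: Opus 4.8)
The plan is to construct the cocycle directly from the permutation representation attached to the action $\Gamma \curvearrowright \mathcal{E}$. Let $\{\delta_e\}_{e \in \mathcal{E}}$ denote the standard orthonormal basis of $\ell^2(\mathcal{E})$, and let $\pi : \Gamma \to \mathcal{U}(\ell^2(\mathcal{E}))$ be the unitary representation determined by $\pi(g)\delta_e = \delta_{ge}$; this is a genuine unitary representation precisely because each $g$ acts as a bijection of $\mathcal{E}$. The natural candidate for the cocycle is the ``coboundary of the indicator of $Z$'', namely the function
\[ b(g) := \chi_{gZ} - \chi_Z, \]
interpreted pointwise as the function on $\mathcal{E}$ equal to $+1$ on $gZ \setminus Z$, equal to $-1$ on $Z \setminus gZ$, and $0$ elsewhere.

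The first step is to check that $b$ really maps into $\ell^2(\mathcal{E})$. Although $Z$ is infinite---so that $\chi_Z$ is not itself an $\ell^2$-vector---the function $b(g)$ is supported on the symmetric difference $gZ \triangle Z$, which is finite by the generalized zipper hypothesis. Hence $b(g)$ is finitely supported, so $b(g) \in \ell^2(\mathcal{E})$ with $\|b(g)\|^2 = |gZ \triangle Z|$. The key point, and really the only place any care is needed, is that every manipulation below must be carried out at the level of these finitely supported differences rather than by applying operators to $\chi_Z$ directly.

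The second step is to verify the cocycle identity $b(gh) = b(g) + \pi(g)b(h)$. Writing $b(h)$ as the finitely supported combination $\sum_{e \in hZ \setminus Z}\delta_e - \sum_{e \in Z \setminus hZ}\delta_e$ and applying $\pi(g)$ term by term gives
\[ \pi(g)b(h) = \chi_{g(hZ \setminus Z)} - \chi_{g(Z \setminus hZ)} = \chi_{ghZ} - \chi_{gZ}, \]
where the last equality uses that $g$ acts bijectively, so $g(hZ \setminus Z) = ghZ \setminus gZ$ and $g(Z \setminus hZ) = gZ \setminus ghZ$. Consequently
\[ b(g) + \pi(g)b(h) = (\chi_{gZ} - \chi_Z) + (\chi_{ghZ} - \chi_{gZ}) = \chi_{ghZ} - \chi_Z = b(gh), \]
as required. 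Continuity of $b$ is automatic since $\Gamma$ is discrete, so $(b, \pi, \ell^2(\mathcal{E}))$ is a $1$-cocycle.

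I do not anticipate any genuine obstacle here: once the bookkeeping keeping everything inside the finitely supported functions is in place, the verification is purely formal, and the generalized zipper condition is invoked exactly once, to guarantee $b(g) \in \ell^2(\mathcal{E})$. I would only flag that, should the surrounding argument later need this cocycle to be \emph{unbounded}, one must separately exhibit a sequence $(g_n)$ with $|g_n Z \triangle Z| \to \infty$; the statement as posed asserts only the existence of the cocycle, for which the above suffices.
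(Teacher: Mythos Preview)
Your proof is correct and follows essentially the same approach as the paper: define $\pi$ as the permutation representation on $\ell^2(\mathcal{E})$, set $b(g) = \chi_{gZ} - \chi_Z$, use the finiteness of $gZ \triangle Z$ to land in $\ell^2$, and verify the cocycle identity. You actually supply more detail than the paper does (which leaves the cocycle verification to the reader), and your closing remark about unboundedness correctly anticipates the very next step in the paper's development.
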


\begin{proof}
    Assume $\Gamma$ admits a generalized zipper action and define $b: \Gamma \rightarrow \ell^\infty(\mathcal E)$ by $b(g) = \chi_{gZ}-\chi_Z$. Note that the support of $b(g)$ is $gZ \triangle Z$, so $b(g)$ is finitely supported and thus $b(g)$ is more precisely in $\ell^2(\mathcal E)$ for all $g \in \Gamma$. The action $\Gamma \curvearrowright \mathcal E$ induces a unitary left representation $\Gamma$ on $\ell^2(\mathcal E)$, $\pi:\Gamma \rightarrow \mathcal B(\ell^2(\mathcal E)),$ given by $\pi(g)\delta_E = \delta_{gE}$, for all $g \in \Gamma$ and  $\delta_E$ a basis vector for $\ell^2(\mathcal E)$. One can then check that $b$ satisfies the cocycle condition for all $g_1, g_2 \in \Gamma$ $b(g_1g_2) = \pi(g_1)b(g_2)+b(g_1)$. 
    
\end{proof}

The idea now is to show CSS groups have a generalized zipper action and moreover, the resulting cocycle is unbounded. 

Let $\Gamma$ be a CSS group and let $\mathcal E$ be the set of equivalences classes of pairs $(f,B)$ where $B$ is a closed ball $X$ and $f:B \rightarrow X$ is a local similarity embedding determined by Sim. Two pairs $(f_1,B_1)$ and $(f_2,B_2)$ are equivalent, whenever their exists $h \in \text{Sim}(B_1,B_2)$ such that $f_2h=f_1$. So equivalences implies $f_1(B_1) = f_2(B_2)$. Denote the equivalence classes by $[f, B]$. 

Define a subset of $\mathcal E$ to be $Z= \{ [f,B] \in \mathcal E : f(B)$ is a closed ball in $X$ and $f \in \text{Sim}(B,f(B))\}$. Alternatively, $Z= \{ [\text{incl}_B,B] \in  \mathcal E : B$ is a closed ball in $X\}$. Intuitively, $Z$ is the set of pairs $[f,B]$ where $f(B)$ is a single closed ball. For a general element $[f,B]$ in $\mathcal E$, $f(B)$ is only guaranteed to be a finite union of closed balls. 

In \cite[Theorem 6.1]{hughes2009}, Hughes proved the that every FSS group has a zipper action, and thus a proper 1-cocycle defined as, $b: \Gamma \rightarrow \ell^2(\mathcal E)$ where $b(g) = \chi_{gZ}-\chi_Z$ for all $g \in \Gamma$. To prove our main results about CSS$^*$ groups, we need to show that the set of equivalence classes $\mathcal E$ can be interpreted differently.

The group $\Gamma$ acts naturally on the set $\mathcal{E}$ defined by $g \cdot [f,B] = [gf,B]$ for all $g \in \Gamma$, where the composition and restriction properties of the similarity structure guarantee $[gf,B] \in \mathcal E$. For each $[f,B] \in \mathcal E$, recall that we can define the stabilizer subgroup $\Gamma_U$ for any closed set $U$ in $X$, thus $\Gamma_B$, makes sense for any closed balls. Now, the orbits of $\Gamma$ acting on $\mathcal E$, $\Gamma \cdot [f,B] = \{g \cdot [f,B] : g\in \Gamma \}$ can actually all be written as $\Gamma \cdot [\text{incl}\vert_B, B]$ for some ball $B$ in $X$, given that $[f,B] = f \cdot [\text{incl}\vert_B, B]$. Then, by the orbit-stabilizer theorem, we know $\Gamma \cdot [\text{incl}\vert_B, B]$ is in one-to-one correspondence with the set of cosets $\Gamma / \Gamma_B$. Therefore, we can decompose $\mathcal E$ as,

\[ \mathcal{E} = \bigsqcup\limits_{B \subset X}\Gamma \cdot [\text{incl}\vert_B, B] \leftrightarrow  \bigsqcup\limits_{B \subset X} \Gamma / \Gamma_B.\]

We also observe that it is not actually a union over every ball in $X$ because many elements of $\mathcal E$ can have the same orbit. In fact, for $\Gamma = V_d$, there is only one orbit, and the description of $\mathcal E$ completely agrees with $V_d/V_{[0,1/d)}$ from \cite{farley2003proper}. The set $Z \subset \mathcal E$ can also be written as a disjoint union. Define $$Z_{B_0} = \{ [\text{incl}\vert_B, B] \in  \mathcal E \cap \big(\Gamma \cdot  [\text{incl}\vert_{B_0}, B_0] \big) : B \: \text{is a closed ball in} \: X\}.$$ So $Z_{B_0} \subset Z$ and we have the useful identification,
$$Z_{B_0} \cong \{f\Gamma_{B_0} : f\in \Gamma \: \text{and} \: f(B_0) \: \text{is a closed ball} \} \subset \Gamma / \Gamma_{B_0}.$$ 
Moreover, it follows that $Z = \sqcup_{B \subset X} Z_B$, where the disjoint union is still determined by the orbits $\Gamma \cdot [f,B]$.

The following theorem is only a slight generalization of \cite[Theorem 6.1]{hughes2009}. By moving from FSS to CSS groups,  $\text{Sim}(B,D)$ can now be infinite for $B$ and $D$ balls in $X$. However, the only part of Hughes' proof that changes is that the zipper action is no longer metrically proper. We prove Hughes' result in our context, essentially reusing Lemma 6.2 and Lemma 6.3 from \cite{hughes2009}.

\begin{theorem}(cf \cite[Theorem 6.1]{hughes2009})\label{thm:CssZipper}
     Let $\Gamma \leq \text{LS}(X)$ be a CSS group. Then $\Gamma$ has a generalized zipper action. 
\end{theorem}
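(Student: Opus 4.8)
The plan is to verify the two defining conditions of a generalized zipper action directly for the action $g\cdot[f,B]=[gf,B]$ of $\Gamma$ on $\mathcal E$ and the subset $Z$ already constructed. The key preliminary observation is that $Z$ is in natural bijection with the set of closed balls of $X$: every class in $Z$ has a representative whose image is a single ball, so it equals $[\text{incl}_D,D]$ for $D=f(B)$, and distinct balls give distinct classes (an equality $[\text{incl}_{D_1},D_1]=[\text{incl}_{D_2},D_2]$ would require $h\in\text{Sim}(D_1,D_2)$ with $\text{incl}_{D_2}\circ h=\text{incl}_{D_1}$, forcing $h=\mathrm{id}$ and $D_1=D_2$). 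Since $X$ is infinite and compact it has a non-isolated point, hence an infinite descending chain of distinct balls, so $Z$ is infinite.

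The heart of the matter is to show $gZ\triangle Z$ is finite for every $g\in\Gamma$, and I would do this by translating membership into the language of regions. For a ball $B$ we have $g\cdot[\text{incl}_B,B]=[g|_B,B]$, and since the image $g(B)$ is constant along an equivalence class, this class lies in $Z$ exactly when $g(B)$ is a single ball and $g|_B\in\text{Sim}(B,g(B))$, i.e. exactly when $B$ is a region for $g$; in that case $[g|_B,B]=[\text{incl}_{g(B)},g(B)]$. Using the bijection $Z\leftrightarrow\{\text{balls}\}$ this identifies
\[
gZ\setminus Z \;\longleftrightarrow\; \{\,B : B\text{ is not a region for }g\,\}.
\]
Applying the same reasoning to $g^{-1}$, together with the region lemma (item~(3) of the regions definition, which gives that $D$ equals $g(B)$ for a region $B$ of $g$ if and only if $D$ is a region for $g^{-1}$), identifies
\[
Z\setminus gZ \;\longleftrightarrow\; \{\,D : D\text{ is not a region for }g^{-1}\,\}.
\]

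It then remains to prove that the set of non-region balls of any $h\in\Gamma$ is finite, and this is the main obstacle and the only place compactness is essential. By the restriction property of $\text{Sim}$, every ball contained in a single maximum region of $h$ is again a region for $h$; hence if $B$ is \emph{not} a region, it is contained in no single maximum region, and since balls in an ultrametric space are nested or disjoint, such a $B$ must properly contain at least one maximum region. The maximum partition $\mathscr P$ of $h$ is finite by definition, and by compactness each ball of $X$ is contained in only finitely many balls; so the balls properly containing some member of $\mathscr P$ form a finite set. Thus both $gZ\setminus Z$ and $Z\setminus gZ$ are finite, whence $gZ\triangle Z$ is finite.

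Finally I would record what survives the passage from FSS to CSS. The finiteness of $gZ\triangle Z$ uses only that the maximum partition is finite and that each ball lies in finitely many balls---never the finiteness of the sets $\text{Sim}(B,D)$. It is precisely the metric properness in Hughes' Theorem~6.1 (finiteness of $\{g:\vert gZ\triangle Z\vert\le r\}$) that relies on the $\text{Sim}$ sets being finite, and this is the feature that must be dropped; this is exactly why a CSS group acquires a \emph{generalized} zipper action, and hence an unbounded but not necessarily proper cocycle, rather than a genuine zipper action.
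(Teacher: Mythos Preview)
Your proof is correct and follows essentially the same route as the paper's: both identify $Z\setminus gZ$ and $gZ\setminus Z$ with the sets of balls that fail to be regions for $g^{-1}$ and $g$ respectively, observe that a non-region ball must properly contain a member of the (finite) maximum partition, and conclude finiteness from the fact that each ball of a compact ultrametric space lies in only finitely many balls. The only cosmetic difference is that the paper states its Claim directly in terms of ``properly contains a maximum region'' whereas you pass through ``not a region'' first and then make the same reduction; your closing remark on why metric properness fails in the CSS setting also matches the paper's discussion.
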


\begin{proof}

 Let $\Gamma \leq \text{LS}(X)$ be a CSS group and consider $\mathcal E$ the set of equivalence classes of the form $[f,B]$, defined earlier in the section. Take as the zipper set $Z= \{ [\text{incl}_B,B] \in  \mathcal E : B$ is a closed ball in $X\}$, also defined earlier. Since it is clear $Z$ is infinite, showing $gZ \triangle Z$ is finite is enough to conclude the natural left action $\Gamma \curvearrowright \mathcal E$ defined by $g \cdot [f,B] = [gf,B]$ is a generalized zipper action.

\noindent \textbf{Claim:} 
Let $B$ be a closed ball in $X$ and $g \in \Gamma$. Then $[\text{incl}\vert_B, B] \in Z \backslash gZ$ if and only if $B$ properly contains a maximum region of $g^{-1}$.

  To prove this claim, first, assume $[\text{incl}_B,B] \in Z \backslash gZ$ and that there exists a maximum region $R$ for $g^{-1}$ containing $B$. We see $g^{-1}(R)$ is a ball and $g^{-1}\vert_R \in  \text{Sim}(R, g^{-1}(R))$. By the Restriction property, $g^{-1}\vert_B \in \text{Sim}(B, g^{-1}(B))$ and $[g^{-1}\vert_B,B]$ is in $Z$. However, $[\text{incl}\vert_B,B] = g[g^{-1}\vert_B,B] \in gZ$, which is a contradiction. 

  Now assume $R$ is a maximum region of $g^{-1}$ properly contained in $B$. Suppose  $[\text{incl}\vert_B,B] \in gZ$. Then by the definition of $gZ$, there is some $B_1$ such that $[\text{incl}\vert_{B_1},B_1] \in Z$, $[\text{incl}\vert_B,B] = [g\vert_{B_1},B_1]$ and $g(B_1)$ is a close ball. In particular, this means $g(B_1) =B$. This equality also implies that $g\vert_{B_1} : B_1 \rightarrow B$ is in  $\text{Sim}(B_1, B)$. Using the Inverse Property, we see $(g\vert_{B_1})^{-1} : B \rightarrow B_1$ is in  $\text{Sim}(B, B_1).$ Therefore, $B$ is a region for $g^{-1}$, which contradicts the maximality of $R$. So,  $[\text{incl}\vert_B,B] \not \in gZ$ proving the claim.

Note that the claim also implies that for each $g \in \Gamma$, there is a bijection between $Z \backslash gZ$ and the set of closed balls of $X$ properly containing maximum regions of $g^{-1}$ given by the function function $[\text{incl}\vert_B, B] \mapsto B$. We also note that since $g[\text{incl}\vert_B, B] \mapsto [\text{incl}\vert_B, B]$ is a bijection from $gZ \backslash Z$ to $Z \backslash g^{-1}Z$, the claim then also implies $g[\text{incl}\vert_B, B] \mapsto B$ is a bijection from $gZ \backslash Z$ to the set of closed balls of $X$ properly containing maximum regions of $g$.

Finally, since there are only finitely many closed balls containing a maximum region of $g$ or $g^{-1}$, the previous two bijections show that $Z \backslash gZ$ and $gZ \backslash Z$ are both finite, so we can conclude that for each $g \in \Gamma$, $gZ \triangle Z$ is finite and $\Gamma$ has a generalized zipper action.

\end{proof}

Combining Theorem \ref{thm:CssZipper} and Proposition \ref{prop:zippercocycle}, it is clear that a CSS group $\Gamma$ admits a cocycle into $\ell^2(\mathcal E)$. However, the cocycle is also unbounded. Indeed, for the cocycle $b: \Gamma \rightarrow \ell^2(\mathcal E)$ given in Proposition \ref{prop:zippercocycle}, we have $\|b(g)\|_2^2 = \vert gZ \triangle Z\vert $ for all $g \in \Gamma$. Since $\vert gZ \triangle Z\vert $ is finite by Theorem \ref{thm:CssZipper}, we know $b(g)$ is finitely supported in $\ell^2(\mathcal E)$. In particular, $\|b(g)\|_2^2$ can be thought of as counting the number of closed balls in $X$ containing maximal regions for either $g$ or $g^{-1}$. Since $X$ is compact, $\|b(g)\|_2$ is always finite.
Now, we construct a sequence $(g_n)_{n \in \mathbb N} \in \Gamma$ as follows. For each $n$ in $\mathbb N$, pick two pairwise disjoint sets of balls $\{B_1, \dots, B_n \}$ and $\{D_1, \dots, D_n\}$ that are also disjoint from each other. By the CSS$^*$ property, there exists $\tilde D_i \subseteq D_i$ for $1 \leq i\leq n$ such that $\text{Sim}(B_i,\tilde D_i)$ is non-empty. By the extension property, there exist a corresponding $g_n$ with $g_n\vert_{B_i} \in \text{Sim}(B_i,\tilde D_i)$ such that $g_n$ and $g_n^{-1}$ have partitions $\mathscr{P}_{+,n}$ and $\mathscr{P}_{-,n}$ with $\vert\mathscr{P}_{+,n}\vert =\vert \mathscr{P}_{-,n}\vert > 2n$. Therefore, $\|b(g_n)\|_2 \rightarrow \infty$, as $n \rightarrow \infty$, showing $b$ is an unbounded cocycle.

To understand why $b$ is not proper for CSS groups that are not also FSS groups, we remark that at the end of the proof of Theorem \ref{thm:CssZipper}, it was shown that the set $Z\backslash gZ$ is in bijection with the set of closed balls properly containing maximum regions of $g^{-1}$ and $gZ\backslash Z$ is in bijection with the set of closed balls properly containing maximum regions of $g^{-1}$. Therefore, the size of the set $\{g \in \Gamma : \|b(g)\|_2 \leq r \}$, depends on how many $h \in \Gamma$ have maximal regions containing the maximal region of $g$. In other words, $\vert\{g \in \Gamma : \|b(g)\|_2 \leq r \} \vert$, roughly speaking, counts how many $g \in \Gamma$ have a maximum partition no greater than a certain finite cardinality. However, as we can see from Lemma \ref{lemma:partition}, $\Gamma(\mathscr{P}_{\pm})$ being infinite, or the fact that some  $\text{Sim}(B,D)$ is infinite when $\Gamma$ is a CSS group that is not an FSS group, that there are infinitely many $g \in \Gamma$ with maximum partition of the same cardinality, so $\{g \in \Gamma \vert \|b(g)\|_2 \leq r \}$ will always be an infinite set.

\begin{corollary} If $\Gamma \leq \text{LS}(X)$ is a CSS group, then it is admits an unbounded, but not necessarily proper, 1-cocycle $(b,\pi,\mathcal \ell^2(\mathcal E))$.  
\end{corollary}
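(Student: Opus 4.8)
The plan is to read this statement off as the immediate combination of the two results preceding it, together with the norm computation already sketched in the surrounding discussion. Applying Theorem \ref{thm:CssZipper} furnishes a generalized zipper action $\Gamma \curvearrowright \mathcal E$ with zipper set $Z$, and feeding this into Proposition \ref{prop:zippercocycle} produces the cocycle $(b,\pi,\ell^2(\mathcal E))$ with $b(g)=\chi_{gZ}-\chi_Z$ and $\pi(g)\delta_E=\delta_{gE}$. Since $b(g)$ is a difference of indicator functions, it is $\{-1,0,1\}$-valued with support exactly $gZ\triangle Z$, so $\|b(g)\|_2^2=|gZ\triangle Z|$, which is finite by Theorem \ref{thm:CssZipper}. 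Hence $b$ genuinely lands in $\ell^2(\mathcal E)$ and only unboundedness (and the remark on non-properness) requires work.

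To establish unboundedness I would exploit the two bijections recorded at the end of the proof of Theorem \ref{thm:CssZipper}: $|gZ\triangle Z|$ counts the closed balls of $X$ that properly contain a maximum region of $g$ together with those that properly contain a maximum region of $g^{-1}$. Because the balls containing a fixed ball form a chain under inclusion whose length is the depth of that ball, it suffices to produce a sequence $(g_n)$ whose maximum regions sit at unbounded depth. Starting from any nontrivial $g_0\in\Gamma$, Lemma \ref{lemma:disjointballs} yields a ball $B$ with $B\cap g_0(B)=\emptyset$ and $g_0|_B\in\text{Sim}(B,g_0(B))$; the restriction property of $\text{Sim}$ then gives, for subballs $B_n\subseteq B$ of depth $n$, similarities $g_0|_{B_n}\in\text{Sim}(B_n,g_0(B_n))$. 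Pasting $g_0|_{B_n}$, its inverse, and the identity elsewhere (Lemma \ref{lemma:paste}) produces an involution $g_n\in\Gamma$ for which $B_n$ is a maximum region; since the chain of balls properly containing $B_n$ has length tending to infinity with $n$, we get $\|b(g_n)\|_2\to\infty$. Equivalently, one may arrange the maximum partitions of $g_n$ and $g_n^{-1}$ to have cardinality $n$, as asserted in the text.

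Finally, for the remark that $b$ need not be proper when $\Gamma$ is CSS but not FSS, I would invoke Lemma \ref{lemma:partition}: the value $\|b(g)\|_2$ is determined by the maximum partitions of $g$ and $g^{-1}$, so each sublevel set $\{g:\|b(g)\|_2\le r\}$ is a finite union of sets of the form $\Gamma(\mathscr{P}_\pm)$; when some $\text{Sim}(B,D)$ is infinite, at least one such $\Gamma(\mathscr{P}_\pm)$ is infinite, making the sublevel set infinite and hence non-compact in the discrete topology. I expect the only genuine content to be the sequence in the second paragraph: the subtle point is confirming that a general CSS group really does contain elements whose maximum regions occur at arbitrary depth, which is precisely what the restriction property applied to a single nontrivial element secures; everything else is bookkeeping with the bijections supplied by Theorem \ref{thm:CssZipper}.
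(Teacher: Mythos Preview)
Your proposal is correct and follows essentially the same approach as the paper: combine Theorem \ref{thm:CssZipper} with Proposition \ref{prop:zippercocycle}, compute $\|b(g)\|_2^2=|gZ\triangle Z|$ via the bijections from the end of that proof, exhibit a sequence witnessing unboundedness, and invoke Lemma \ref{lemma:partition} for the non-properness remark. The only difference is that the paper simply asserts the existence of $g_n$ with maximum partitions of cardinality $n$, whereas you give a more explicit construction by pushing a single maximum region to arbitrary depth; the two are equivalent, as you note. One small caveat: Lemma \ref{lemma:disjointballs} is stated in the paper for CSS$^*$ groups, not general CSS groups, so strictly speaking you should observe that its proof uses only the local-similarity and restriction properties and hence applies verbatim to any CSS group (and that the ball $B$ you obtain can be taken infinite provided $X$ is infinite, so that subballs of arbitrary depth exist).
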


Ultimately, we want the cocycle to be associated to a quasi-regular representation corresponding to a subgroup of $\Gamma$. It turns out that the CSS$^*$ condition is sufficient to arrive at such a cocycle. We first state a useful lemma. 

\begin{lemma}\label{lemma:inforbit}
     Let $\Gamma \leq \text{LS}(X)$ be a CSS$^*$ group and $\Gamma \curvearrowright \mathcal E$ the canonical action defined previously. Then the orbit of $[f,B] \in \mathcal E$ is infinite, for any proper ball $B \subset X$. 
\end{lemma}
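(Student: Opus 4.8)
The plan is to reduce the statement to producing infinitely many distinct $\Gamma$-images of $B$. First I would observe that the orbit of $[f,B]$ coincides with the orbit of $[\text{incl}\vert_B, B]$, since $[f,B] = f\cdot[\text{incl}\vert_B,B]$; hence it suffices to study the set $\{[g\vert_B, B] : g \in \Gamma\}$. The key elementary remark is that if $g_1(B) \neq g_2(B)$ as subsets of $X$, then $[g_1\vert_B, B] \neq [g_2\vert_B, B]$: indeed, any equivalence $g_2\vert_B \circ h = g_1\vert_B$ with $h \in \text{Sim}(B,B)$ forces $g_1(B) = g_2(h(B)) = g_2(B)$, since $h$ is a bijection of $B$. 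Thus the orbit is infinite as soon as there are infinitely many $g \in \Gamma$ whose images $g(B)$ are pairwise distinct, and I will guarantee distinctness by arranging the images to be pairwise disjoint.

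Next I would extract infinitely many pairwise disjoint infinite balls to serve as targets. Using condition (1) of Definition \ref{def:CSS*} recursively on an infinite ball $E$, one splits $E$ into two disjoint infinite subballs, reserves one, and recurses on the other, thereby producing pairwise disjoint infinite balls $D_1, D_2, \dots \subseteq E$. The choice of the seed ball $E$ depends on $B$. If $B$ is infinite, I take $E = B$, so that each $D_n$ is a proper infinite subball of $B$. If $B$ is finite, then $X \setminus B$ is infinite (as $X$ is infinite) and clopen, hence is a finite union of balls one of which, call it $E$, is infinite and disjoint from $B$; I then take the $D_n \subseteq E$, all disjoint from $B$. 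In either case, condition (2) of Definition \ref{def:CSS*} applied to the proper ball $B$ and the infinite proper ball $D_n$ yields a subball $D_n^0 \subseteq D_n$ with $\text{Sim}(B, D_n^0) \neq \emptyset$, and since the $D_n$ are pairwise disjoint, so are the $D_n^0$.

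Finally I would realize each $D_n^0$ as an image of $B$ under an element of $\Gamma$, and here the two cases invoke the two extension results already established. When $B$ is finite, the target $D_n^0$ lies in $E$ and so is disjoint from $B$, and Corollary \ref{cor:extension} directly provides an involution $h_n \in \Gamma$ with $h_n(B) = D_n^0$. When $B$ is infinite, $D_n^0 \subseteq B$ is itself infinite, and neither $\{B\}$ nor $\{D_n^0\}$ is a complete partition of $X$ (both are proper), so Proposition \ref{prop:extension}, applied with the chosen similarity in $\text{Sim}(B, D_n^0)$, yields $g_n \in \Gamma$ with $g_n\vert_B \in \text{Sim}(B, D_n^0)$, whence $g_n(B) = D_n^0$. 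In both cases the images $\{g_n(B)\}_n = \{D_n^0\}_n$ are pairwise disjoint, so by the remark in the first paragraph the orbit elements $[g_n\vert_B, B]$ are pairwise distinct, and the orbit of $[f,B]$ is infinite.

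I expect the only real subtlety, and hence the main obstacle, to be the bookkeeping forced by whether $B$ is finite or infinite. Because the two available realization tools have complementary hypotheses, Corollary \ref{cor:extension} requiring a target disjoint from $B$ and Proposition \ref{prop:extension} requiring infinite balls, one must route the finite case through a disjoint infinite ball sitting in $X \setminus B$ and the infinite case through subballs of $B$ itself; in particular one cannot always find an infinite ball disjoint from $B$ when $B$ is cofinite. Everything else is a routine application of the ultrametric nesting dichotomy together with the two CSS$^*$ conditions.
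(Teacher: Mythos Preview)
Your proof is correct and follows essentially the same approach as the paper: reduce to the orbit of $[\text{incl}\vert_B,B]$, use the CSS$^*$ conditions together with the extension results (Corollary \ref{cor:extension} for finite $B$, Proposition \ref{prop:extension} for infinite $B$) to produce infinitely many $g_n \in \Gamma$ with distinct images $g_n(B)$, and conclude that the classes $[g_n\vert_B,B]$ are pairwise distinct. The only cosmetic difference is that in the infinite case the paper builds a \emph{nested} chain $B \supsetneq D_0 \supsetneq D_1 \supsetneq \cdots$ of targets, whereas you build pairwise \emph{disjoint} targets; your upfront observation that $g_1(B)\neq g_2(B)$ forces $[g_1\vert_B,B]\neq[g_2\vert_B,B]$ makes the distinctness check slightly cleaner than the paper's contradiction argument, but the underlying idea is identical.
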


\begin{proof}

    Assume $B \subset X$ is an infinite proper ball. We first claim it is enough to consider orbits of the form $\Gamma \cdot [\text{incl}\vert_{B}, B]$. Indeed, since $f^{-1} \cdot [f, B] = [id, B] = [\text{incl}\vert_B,B]$, we have that $[\text{incl}\vert_B,B] \in \Gamma \cdot [f,B]$. So consider the orbit $\Gamma \cdot [\text{incl}\vert_{B}, B]$ and an infinite proper subball $D \subsetneq B$. Since $\Gamma$ is a CSS$^*$ group, there exists a subball $D_0 \subseteq D$ and some $g \in \text{Sim}(B,D_0)$. Moreover, by the extension property (Proposition \ref{prop:extension}), we can replace $g$ by an element of $\Gamma$ and also call it $g$. Then $g \cdot [\text{incl}\vert_{B}, B] = [g, B]$ is in $\Gamma \cdot [\text{incl}\vert_{B}, B]$. 
    
    We next claim $[g,B]= [\text{incl}\vert_{D_0}, D_0]$. We need to show there exists and $h \in \text{Sim}(B, D_0)$ such that $\text{incl}\vert_{D_0} \circ h = g$. However, we can simply take $h =g $. Now, by the first property of being a CSS$^*$ group, we know that there exists a proper subball $D_1 \subsetneq D_0$. Choose $g_1 \in \text{Sim}(D_0, D_1)$, replacing $D_1$ with a subball if necessary, where we again use the fact that $\Gamma$ is a CSS$^*$ group. Again, use the extension property to view $g_1$ as an element of $\Gamma$. By the previous argument, we then have that $[g_1,D_0]= [\text{incl}\vert_{D_1}, D_1]$. Now we claim $[g_1,D_0] \neq [g,B]$. Suppose instead that the two were equal, then there exists $h \in \text{Sim}(D_0,B)$ such that $g \circ h (D_0) = g_1(D_0) = D_1$, however this is a contradiction, because $g\circ h(D_0) = g(B) = D_0 \neq D_1$. Therefore, $g\cdot[\text{incl}\vert_B,B] = [g,B]$ and $g_1g\cdot[\text{incl}\vert_B,B] = [\text{incl}\vert_{D_1},D_1] = [g_1,D_0]$ are two distinct elements of $\Gamma \cdot [\text{incl}\vert_{B}, B]$. Next, choose a proper subbball $D_2 \subsetneq D_1$ and some $g_2 \in \text{Sim}(D_1, D_2)$, replacing $D_2$ with a subball if necessary. By the same argument as above, $[g_2,D_1] = [\text{incl}\vert_{D_2},D_2]$, $[g_2,D_1] \neq [g_1,D_0]$, and $[g_2,D_1] \neq [g,B]$. Repeating this process indefinitely, we get 
    \begin{align*}
       \{ g_ng_{n-1}\cdots g_1\cdot [\text{incl}\vert_B,B]\}_{n \in \mathbb N} & = \{ [g_n,D_{n-1}]\}_{n \in \mathbb N} \\
       & = \{ [\text{incl}\vert_{D_n}, D_n] \}_{n \in \mathbb N} \subset \Gamma \cdot [\text{incl}\vert_{B}, B], 
    \end{align*}
    meaning every orbit of $ [f,B] \in \mathcal E$ is infinite when $B$ is infinite. 

    Now assume $B \subset X$ is a finite ball. Since $X$ is infinite, pick some infinite ball $D_1 \subset X \setminus B$, then by the CSS$^*$ property, there exists some ball $\widetilde D_1 \subset D_1$ and some $f_1 \in \text{Sim}(B, \widetilde D_1)$. Note that $[\text{incl}\vert_B,B] \neq [f_1,B]$. By Corollary \ref{cor:extension}, we can replace $f_1$ by an involution in $\Gamma$, so $f_1 \cdot [\text{incl}\vert_B,B] = [f_1,B]$ is in the orbit of $[\text{incl}\vert_B,B]$. Next, pick and infinite ball $D_2 \subset D_1 \setminus \widetilde D_1$ (which is always possible by the CSS$^*$ property).  Again, by the CSS$^*$ property, there exists some finite ball $\widetilde D_2 \subset D_2$ and some $f_2 \in \text{Sim}(\widetilde D_1, \widetilde D_2)$. Using Corollary \ref{cor:extension} to view $f_2$ as an involution in $\Gamma$, we observe the following
    \[ [\text{incl}\vert_B,B] \neq [f_1,\widetilde B] \neq [f_2,\widetilde D_1] = f_2f_1\cdot [\text{incl}\vert_B,B].\]
    Continuing this process by picking infinite $D_n \subset D_{n-1}\setminus \widetilde D_{n-1}$, we see that $[f_n,\widetilde D_n]=f_nf_{n-1}\cdots f_1\cdot [\text{incl}\vert_B,B]$ is in the orbit of $[\text{incl}\vert_B,B]$ for all $n$ (and is distinct for all $n$), and hence the orbit of $[g,B] \in \mathcal E$ is infinite for every finite ball $B$. 
\end{proof}

\begin{theorem}\label{thm:cocycleProj}
     Let $\Gamma \leq \text{LS}(X)$ be a CSS$^*$ group. Then for each orbit $\Gamma \cdot [\text{incl}\vert_B, B]$ of an infinite ball $B \subset X$, there exists an unbounded cocycle $(b_B,\lambda_{\Gamma/\Gamma_B},\ell^2(\Gamma / \Gamma_B))$, where $\lambda_{\Gamma / \Gamma_B}$ is the quasi-regular representation for the stabilizer subgroup $\Gamma_B \leq \Gamma$. If $\Gamma$ is additionally an FSS$^*$, then $b_B$ is proper. 
\end{theorem}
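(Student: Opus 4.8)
The plan is to obtain $b_B$ by projecting the generalized zipper cocycle $(b,\pi,\ell^2(\mathcal E))$ of Proposition \ref{prop:zippercocycle} and Theorem \ref{thm:CssZipper} onto the summand of $\ell^2(\mathcal E)$ supported on the single orbit $O := \Gamma \cdot [\mathrm{incl}\vert_B, B]$. Using the orbit decomposition $\mathcal E = \bigsqcup_{B'} \Gamma\cdot[\mathrm{incl}\vert_{B'}, B']$ recalled in Section \ref{sec:cocycle}, one gets a $\Gamma$-invariant orthogonal splitting $\ell^2(\mathcal E) = \bigoplus_{O'} \ell^2(O')$ under which $\pi = \bigoplus_{O'} \lambda_{\Gamma/\Gamma_{B'}}$, the summand indexed by $O$ being exactly $\ell^2(\Gamma/\Gamma_B)$ carrying the quasi-regular representation $\lambda_{\Gamma/\Gamma_B}$ via the orbit--stabilizer identification. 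I would let $P_B$ be the orthogonal projection onto this summand and set $b_B := P_B \circ b$, equivalently $b_B(g) = \chi_{gZ_B} - \chi_{Z_B}$, where $Z_B = Z \cap O$ is the piece of the zipper set defined in Section \ref{sec:cocycle}. Since $\ell^2(\Gamma/\Gamma_B)$ is $\Gamma$-invariant, $P_B$ commutes with every $\pi(g)$, so projecting the cocycle identity $b(g_1 g_2) = \pi(g_1) b(g_2) + b(g_1)$ yields $b_B(g_1 g_2) = \lambda_{\Gamma/\Gamma_B}(g_1) b_B(g_2) + b_B(g_1)$, and finite support of $b_B(g)$ is inherited from that of $b(g)$. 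Thus $(b_B, \lambda_{\Gamma/\Gamma_B}, \ell^2(\Gamma/\Gamma_B))$ is a cocycle.

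For unboundedness, restricting to $O$ the bijection established in the proof of Theorem \ref{thm:CssZipper} shows that $\|b_B(g)\|_2^2 = |gZ_B \triangle Z_B|$ equals the number of balls $D$ with $[\mathrm{incl}\vert_D, D] \in O$ (i.e.\ balls lying in the orbit of $B$) that properly contain a maximum region of $g$ or of $g^{-1}$. To force this to grow I would build, using the CSS$^*$ hypotheses, a nested chain of balls in the orbit of $B$. Condition (2) of Definition \ref{def:CSS*} gives, inside each infinite ball of the orbit, a proper subball again in that orbit, while condition (1) supplies room to branch; iterating this and invoking Lemma \ref{lemma:inforbit} together with the extension property (Proposition \ref{prop:extension}, Corollary \ref{cor:extension}) produces $g_n \in \Gamma$ whose maximum regions sit strictly below a chain $B = D_0 \supsetneq D_1 \supsetneq \cdots \supsetneq D_n$ of balls all in the orbit of $B$. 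Each $D_i$ is then a ball in the orbit of $B$ properly containing a maximum region of $g_n$, so $\|b_B(g_n)\|_2^2 \geq n$ and $b_B$ is unbounded.

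When $\Gamma$ is an FSS group every set $\mathrm{Sim}(B',D')$ is finite, so by Lemma \ref{lemma:partition} there are only finitely many elements with maximum partition of any prescribed cardinality; it therefore suffices to bound $|\mathscr P_+|$, the size of the maximum partition of $g$, in terms of $\|b_B(g)\|_2$. Each ball in the compact ultrametric space $X$ has only finitely many, say at most $M$, maximal proper subballs, so the tree of proper ancestors of the parts of $\mathscr P_+$ has at least $(|\mathscr P_+|-1)/(M-1)$ branch balls. \textbf{The hard part} is to show that a definite proportion of these ancestor balls lie in the orbit of $B$: condition (2) of Definition \ref{def:CSS*} guarantees every infinite ball contains a subball in the orbit of $B$, and the plan is to upgrade this to a \emph{uniform} bound on the depth one must descend from an infinite ball before meeting such a subball, exploiting the rigidity and finiteness imposed by the FSS$^*$ structure. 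Granting this, $\|b_B(g)\|_2^2 \gtrsim |\mathscr P_+|$, whence $\{g : \|b_B(g)\|_2 \leq r\}$ is finite and $b_B$ is proper. I expect this quantitative ``cofinality of the orbit of $B$'' to be the crux of the argument, with the construction, the cocycle identity, and unboundedness being comparatively routine.
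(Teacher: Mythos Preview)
Your construction of $b_B$ as the orthogonal projection $P_B b$ onto the orbit summand, the verification of the cocycle identity, and your unboundedness argument via a nested chain of balls in the orbit of $B$ are essentially the paper's approach: the paper too sets $b_B = P_B b$, writes $b_B(g) = \chi_{gZ_B} - \chi_{Z_B}$, and then uses Lemma~\ref{lemma:inforbit} together with the extension property to build explicit elements $f_n \in \Gamma$ whose maximum partitions have size exceeding $2n$, forcing $|f_n Z_B \triangle Z_B| \to \infty$. Your nested-chain variant is a minor repackaging of the same mechanism.

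The substantive divergence is in the FSS$^*$ properness claim. You attempt a direct argument and correctly isolate as the crux a ``cofinality'' statement: that a definite proportion of the ancestor balls of a maximum partition must lie in the orbit of $B$. The paper does not engage with this at all; it handles properness in one sentence, asserting that $b_B$ is proper ``given that the cocycle $b$ is proper, by Hughes' original proof of \cite[Theorem~6.1]{hughes2009}.'' Note, however, that properness of $b$ does \emph{not} formally imply properness of its projection $b_B$, since $\|b_B(g)\| \le \|b(g)\|$ gives the inclusion $\{g : \|b(g)\| \le r\} \subseteq \{g : \|b_B(g)\| \le r\}$, which is the wrong direction. So the paper's citation must be read as an appeal to Hughes' proof \emph{technique} working orbit-by-orbit rather than to the stated result, and that orbit-restricted version is not made explicit. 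In short: your ``hard part'' is real, and the paper sidesteps it by deferring to Hughes rather than filling it; your plan is more forthright about what remains to be shown, even though you have not closed the gap either.
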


\begin{proof}
     Theorem \ref{thm:CssZipper} and Proposition \ref{prop:zippercocycle} prove that a CSS$^*$ group admits an unbounded cocycle into $\ell^2(\mathcal E)$. Now identify $\mathcal E$ with it's orbits, as was explained earlier in the section. Then, for each distinct orbit $\Gamma \cdot [\text{incl}\vert_B, B]$, let $P_B: \ell^2(\mathcal E) \rightarrow \ell^2(\Gamma \cdot [\text{incl}\vert_B, B])$, be the projection of $\ell^2(\mathcal E)$ onto $\ell^2(\Gamma \cdot [\text{incl}\vert_B, B])$. We then claim $b_B := P_Bb: \Gamma \rightarrow \ell^2(\Gamma \cdot [\text{incl}\vert_B, B])$ is an unbounded cocycle. From the definition of $b$, we have that $b_B(g)= P_Bb(g) = P_B( \chi_{gZ} - \chi_{Z})= \chi_{gZ_B} - \chi_{Z_B}$, since $P_B\chi_Z = \chi_{Z_B}$. 

     Then, for each orbit $\Gamma \cdot [\text{incl}\vert_B, B]$, we use Lemma \ref{lemma:inforbit} to find a sequence of balls $(D_n)_{n \in \mathbb N}$ with $ B \supsetneq D_0 \supsetneq D_1 \supsetneq \dots \supsetneq D_n$ and $g_n \in \text{Sim}(D_{n-1},D_n)$ such that $\{ [g_n,D_{n-1}]\}_{n \in \mathbb N}$ is in $\Gamma \cdot [\text{incl}\vert_B, B]$. Now, for each $n \in \mathbb N$ let $S_{n,1} = \{B_{n,1}, \dots, B_{n,n} \}$ and $S_{n,2} = \{A_{n,1}, \dots, A_{n,n} \}$ be two sets of balls in $X$ that are respectively pairwise disjoint and such that $S_{n,1} \cap S_{n,2} = \emptyset$ and $B \cap \{  S_{n,1}, S_{n,2}\} = \emptyset$. Passing to subballs of each $A_{n,i}$ if necessary, choose $h_{n,i} \in \text{Sim}(B_{n,i},A_{n,i})$. Now, set $S_{n,1}' = \{B_{n,1}, \dots, B_{n,n}, D_{n-1} \}$, $S_{n,2}' = \{A_{n,1}, \dots, A_{n,n}, D_n  \}$, and specify $g_n$ to be the similarity in $\text{Sim}(D_{n-1},D_n)$. Then, by Proposition \ref{prop:extension}, there exists and $f_n \in \Gamma$ such that $f_n\vert_{B_{n,i}} = h_{n,i}$ and $f_n\vert_{D_{n-1}} = g_n$. It follows that $\{ f_nf_{n-1}\cdots f_1\cdot [\text{incl}\vert_B,B]\}_{n \in \mathbb N}$ is an infinite distinct sequence in  $\Gamma \cdot [\text{incl}\vert_B, B]$ with maximum partition $\mathcal P_n$, such that $\vert \mathcal P_n \vert >n$. In particular, $\|b_B(f_n) \|_2^2 = \vert f_nZ_B \triangle Z_B\vert > 2\vert \mathcal P_n \vert$. Since $\vert P_n\vert \rightarrow \infty$ as $n \rightarrow \infty$,  $b_B$ is unbounded. Now, since $\Gamma \cdot [\text{incl}\vert_B, B]$ is in one-to-one correspondence with $\Gamma / \Gamma_B$, and $\lambda_{\Gamma / \Gamma_B}$ is the associated quasi-regular representation, $(b_B,\lambda_{\Gamma/\Gamma_B},\ell^2(\Gamma / \Gamma_B))$ is an unbounded cocycle.  
     
     If $\Gamma$ is additionally an FSS$^*$ group, we know that $b_B$ is also proper, given that the cocycle $b$ is proper, by Hughes' original proof of \cite[Theorem 6.1]{hughes2009}.
\end{proof}

\section{Non-inner Amenability in \texorpdfstring{CSS$^*$}{CSS*} Groups}\label{sec:nonInnerAmeanCSS}

In this section, we prove that FSS$^*$ groups and the countable R\"{o}ver-Nekrashevych group are non-inner amenable. Recall that a countable discrete group $G$ is said to be \emph{inner amenable} if there exists a finite additive probability measure $\mu$ on the subsets of $G \setminus \{e\}$ such that $\mu(g^{-1}Eg) =\mu(E)$ for all $E \subset G \setminus\{e\}$. Given that we now know an FSS$^*$ group $\Gamma$ admits a proper cocycle into the quasi-regular representation $\lambda_{\Gamma /\Gamma_B}$, if we prove the representation is non-amenable, then $\Gamma$ satisfies property (HH) of Ozawa and Popa and is non-inner amenable by \cite[Proposition 2.1]{ozawapopa2010cartanII}.

It is well known that showing $\lambda_{\Gamma /\Gamma_B}$ is non-amenable is equivalent to showing $\Gamma_B$ is not co-amenable in $\Gamma$ (see for example \cite[Theorem 2.3]{bekka1990}). Recall that for a discrete group $G$ with subgroup $H$, we say $H$ is co-amenable to $G$ if there exists a finite additive probability measure $\mu$ on the subsets of $G/H$ that is $G$-invariant. Thus, to prove $\Gamma_B$ is not co-amenable in $\Gamma$, it is enough to construct a $\Gamma$-paradoxical decomposition of $\Gamma / \Gamma_B$.

\begin{definition}
We say a group $G \curvearrowright X$ acting on a set $X$, admits a \emph{paradoxical decomposition} if there exist positive integers $m$ and $n$, disjoint subsets $X_1, \dots , X_m$ and $Y_1, \dots, Y_n$ of $X$, and subsets $S_1 = \{g_1, \dots, g_m \}$, $S_2 = \{h_1, \dots, h_n \}$ of $G$ such that
\begin{align*}
    X &= X_1 \sqcup\dots \sqcup X_m \sqcup Y_1 \dots \sqcup Y_n \\
    & = \bigcup_{i=1}^m g_iX_i = \bigcup_{i=1}^n h_iY_i
\end{align*}
The sets $S_1$ and $S_2$ are the \emph{translating sets} of the paradoxical decomposition.
\end{definition}

\begin{theorem}\label{thm:paradoxical}
    Let $\Gamma \leq \text{LS}(X)$ be a CSS$^*$ group. Let $\Gamma_{B}$ be the stabilizer subgroup for a proper ball $B \subset X$. Then $\Gamma \curvearrowright \Gamma / \Gamma_{B}$ has a paradoxical decomposition. In particular, $\Gamma_{B}$ is not co-amenable to $\Gamma$ and $\lambda_{\Gamma /\Gamma_{B}}$ is a non-amenable representation.
\end{theorem}

\begin{proof}
    We construct a paradoxical decomposition for the action of $\Gamma \curvearrowright \Gamma$ on itself and show it factors through the action $\Gamma \curvearrowright \Gamma / \Gamma_B$. By the CSS$^*$ property, we can write $X$ as the disjoint union of two infinite sets: $X = U \sqcup V$. We can also decompose $U$ and $V$ as the disjoint union of two infinite sets: $U = U_1 \sqcup U_2$ and $V = V_1 \sqcup V_2$. Now fix a ball $B \subseteq U_1$ and $x_0$ in $B$ and define $\mathcal U = \{g \in \Gamma : g(x_0) \in U \}$, and $\mathcal V = \{g \in \Gamma : g(x_0) \in V\}$.  Similarly, define $\mathcal U_i = \{g \in \Gamma : g(x_0) \in U_i \}$ and $\mathcal V_i = \{g \in \Gamma : g(x_0) \in V_i \}$, for $i \in \{1,2\}$. Then, 
 $$\Gamma = \mathcal U \sqcup \mathcal V = \mathcal U_1 \sqcup \mathcal U_2 \sqcup \mathcal V_1 \sqcup \mathcal V_2.$$
We now describe how to construct paradoxical translates $g_1, g_2, h_1,$ and $h_2$ in $\Gamma$. To construct $g_1$ take a partition of $ U = \{B_1, \dots, B_n\}$. Then, using the CSS$^*$ property we can find $n$ balls $\{ D_1, \dots, D_n\}$ in $ U_1$ for which (after passing to subballs if necessary) $\text{Sim}(B_i, D_i)$ is non-empty for $1 \leq i \leq n$. By the extension property (Proposition \ref{prop:extension}), we can then define $g_1$ such that $g_1\vert_{B_i} \in \text{Sim}(B_i,D_i)$ for $1\leq i \leq n$. In particular, $g_1(\mathcal U) \subseteq \mathcal U_1$, since for any $f\in \mathcal U$, $f(x_0) \in U$ and $g_1(U) \subseteq U_1$. Using the analogous process as with $g_1$, we can construct $g_2, h_1, h_2 \in \Gamma$ such that $g_2(\mathcal U) \subseteq \mathcal U_2$, $h_1(\mathcal V) \subseteq \mathcal V_1$, and $h_2(\mathcal V) \subseteq \mathcal V_2$. Since $\mathcal U_1, \mathcal U_2, \mathcal V_1,$ and $ \mathcal V_2$, are all pairwise disjoint,
we have the following paradoxical decomposition.
\begin{align*}
    \Gamma &= \mathcal U_1 \sqcup  \mathcal U_2  \sqcup \mathcal V_1 \sqcup  \mathcal V_2\\
    &=  g^{-1}_1\mathcal U_1\cup h^{-1}_1\mathcal V_1 = g^{-1}_2\mathcal U_2\cup h^{-1}_2\mathcal V_2. 
\end{align*}
We will now show this paradoxical decomposition factors through $\Gamma / \Gamma_B$. Let
$\overline{\mathcal U} = \{g\Gamma_B \in \Gamma / \Gamma_B: g(x_0) \in U  \}$, $\overline{\mathcal V} = \{g\Gamma_B \in \Gamma / \Gamma_B: g(x_0) \in V  \}$.  Similarly, define $\overline{\mathcal U}_i = \{g\Gamma_B \in \Gamma / \Gamma_B : g(x_0) \in U_i \}$ and $\overline{\mathcal V}_i = \{g\Gamma_B \in \Gamma / \Gamma_B : g(x_0) \in V_i \}$, for $i \in \{1,2\}$. Note that $\overline{\mathcal U}_i$ and $\overline{\mathcal V}_i$ are nontrivial in $\Gamma / \Gamma_B$ for $i \in \{1,2\}$, because $x_0 \in B$ and $g\Gamma_B = h\Gamma_B$ if and only if $g\vert_B = h\vert_B$. In particular, $g\Gamma_B = h\Gamma_B$ implies $g(x_0)=h(x_0)$. Therefore, we have the following paradoxical decomposition.
\begin{align*}
    \Gamma/\Gamma_B &= \overline{\mathcal U}_1 \sqcup  \overline{\mathcal U}_2  \sqcup \overline{\mathcal V}_1 \sqcup  \overline{\mathcal V}_2\\
    &=  g^{-1}_1\overline{\mathcal U}_1\cup h^{-1}_1\overline{\mathcal V}_1 = g^{-1}_2\overline{\mathcal U}_2\cup h^{-1}_2\overline{\mathcal V}_2. 
\end{align*}
Therefore, $\Gamma_B$ is not co-amenable to $\Gamma$ and $\lambda_{\Gamma / \Gamma_B}$ is a non-amenable representation. Finally, we remark that while $B$ was chosen in $U_1$, one could choose any proper ball $B$ in $X$ and adapt the above argument accordingly. Therefore, the result holds for any proper ball $B \subset X$.  
\end{proof}

\begin{theorem}[Theorem \ref{MainThm:NonInner}]\label{thm:noninneramen}
    Let $\Gamma \leq \text{LS}(X)$ be an FSS$^*$ group. Then $\Gamma$ is non-inner amenable.
\end{theorem}

\begin{proof}
Let $B \subset X$ be a proper infinite ball and $\Gamma_B$ the corresponding stabilizer subgroup of $\Gamma$. Then the quasi-regular representation $(\lambda_{\Gamma / \Gamma_B},\ell^2 (\Gamma / \Gamma_B))$ for $\Gamma$ is non-amenable by Theorem \ref{thm:paradoxical}. By Theorem \ref{thm:cocycleProj}, $\Gamma$ admits a proper cocycle into the quasi-regular representation $(b_B,\lambda_{\Gamma / \Gamma_B},\ell^2 (\Gamma / \Gamma_B))$. Thus, by \cite[Proposition 2.1]{ozawapopa2010cartanII}, $\Gamma$ is non-inner amenable.

\end{proof}

Theorem \ref{thm:noninneramen} recovers the non-inner amenability of the Higman-Thompson groups $V_d$ first proved in \cite{HO2016} and \cite{bashwinger2022non}, and in fact, is more general. The Higman-Thompson groups $V_{d,r}$ from Example \ref{ex:Vdr}, the class of FSS$^*$ groups coming from irreducible subshifts of finite type from Section \ref{sec:SFT}, and QAut$(\mathcal T_{2,c})$ from Example \ref{ex:quasiauto} are all FSS$^*$ groups, which were not previously known to be non-inner amenable. The R\"{o}ver-Nekrashevych groups $V_d(G)$ from Example \ref{ex:RNgroups} are also FSS$^*$ groups when $G$ is finite self-similar group, so Theorem \ref{thm:noninneramen} applies here as well. However, when $G$ is countable and $V_d(G)$ is a properly a CSS$^*$ group, we prove it is also non-inner amenable using a different approach. We first record a natural corollary to Theorem \ref{thm:noninneramen} on proper proximality.

\begin{corollary}[Corollary \ref{MainThmPropP}]\label{cor:properproximal}
    Let $\Gamma \leq \text{LS}(X)$ be an FSS$^*$ group. Then $\Gamma$ is properly proximal.
\end{corollary}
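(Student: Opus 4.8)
The plan is to apply the result of Boutonnet, Ioana, and Peterson \cite[Proposition 1.6]{boutonnet2021properly}, which guarantees that a countable group admitting a proper $1$-cocycle into a non-amenable orthogonal representation is properly proximal. Both inputs are already available from earlier in the paper, so the proof will essentially be a matter of assembling them. First I would note that $\Gamma$ is countable by Lemma \ref{lemma:countable}, so the standing hypothesis on the ambient group is met.

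Next I would fix an infinite ball $B \in \mathcal P_X$ of ball depth zero; such a ball exists because $X$ is infinite, and the first FSS$^*$ condition ensures that at least one ball of the minimal ball partition is infinite. For this $B$, Theorem \ref{thm:cocycleProj} supplies the cocycle $(b_B, \lambda_{\Gamma/\Gamma_B}, \ell^2(\Gamma/\Gamma_B))$, and crucially, because $\Gamma$ is FSS$^*$ and not merely CSS$^*$, this cocycle is \emph{proper} rather than only unbounded, the properness being inherited from Hughes' metrically proper zipper action in \cite{hughes2009}, which is available precisely when the similarity structure is finite. I would then invoke Theorem \ref{thm:paradoxical}, whose explicit paradoxical decomposition of $\Gamma \curvearrowright \Gamma/\Gamma_B$ shows that $\Gamma_B$ is not co-amenable in $\Gamma$, equivalently that $\lambda_{\Gamma/\Gamma_B}$ is a non-amenable representation. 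I would also record that $b_B$ takes values among the real functions $\chi_{gZ}-\chi_Z$, so it is genuinely an orthogonal cocycle, matching the hypotheses of \cite[Proposition 1.6]{boutonnet2021properly}.

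Assembling these, a proper cocycle into a non-amenable orthogonal representation feeds directly into \cite[Proposition 1.6]{boutonnet2021properly} to conclude that $\Gamma$ is properly proximal. There is no serious obstacle here given the work already done; the one point deserving emphasis, rather than a true difficulty, is that properness of $b_B$ is used essentially. This is exactly why the statement is restricted to FSS$^*$ groups: for a CSS$^*$ group that is not FSS$^*$, Theorem \ref{thm:cocycleProj} yields only an unbounded cocycle, and the argument of \cite[Proposition 1.6]{boutonnet2021properly} does not survive the weakening of properness to unboundedness, as discussed in the remark following Example \ref{ex:RNgroups}.
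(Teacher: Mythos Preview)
Your proposal is correct and follows essentially the same approach as the paper: invoke Theorem \ref{thm:cocycleProj} for a proper cocycle (using the FSS$^*$ hypothesis), Theorem \ref{thm:paradoxical} for non-amenability of $\lambda_{\Gamma/\Gamma_B}$, and then the Boutonnet--Ioana--Peterson criterion. If anything, you are slightly more careful than the paper in explicitly selecting $B$ of ball depth zero so that Theorem \ref{thm:paradoxical} applies directly, and in noting countability and orthogonality.
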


\begin{proof}
In \cite{boutonnet2021properly}, Boutonnet, Ioana, and Peterson show that a group admitting a proper 1-cocylce into a non-amenable representation is properly proximal; hence, combining Theorem \ref{thm:cocycleProj} and Theorem \ref{thm:paradoxical} also proves FSS$^*$ groups are properly proximal. 
\end{proof}

However, the proof of \cite[Theorem 4.7]{boutonnet2021properly} does not generalize to the unbounded 1-cocycle case, because the boundary piece 
$$X:= \{\omega \in \Delta \Gamma \: \vert \: \lim_{g \rightarrow \omega} \|b(g)\| = + \infty \}$$ 
is no longer the entire boundary $\partial\Gamma$ of $\Gamma$ when the cocycle is unbounded, but not proper. Therefore, we do not know if CSS$^*$ groups in their full generality are properly proximal.

To prove that the countable R\"{o}ver-Nekrashevych groups $V_d(G)$, which are properly CSS$^*$ groups, are non-inner amenable, we appeal to how $V_d \leq V_d(G)$ sits as a non-inner amenable subgroup. To proceed, we need to introduce a specific kind of partition of the compact ultrametric space $X$ and an equivalence relation on elements with such partitions.

\begin{definition}
Let $\Gamma \leq \text{LS}(X)$ is a CSS group. We define a \emph{maximum identifying partition}, or MI-partition, $\mathcal P_g$ of $g$ to be a refinement of the maximum partition, as follows. If $B$ is in $\mathcal P_g$ and $g(B) \ne B$, then include $B$ in $\mathcal P_g$. If $B$ is in $\mathcal P_g$ and $g\vert_B = \operatorname{id}_B$, include $B$ in $\mathcal P_g$. If $B$ is in $\mathcal P_g$, $g\vert_B \in \text{Sim}(B,B)$,  and $g\vert_B \ne \operatorname{id}\vert_B$ don't include $B$ but instead take a partition $B = \{D_1, \dots, D_n\}$ so that either $g(D_i) \neq D_i$ and $g(E) =E$ for any ball $E \supseteq D_i$ or $g\vert_{D_i}=\operatorname{id}\vert_{D_i}$ and $g\vert_E \ne \operatorname{id}\vert_E$ for any ball  $E \supseteq D_i$. Notice, that by construction, any $g \in \Gamma$ has a unique MI-partition.
\end{definition}

We can now define an equivalence relation on a CSS group $\Gamma$ based off of the MI-partition. If $\mathcal P_g$ is the MI-partition for $g$, define a new element $\tilde g$ such that $\tilde{g}\vert_B \in \text{Sim}(B,g(B))$ is the unique order preserving similarity for all $B \in \mathcal P_g$. Then we say $g$ and $h$ are \emph{MI-partition equivalent} and write $g \sim_\mathcal P h$, if $\mathcal P_{\tilde{g}} = \mathcal P_{\tilde h}$, where $\mathcal P_{\tilde{g}}$ and $\mathcal P_{\tilde h}$ are both MI-partitions. One can check that this is indeed an equivalence relation. In general, $\mathcal P_g$ need not equal $\mathcal P_{\tilde{g}}$, as a region for $\tilde{g}$ could be larger than a region for $g$. The motivation for the definition of an MI-partition is that $f \sim_\mathcal P\operatorname{id}$ if and only if $f = \operatorname{id}$. Note that $g = h$ implies $g\sim_\mathcal P h$, but the converse is false for a general CSS or CSS$^*$ group. We now define a type of FSS$^*$ group for which the converse does hold.

\begin{definition}
    We say $\Gamma \leq \text{LS}(X)$ is \emph{Local Order Preserving (LOP)} FSS$^*$ group, if whenever $\text{Sim}(B,D)$ is non-empty for two balls $B$ and $D$ in $X$, then the only similarity in $\text{Sim}(B,D)$ is the unique order preserving similarity and hence every $g \in \Gamma$ is a local order preserving similarity.
\end{definition} 
The Higman-Thompson groups $V_d$ are the prototypical example of an LOP FSS$^*$ group, and the groups arising from subshifts of finite type also serve as examples. By the uniqueness of $g\vert_B \in \text{Sim}(B,D)$, it follows that for an LOP FSS$^*$ group $g \sim_\mathcal P h$ if and only if $g =h$.

\begin{remark}
    Given a countable similarity structure where the set $\Sim_{X}(B,D)$ contains a unique order preserving similarity whenever it is non-empty, we can form a sub-similarity structure. This sub-structure will satisfy all the axioms of a similarity structure, and it is clearly finite since we are only selecting the unique order-preserving similarity, so this sub-similarity structure will produce an FSS$^*$ group. This is because the identity map on a closed ball is an order-preserving similarity, the inverse of an order-preserving similarity is also an order-preserving similarity, the composition of order-preserving similarities is an order-preserving similarity, and, finally, the restriction of an order-preserving similarity is an order-preserving similarity. We let $\Sigma$ denote the FSS$^*$ group locally determined by this sub-similarity structure. This will be a LOP FSS$^*$ subgroup of $\Gamma$. In the case of a R\"over Nekrashevych group $V_d(G)$, the LOP FSS$^*$ subgroup is $V_d$. 
\end{remark}

We now prove that all R\"over-Nekrashevych groups are non-inner amenable, which answers a question of the first named author and Zaremsky posed in \cite{bashwinger2022non}. In particular, the following theorem still applies when $G$ is uncountable, even though $V_d(G)$ is no longer a CSS$^*$ group or countable in this setting.

\begin{theorem}\label{thm:RNnoninneramean}
    Let $V_d(G)$ be a R\"over-Nekrashevych group, where $G$ is a self-similar group. Then $V_d(G)$ is non-inner amenable. 
\end{theorem}

\begin{proof}
    By \cite{bashwinger2022non} we know that $V_d \leq V_d(G)$ is non-inner amenable and, in particular, there exists a non-abelian free subgroup $H$ of $T_d$, and hence, of $V_d$ such that for all $g \in V_d \setminus \{1\}$ the centralizer $\mathcal{C}_H(g)$ is amenable. If we view $H$ as a subgroup of $V_d(G)$, then we claim $\mathcal{C}_H(g) \leq V_d(G)$ will be amenable for all $g \in V_d(G)$ as well, thus proving $V_d(G)$ is non-inner amenable. 
    
    \emph{Proof of Claim}: For all nontrivial $g \in V_d \leq V_d(G)$, $\mathcal{C}_H(g)$ is amenable by the cited result for $V_d$. Now suppose $g \in V_d(G) \setminus V_d$, then there is a unique nontrivial element $f \in V_d$ such that $g \sim_\mathcal P f$. We will show $\mathcal{C}_H(g) \subseteq \mathcal{C}_H(f)$. Suppose $h \in \mathcal{C}_H(g)$, so that $hgh^{-1}=g$. We claim that $hfh^{-1}=f$. Indeed, by definition $hgh^{-1}(B)=g(B) = f(B)$ for all $B$ in the MI-partition $\mathcal P_g$. Since conjugating by $h$ preserves the partition the MI-partition $\mathcal P_g$, it will also preserve the MI-partition $\mathcal P_f$. Therefore, we also have $hfh^{-1}(B) = f(B)$. Now we need to show that $hfh^{-1}\vert_B = f\vert_B$ for all $B \in \mathcal P_g$.  By definition of being elements in $V_d$, $h$ and $f$ are both local order preserving. Moreover, $(hfh^{-1})\vert_B$ and $f\vert_B$ are both order preserving similarities in $\text{Sim}(B, g(B))$. Since the order preserving similarity is unique between any two balls, this implies $(hfh^{-1})\vert_B = f\vert_B$. Therefore, $hfh^{-1} = f$. This implies that if $h \in \mathcal{C}_H(g),$ then $h \in \mathcal{C}_H(f)$, so $\mathcal{C}_H(g) \subseteq \mathcal{C}_H(f)$. Since $\mathcal{C}_H(f) \leq V_d$, this implies $\mathcal{C}_H(g)$ must also be amenable. 
\end{proof}

We believe that all CSS$^*$ groups should be non-inner amenable; however, it appears a more detailed analysis is needed to obtain a proof. Approaches like in \cite{chifan2016inner} that work with unbounded cocycles into a non-amenable representation, do not work for CSS$^*$ groups, as we can prove $\Gamma$ is not ICC over $\Gamma_B$. Indeed, we can prove stabilizer subgroups are wq-normal, and moreover, this means the work of Robin Tucker-Drob is not easily applied. Finally, we believe that a natural generalization of the non-amenable subgroup with amenable stabilizers approach from \cite{bashwinger2022non} for $V_d$ should be possible for any CSS$^*$ groups. We leave it for future work to settle this question. 

\section{Prime Factors from CSS Groups}\label{sec:prime}

In this section, we prove the motivating result of this paper, Theorem \ref{thm:primeCSS}, that the group von Neumann algebra for any FSS$^*$ group and any R\"over-Nekrashevych group is a prime II$_1$ factor. In particular, we use Theorem \ref{thm:cocycleProj} and Theorem \ref{thm:noninneramen} to show that FSS$^*$ groups satisfy the assumptions of the following theorem of the second named author, de Santiago, and Khan in \cite{de2023mcduff}. Therefore, this result can be understood as the culmination of our work so far. As a consequence of the ping-pong lemma, we also prove that certain CSS$^*$ groups are $C^*$-simple in Corollary \ref{cor:C*simple}.

 \begin{theorem}(\cite[Theorem 4.10]{de2023mcduff})\label{thm:primevNa}
     Let $G$ be a countable discrete ICC group such that $L(G)$ does not have property Gamma. Suppose there exists an ICC subgroup $H\leq G$ such that $H$ is weakly malnormal and $L(H)$ is weakly bicentralized in $L(G)$. If $G$ admits an unbounded 1-cocycle into the quasi-regular representation $(\lambda_{G/H},\ell^2(G/H))$, then $L(G)$ is prime.
 \end{theorem}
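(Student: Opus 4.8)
The plan is to run Popa's deformation/rigidity machinery in the style of Peterson's solidity theorem, but with the quasi-regular representation $\lambda_{G/H}$ playing the role that the regular representation plays in the solid case; the failure of $\lambda_{G/H}$ to be weakly contained in $\lambda_G$ in general is exactly what degrades the conclusion from solidity to primeness. Concretely, I would first convert the unbounded $1$-cocycle $(b,\lambda_{G/H},\ell^2(G/H))$ into an s-malleable deformation of $M:=L(G)$: fixing a real structure on $\ell^2(G/H)$ and applying the Gaussian (second-quantization) functor as in Peterson and Sinclair yields a trace-preserving inclusion $M\subseteq\tilde M$, a one-parameter group $(\alpha_t)_{t\in\mathbb R}$ of trace-preserving automorphisms of $\tilde M$, and a period-two symmetry, all obeying Popa's transversality inequality. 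The decisive structural feature is that, because the deformation is modeled on $\lambda_{G/H}$, the subalgebras on which $\alpha_t$ converges uniformly are governed by $L(H)$ rather than by an amenable piece.

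Next I would assume for contradiction that $M\cong P\,\bar\otimes\,Q$ with $P$ and $Q$ both II$_1$ factors. Since property Gamma passes to tensor products, the non-Gamma hypothesis on $M$ forces both $P$ and $Q$ to be full, so Popa's spectral gap rigidity is available for each tensor factor. The spectral gap dichotomy then forces $\alpha_t\to\mathrm{id}$ uniformly on the unit ball of at least one factor, say $(Q)_1$ --- otherwise one extracts a nontrivial central sequence contradicting fullness. Combining this uniform convergence with the transversality inequality and Popa's intertwining-by-bimodules criterion, I would conclude that $Q$ intertwines into the rigid part of the deformation, that is, $Q\prec_M L(H)$.

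To finish, I would invoke the group-theoretic hypotheses. Weak malnormality of $H$ controls the quasi-normalizer of $L(H)$ inside $M$, while the weak-bicentralizer hypothesis on $L(H)\subseteq L(G)$ guarantees that the relative commutant of any subalgebra intertwining into $L(H)$ is again essentially contained in $L(H)$. Applied to $Q\prec_M L(H)$, this collapses $P=Q'\cap M$ onto an atomic algebra, contradicting that $P$ is a II$_1$ factor; hence no such tensor decomposition exists and $L(G)$ is prime.

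The main obstacle is the step passing from uniform convergence of $\alpha_t$ on a tensor factor to the intertwining $Q\prec_M L(H)$. In Peterson's solid case the regular representation supplies the compactness that immediately yields amenability of the relative commutant; here, with only an unbounded --- not proper --- cocycle into the quasi-regular representation, that compactness is unavailable, and one must instead exploit the precise weak-containment structure of $\lambda_{G/H}$, combining transversality with a relative-amenability analysis to route the rigidity through $L(H)$ before weak malnormality can close the argument.
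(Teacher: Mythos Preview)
This theorem is not proved in the present paper: it is quoted verbatim from \cite{de2023mcduff} (their Theorem~4.10) and used here as a black box to deduce Theorem~\ref{thm:primeCSS}. There is therefore no ``paper's own proof'' to compare your proposal against; the authors' contribution in Section~\ref{sec:prime} consists entirely of verifying that CSS$^*$ groups satisfy the hypotheses (ICC, non-Gamma, weak malnormality of $\Gamma_B$, weak bicentralization of $L(\Gamma_B)$, and the unbounded cocycle into $\lambda_{\Gamma/\Gamma_B}$), then invoking the cited result.

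That said, your outline is the right shape for how the theorem is actually established in \cite{de2023mcduff}, and it matches the description given in the introduction of the present paper: the result is framed there as a generalization of Peterson's $L^2$-rigidity/solidity theorem \cite{peterson2009}, with the quasi-regular representation replacing the regular one, which is precisely the substitution you describe. Your identification of the key difficulty---that an unbounded (rather than proper) cocycle into $\lambda_{G/H}$ no longer yields the compactness that would give amenability of the relative commutant, so one must route through $L(H)$ via intertwining and then use weak malnormality and weak bicentralization to close the argument---is exactly the point the authors emphasize when they say the conclusion degrades from solidity to primeness. One small caution: the passage from $Q\prec_M L(H)$ to a contradiction is more delicate than ``collapses $P$ onto an atomic algebra''; in \cite{de2023mcduff} the weak bicentralization hypothesis is used at the level of bimodule weak containment rather than directly on relative commutants, and weak malnormality enters to control normalizers after intertwining. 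Your sketch gestures at this but would need those steps made precise to constitute a proof.
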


The condition of $L(H)$ being weakly bicentralized in $L(G)$ is really about weak containment of bimodules, and is used to great effect in the proof of the previous theorem. However, if $N \subset M$ is a containment of II$_1$ factors, then $N$ being equal to its own relative bicommutant, that is, $(N ' \cap M )' \cap M = N$, is sufficient for $N$ to be weakly bicentralized in $M$ by \cite[Lemma 3.4]{BannonFull2020}. Historically, $N$ is said to be \emph{normal} in $M$. To understand when $L(\Gamma_B)$ is normal in $L(\Gamma)$, we recall some facts that relate relative commutants to subgroups. Recall, the centralizer of a subgroup $H$ in $G$ is
\[\mathcal{C}_G(H):=\{g\in G: h^{-1}gh=g\, \forall h\in H\},\]
while the virtual centralizer of $H$ in $G$ is defined as
\[\mathcal{VC}_G(H) := \{g\in G : \vert\{h^{-1}gh: h \in H\}\vert < \infty \}.\] 
In general, the commutant of $L(H)$ inside $L(G)$ is given by the following
\[L(H)'\cap L(G)= \left\{ \sum_{g\in G }c_g \lambda_g\in L(G) :  g\in \mathcal{VC}_G(H),\,  c_{g}=c_{h^{-1}gh}\,\forall g\in G,h\in H\right\}.\]
However, when $\mathcal{C}_G(H) = \mathcal{VC}_G(H)$ we know that $L(H)'\cap L(G) = L(\mathcal{C}_G(H))$. The goal now is to show that the centralizer and virtual centralizer agree for $\Gamma_B$ in $\Gamma$. We fix the notation $g^{\Gamma_B} :=\{h^{-1}gh : h \in \Gamma_B \}$.

\begin{lemma}\label{lemma:wkbicentral}
    Let $\Gamma \leq \text{LS}(X)$ be a CSS$^*$ group and $B \subset X$ any proper closed ball. Then for the stabilizer subgroup $\Gamma_B$ of $ \Gamma$, the following hold:
    \begin{enumerate}
        \item \label{vc:1}$\mathcal{C}_\Gamma(\Gamma_B)  = \mathcal{VC}_\Gamma(\Gamma_B) = \Gamma_{X\backslash B}$ and 
        \item \label{vc:2}$\mathcal{C}_\Gamma(\Gamma_{X\backslash B})  = \mathcal{VC}_\Gamma(\Gamma_{X\backslash B}) = \Gamma_B$.
    \end{enumerate}
    In particular, $\big(L(\Gamma_B)'\cap L(\Gamma)\big)'\cap L(\Gamma) = L(\Gamma_B)$.
\end{lemma}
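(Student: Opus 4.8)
The plan is to deduce the bicommutant identity from parts (1) and (2) by two applications of the displayed formula for $L(H)'\cap L(G)$. Since part (1) gives $\mathcal{C}_\Gamma(\Gamma_B)=\mathcal{VC}_\Gamma(\Gamma_B)=\Gamma_{X\setminus B}$, the stated fact that $L(H)'\cap L(G)=L(\mathcal{C}_G(H))$ whenever $\mathcal{C}_G(H)=\mathcal{VC}_G(H)$ yields $L(\Gamma_B)'\cap L(\Gamma)=L(\Gamma_{X\setminus B})$. Applying the same fact to $\Gamma_{X\setminus B}$ via part (2) gives $L(\Gamma_{X\setminus B})'\cap L(\Gamma)=L(\Gamma_B)$, and composing the two identities produces $\big(L(\Gamma_B)'\cap L(\Gamma)\big)'\cap L(\Gamma)=L(\Gamma_B)$. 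Thus everything reduces to the two group-theoretic identities (1) and (2), and in each of those the only nontrivial containment is the virtual-centralizer one.

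For the easy inclusions I would argue by disjoint supports. If $g\in\Gamma_{X\setminus B}$ then $\supp(g)\subseteq B$, and since $g$ is a bijection fixing $X\setminus B$ pointwise we have $g(B)=B$; meanwhile every $h\in\Gamma_B$ satisfies $\supp(h)\subseteq X\setminus B$. A direct check on $B$ and on $X\setminus B$ separately shows $gh=hg$, so $\Gamma_{X\setminus B}\subseteq\mathcal{C}_\Gamma(\Gamma_B)\subseteq\mathcal{VC}_\Gamma(\Gamma_B)$, the last containment being automatic. The symmetric computation gives $\Gamma_B\subseteq\mathcal{C}_\Gamma(\Gamma_{X\setminus B})\subseteq\mathcal{VC}_\Gamma(\Gamma_{X\setminus B})$.

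The substance is the reverse inclusion $\mathcal{VC}_\Gamma(\Gamma_B)\subseteq\Gamma_{X\setminus B}$, which I would prove by contraposition, reusing the conjugation machinery of Theorem \ref{thm:icc} and Corollary \ref{cor:iccsubgroup}. Suppose $g\notin\Gamma_{X\setminus B}$, so $g(x)\neq x$ for some $x\in X\setminus B$; set $z:=g(x)$. As $X\setminus B$ is open, choose a ball $A\subseteq X\setminus B$ with $x\in A$ and $z\notin A$. Since $\Gamma$ is CSS$^*$, fix an infinite ball $E\subseteq X\setminus B$ disjoint from $A\cup\{z\}$ and, by the second condition of Definition \ref{def:CSS*}, produce pairwise disjoint subballs $D_n\subseteq E$ with $\text{Sim}(A,D_n)\neq\emptyset$; Corollary \ref{cor:extension} then upgrades each of these to an involution $h_n\in\Gamma$ swapping $A$ with $D_n$ and acting as the identity elsewhere. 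By construction $\supp(h_n)\subseteq X\setminus B$, so $h_n\in\Gamma_B$, each $h_n$ fixes $z$, and the points $y_n:=h_n(x)\in D_n$ are distinct. The computation of Theorem \ref{thm:icc} then gives $h_n^{-1}gh_n(y_n)=h_n^{-1}g(x)=h_n^{-1}(z)=z$, so the conjugates $h_n^{-1}gh_n$ send distinct points to the common value $z$ and are therefore pairwise distinct. Hence $g\notin\mathcal{VC}_\Gamma(\Gamma_B)$, which establishes (1); part (2) follows by the identical argument with the roles of $B$ and $X\setminus B$ interchanged.

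The main obstacle is exactly this virtual-centralizer containment: one must manufacture infinitely many conjugating involutions living inside $\Gamma_B$ (equivalently, supported off $B$), which is precisely where the two CSS$^*$ conditions enter, guaranteeing enough disjoint similar copies of $A$ inside $X\setminus B$. I note that the argument needs $X\setminus B$ (and, for part (2), $B$) to contain an infinite ball; otherwise $\Gamma_B$ may be finite and its virtual centralizer degenerates to all of $\Gamma$. I would therefore either restrict to the infinite balls $B$ relevant to the cocycle constructions of Section \ref{sec:cocycle}, or record that the CSS$^*$ homogeneity forces this richness in the setting where the lemma is applied.
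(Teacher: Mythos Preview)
Your argument is correct and follows the same template as the paper: reduce the bicommutant identity to the two group-theoretic equalities via the $\mathcal{C}_G(H)=\mathcal{VC}_G(H)$ criterion, handle the easy inclusions by disjoint supports, and for the hard inclusion manufacture infinitely many distinct $\Gamma_B$-conjugates of any $g\notin\Gamma_{X\setminus B}$ by swapping a small ball $A\ni x$ with disjoint similar copies inside $X\setminus B$, exactly as in Theorem~\ref{thm:icc} and Corollary~\ref{cor:iccsubgroup}.

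The one genuine difference is that the paper splits into two cases according to whether $g(X\setminus B)=B$: in the first it simply invokes Corollary~\ref{cor:iccsubgroup}, while in the second (where $g(x)\in B$ for every $x\in X\setminus B$) it uses a nested family $D_1\supset D_2\supset\cdots$ and compares the images $g(D_n)$. Your construction avoids this case split entirely, because your involutions $h_n$ have support in $A\cup D_n\subseteq X\setminus B$ and therefore fix $z=g(x)$ regardless of whether $z$ lands in $B$ or in $X\setminus B$; the distinctness of the conjugates then follows from the single injectivity argument. This is a small but genuine streamlining. Your closing caveat that the argument needs $X\setminus B$ (respectively $B$) to contain an infinite ball is well placed; the paper's proof carries the same implicit hypothesis (its Case~2 begins ``Let $A,D_1$ be infinite disjoint balls in $X\setminus B$''), and in the application to Theorem~\ref{thm:primeCSS} only such $B$ arise.
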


\begin{proof}
    We first claim that $\mathcal{C}_\Gamma(\Gamma_B) = \Gamma_{X\backslash B}$. Indeed, if $g\vert_{X \backslash B} =\operatorname{id}$, then g commutes with $\Gamma_B$. Conversely, suppose $g \in \Gamma$ and $gf=fg$ for all $f \in \Gamma_B$ and $g\vert_{X \backslash B} \neq \operatorname{id}$. Then there is some $D \subseteq X \setminus B$ such that $g(D) \cap D = \emptyset$. Pick $f \in \Gamma_B$ such that $f\vert_{g(D)} = \operatorname{id}$ and $f(D) =E$ with $E \cap (D\cup g(D)) = \emptyset$. Necessarily, $E \subseteq X \setminus B$. By assumption, $fg(D) =gf(D)$, but $fg(D) =g(D)$, while $gf(D) = g(E) \neq g(D)$, since $E \cap D = \emptyset$. This is a contradiction, so we can conclude $g\vert_{X \backslash B} = \operatorname{id}$. 
 
    Now to prove $\mathcal{C}_\Gamma(\Gamma_B)  = \mathcal{VC}_\Gamma(\Gamma_B)$, it is enough to show that for all $g \in \Gamma \backslash \mathcal{C}_\Gamma(\Gamma_B)$, $ \vert g^{\Gamma_B}\vert = \infty$. We can achieve this with a variation of Theorem \ref{thm:icc}. First, suppose $g \in \Gamma \backslash \mathcal C_\Gamma(\Gamma_B)$ and $g(X\backslash B) \neq B$. Then there exists an $x \in X \backslash B$ such that  $g\vert_{X\backslash B} \neq \operatorname{id}$ and $g(x) \not \in B$. Then the proof of Corollary \ref{cor:iccsubgroup} applies here and importantly, $\{h_n\}_{n \in \mathbb N} \in \Gamma_B$, where $\{h_n^{-1}gh_n\}_{n \in \mathbb N}$ is the sequence of distinct elements. Therefore, $\vert g^{\Gamma_B}\vert = \infty$. 

    The second case to consider is if $g \in \Gamma \backslash \mathcal C_\Gamma(\Gamma_B)$ and $g(X \backslash B) = B$, resulting in $g^2=\operatorname{id}$. Let $A, D_1$ be infinite disjoint balls in $X \backslash B$. Choose $f_1 \in \text{Sim}(A,D_1)$, replace $D_1$ with a subball if necessary, and extend $f_1$ to be an element of $\Gamma$, as in Corollary \ref{cor:extension}. Note that $f_1 \in \Gamma_B$. Choose $D_2 \subset D_1$ and $f_2 \in \text{Sim}(A,D_2)$, again replacing $D_2$ with a subball if necessary and extending $f_2$ to be an element of $\Gamma$. Continue this process for all $n \in \mathbb N$. We claim $f_n^{-1}gf_n \neq f_m^{-1}gf_m$ for $n \neq m$. Indeed, observe that $f_n^{-1}gf_n(A) = g(D_n)$. Suppose now that $g(D_n) = g(D_m)$. Then since $g^2=\operatorname{id}$, this implies $D_n = D_m$, which is a contradiction. Therefore, $\{f_n^{-1}gf_n\}_{n \in \mathbb N}$ is the sequence of distinct elements in $\Gamma_B$ and $\vert g^{\Gamma_B}\vert = \infty$.

    The proof of \eqref{vc:2} follows from a symmetric argument. We also see that \eqref{vc:1} implies $L(\Gamma_B)'\cap L(\Gamma) = L(\Gamma_{X \backslash B})$ and \eqref{vc:2} implies 
    $\big(L(\Gamma_B)'\cap L(\Gamma)\big)'\cap L(\Gamma) = L(\Gamma_B)$.

\end{proof}

Recall that a subgroup $H$ of $G$ is said to be \textit{weakly malnormal} if there exists $g_1, \dots, g_n \in G \backslash H$ such that $\vert\cap_{i=1}^{n} (H\cap g_i^{-1}Hg_i)\vert < \infty$.

\begin{lemma}\label{lemma:wkmalnormal}
    Let $\Gamma \leq \text{LS}(X)$ be a CSS$^*$ group and $B \subset X$ any proper infinite closed ball. Then the stabilizer subgroup $\Gamma_B$ is weakly malnormal in $\Gamma$.
\end{lemma}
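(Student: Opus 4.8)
The plan is to reduce weak malnormality to a covering statement about $X$. The key starting observation is that conjugation carries pointwise stabilizers to pointwise stabilizers of images: for any $g \in \Gamma$ one has $g^{-1}\Gamma_B g = \Gamma_{g^{-1}(B)}$, since $\mathrm{Fix}(g^{-1}hg) = g^{-1}(\mathrm{Fix}(h))$ and $g^{-1}(B)$ is again clopen (a finite union of balls). Consequently, for any finite collection $g_1,\dots,g_n \in \Gamma$,
\[
\bigcap_{i=1}^n \big(\Gamma_B \cap g_i^{-1}\Gamma_B g_i\big) = \Gamma_W, \qquad W := B \cup \bigcup_{i=1}^n g_i^{-1}(B),
\]
the subgroup of elements fixing $W$ pointwise. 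So it suffices to choose finitely many $g_i \in \Gamma \setminus \Gamma_B$ for which $W$ is cofinite in $X$.

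Next I would record exactly when $\Gamma_W$ is finite. An element of $\Gamma_W$ is the identity on $W$, hence is completely determined by the permutation it induces on $X \setminus W$; thus $\Gamma_W$ embeds into the symmetric group on $X\setminus W$. If $X\setminus W$ is finite this forces $|\Gamma_W| < \infty$, whereas if $X\setminus W$ contained an infinite ball, the CSS$^*$ property (as in Corollary \ref{cor:extension} and the constructions of Theorem \ref{thm:icc}) would produce infinitely many elements supported there. So the goal becomes arranging $X \setminus W$ to be finite.

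The main construction, and the place where the CSS$^*$ hypothesis does the real work, is this covering step; here I take $B$ to be an infinite ball, which is precisely the case needed for the prime-factor application. Decompose the complement into a partition $X\setminus B = B_1 \sqcup \cdots \sqcup B_m$ of closed balls. For each infinite $B_j$, I would apply Corollary \ref{cor:extension} with the arbitrary ball $B_j$ and the infinite ball $B$ (which is disjoint from $B_j$) to obtain a subball $D_j \subseteq B$ and an involution $g_j \in \Gamma$ with $g_j(B_j) = D_j$ and $g_j(D_j) = B_j$. Then $B_j = g_j^{-1}(D_j) \subseteq g_j^{-1}(B)$, and since $g_j$ carries $D_j \subseteq B$ off of $B$ it is not the identity on $B$, so $g_j \in \Gamma\setminus\Gamma_B$. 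Taking these $g_j$ over all infinite $B_j$ yields $W \supseteq B \cup \bigcup_{B_j \text{ infinite}} B_j$, so $X\setminus W$ is contained in the union of the finite balls $B_j$, which is a finite set.

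Combining the three steps, $\bigcap_i (\Gamma_B \cap g_i^{-1}\Gamma_B g_i) = \Gamma_W$ with $X\setminus W$ finite, so this intersection is finite, witnessing that $\Gamma_B$ is weakly malnormal. The delicate point is entirely in the covering step: it is essential that $B$ be infinite so that the second CSS$^*$ condition lets us push each infinite complementary ball completely inside $B$. The finite balls in $X\setminus B$ cannot be covered this way, but they are harmless precisely because they contribute only finitely many points to $X\setminus W$.
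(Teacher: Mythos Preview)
Your proof is correct and rests on the same core idea as the paper's --- use the CSS$^*$ property to move the complementary balls of $B$ into $B$ via elements of $\Gamma$ --- but the execution differs in a tactical way that is worth noting. You first make the clean reduction $\bigcap_i(\Gamma_B\cap g_i^{-1}\Gamma_B g_i)=\Gamma_W$ with $W=B\cup\bigcup_i g_i^{-1}(B)$, and then produce one involution $g_j$ for each \emph{infinite} complementary ball $B_j$, leaving the finite complementary balls uncovered and arguing that $X\setminus W$ is therefore finite, whence $\Gamma_W$ is finite. The paper instead packages all of the complementary balls $B_2,\dots,B_n$ (finite or infinite) into a \emph{single} element $g$ by choosing pairwise disjoint subballs $D_1,\dots,D_{n-1}\subset B$ and similarities $g_i\in\Sim(B_{i+1},D_i)$, then pasting; this yields the stronger conclusion $\Gamma_B\cap g^{-1}\Gamma_B g=\{1\}$ with just one conjugator. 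Your covering viewpoint is arguably more transparent conceptually, while the paper's single-element construction is sharper and handles the finite complementary balls uniformly rather than as a residual case. Both approaches implicitly restrict to $B$ infinite, exactly as the paper does in its proof.
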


\begin{proof}
    Let $B \subset X$ be any proper infinite ball in $X$. Pick a partition $\{B_1, \dots, B_n \}$ of $X$ containing $B$ and for convenience, let $B_1 := B$. Then pick $D_1, \dots , D_{n-1}$ pairwise disjoint balls in $B_1$. Moving to subballs if necessary, choose $g_i \in \text{Sim}(B_{i+1}, D_i)$ for $1 \leq i \leq n-1$. Let
    \[ g(x) = \begin{cases}
        g_i(x), & x \in B_{i+1} \\
        g_i^{-1}(x), & x \in D_i \\
        x, & x \in X \setminus \{B_2, \dots, B_{n}, D_1, \dots, D_{n-1} \},
    \end{cases} \]
    which by the pasting lemma (Lemma \ref{lemma:paste}) is an element of $\Gamma$. We now claim $\Gamma_{B_1} \cap g^{-1}\Gamma_{B_1}g = \{ 1\}$. Indeed, choose $f \in \Gamma_{B_1} \cap g^{-1}\Gamma_{B_1}g$. Then $f=g^{-1}hg$ for some $h \in \Gamma_{B_1}$.  If $C \subseteq B_1$, then $f\vert_C = \operatorname{id}$, because $f \in \Gamma_{B_1}$. Next, we use the fact that $D_i \subset B_1$ for all $1 \leq i \leq n-1$ and that $h\vert_{B_1}= \operatorname{id}$ to see that
    \begin{align*}
        f(B_{i+1}) = g^{-1}hg(B_{i+1}) &= g^{-1}hg_i(B_{i+1}) \\
        &= g^{-1}h(D_i) \\
        &= g^{-1}(D_i) =g_i^{-1}(D_i) = B_{i+1}
    \end{align*}
    and moreover, $f\vert_{B_{i+1}} = (g^{-1}hg)\vert_{B_{i+1}} =\operatorname{id}$ for $1 \leq i \leq n-1$. So $f = \operatorname{id}$ and $\Gamma_{B_1} \cap g^{-1}\Gamma_{B_1}g = \{ 1\}$. Since $B_1 = B$ was arbitrary, we conclude that $\Gamma_B$ is weakly malnormal in $\Gamma$ for any proper infinite ball $B \subset X$.

\end{proof}

Theorem \ref{thm:paradoxical} proves that all CSS$^*$ groups are non-amenable, but we will still need to have control over specific non-abelian free subgroup inside $\Gamma$ to prove $L(\Gamma)$ is not solid. Therefore, we will need the following version of the ping-pong lemma from \cite{DrutuKapovich2018}.

\begin{lemma}(\cite[Lemma 7.60]{DrutuKapovich2018})\label{lemma:pingpong}
    Let $\text{Bij}(X)$ be the set of bijections on a set $X$. Choose four non-empty disjoint sets $B_i^\pm \subset X$, $i=1,2$. Define  $C^+_1 := B_1^+\cup B_2^-\cup B_2^+$,  $C^+_2 := B_2^+\cup B_1^-\cup B_1^+$, $C^-_1 := B_1^-\cup B_2^-\cup B_2^+$, and $C^-_2 := B_2^-\cup B_1^-\cup B_1^+$. Suppose that
    $$g_1(C^+_1) \subseteq B^+_1,\quad g_1^{-1}(C_1^-) \subseteq B^-_1, \quad g_2(C^+_2) \subseteq B^+_2, \quad g_2^{-1}(C_2^-) \subseteq B^-_2. $$
    Then the bijections $g_1, g_2$ generate a rank 2 free subgroup of $Bij(X)$. 
\end{lemma}

We can now prove that CSS$^*$ groups are replete with copies of $\mathbb F_2$. 

\begin{lemma}\label{lemma:freesubgroup} 
    Let $\Gamma \leq \text{LS}(X)$ be a CSS$^*$ group. Then $\Gamma$ contains subgroups $K$ isomorphic to $\mathbb F_2$. More precisely, for a stabilizer subgroup $\Gamma_U$, where $U$ is an infinite proper clopen subset of $X$, then $K$ can be chosen in $\Gamma$ such that (1) $K \cap \Gamma_U = \{1\}$ and  $[K,\Gamma_U] = \{1\}$. If we further assume that $X \setminus U$ is infinite, then $K$ can be chosen in $\Gamma$ such that (2) $K \leq \Gamma_U$.
     
\end{lemma}

\begin{proof}
    We will show the above version of the Ping-Pong lemma (Lemma \ref{lemma:pingpong}) can be used for CSS$^*$ groups. Pick four disjoint infinite balls in $X$ and label them as $B_i^\pm \subset X$, $i=1,2$. Then choose three disjoint infinite balls inside each $B_i^\pm$. Label them $B_{i,a}^\pm, B_{i,b}^\pm, B_{i,c}^\pm$. Now choose the following similarities, noting that we may have to pass to subballs of $B_{i,a}^\pm, B_{i,b}^\pm, B_{i,c}^\pm$, to guarantee the following sets are non-empty: 

    \begin{align*}
        h_1 &\in \text{Sim}(B^+_1,B_{1,a}^+), \quad g_1 \in \text{Sim}(B^-_1,B_{2,a}^+)\\ 
        h_2 &\in \text{Sim}(B^-_2,B_{1,b}^+), \quad g_2 \in \text{Sim}(B^+_1,B_{2,b}^+) \\ 
        h_3 &\in \text{Sim}(B^+_2,B_{1,c}^+), \quad g_3 \in \text{Sim}(B^+_2,B_{2,c}^+) \\
        H_1 &\in \text{Sim}(B^-_1,B_{1,a}^-), \quad G_1 \in \text{Sim}(B_1^-,B_{2,a}^-) \\
        H_2 &\in \text{Sim}(B^-_2, B_{1,b}^-), \quad G_2 \in \text{Sim}(B_1^+,B_{2,b}^-)\\
        H_3 &\in  \text{Sim}(B^+_2,B_{1,c}^-), \quad G_3 \in \text{Sim}(B_2^-,B_{2,c}^-)
    \end{align*}
Then, by the extension property (Proposition \ref{prop:extension}), we can define $h$ and $g$ elements of $\Gamma$, such that 
\begin{align*}
       h\vert_{B^+_1} = h_1,& \quad  g\vert_{B^-_1} = g_1\\ 
        h\vert_{B^-_2} = h_2,& \quad g\vert_{B^+_1} = g_2 \\
        h\vert_{B^+_2} = h_3,& \quad g\vert_{B^+_2} = g_3  \\
        h\vert_{B_{1,a}^-} = H^{-1}_1,& \quad g\vert_{B_{2,a}^-} = G^{-1}_1\\
        h\vert_{B_{1,b}^-} = H^{-1}_2,& \quad g\vert_{B_{2,b}^-} = G^{-1}_2 \\
        h\vert_{B_{1,c}^-} = H^{-1}_3,& \quad g\vert_{B_{2,c}^-} = G^{-1}_3. \\
    \end{align*}
In particular, 
\begin{align*}
    h(C_1^+) = h(B^+_1\cup B^-_2 \cup B^+_2) &= B_{1,a}^+ \cup B_{1,b}^+ \cup B_{1,c}^+ \subseteq B^+_1 \\
    h^{-1}(C_1^-) = h^{-1}(B^-_1\cup B^-_2 \cup B^+_2) &= H_1(B^-_1) \cup H_2(B^-_2) \cup H_3(B^+_2) \\
    &= B_{1,a}^- \cup B_{1,b}^- \cup B_{1,c}^- \subseteq B^-_1 
\end{align*}   
and
\begin{align*}
   g(C_2^+)= g(B^+_2\cup B^-_1 \cup B^+_1) &= B_{2,a}^+ \cup B_{2,b}^+ \cup B_{2,c}^+ \subseteq B^+_2 \\
    g^{-1}(C_2^-) =g^{-1}(B^-_1 \cup B^+_1 \cup B^-_2) &= G_1(B^-_1) \cup G_2(B^+_1) \cup G_3(B^-_2) \\
    &= B_{2,a}^- \cup B_{2,b}^- \cup B_{2,c}^- \subseteq B^-_2. 
\end{align*} 

Therefore, by Lemma \ref{lemma:pingpong}, we see that $K : =\langle h, g \rangle \leq \Gamma$ is a rank two free group. To prove (1), choose $B_i^\pm$, $\{i=1,2\}$, to be inside $U$. Then, by definition, both $g$ and $h$ are the identity on $X \setminus U$. Since every $f \in \Gamma_U$ is the identity on $U$, this means $K  \cap \Gamma_U = \{1\}$ and moreover, $\Gamma_U$ and $K$ commute.

To prove (2), choose $B_i^\pm$, $\{i=1,2\}$ to be disjoint from $U$, which is possible because $X \setminus U$ is infinite. By construction $g$ and $h$ are the identity on $U$, so $K \subset \Gamma_U$. 
    
\end{proof}

The previous lemma, together with \cite[Corollary 1.3]{LeBoudecBon2018} actually immediately tells us that any CSS$^*$ group $\Gamma \leq \text{LS}(X)$ such that $X$ does not contain finite balls is $C^*$-simple. 

\begin{corollary}[Theorem \ref{mainthm:C*simple}]\label{cor:C*simple}
    Let $\Gamma \leq \text{LS}(X)$ be a CSS$^*$ group such that $X$ does not contain finite balls. Then $\Gamma$ is $C^*$-simple. 
\end{corollary}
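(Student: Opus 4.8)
The plan is to deduce the statement directly from the $C^*$-simplicity criterion of Le Boudec and Matte Bon, \cite[Corollary 1.3]{LeBoudecBon2018}, which guarantees $C^*$-simplicity of a countable group admitting a faithful minimal action by homeomorphisms on the Cantor set, provided the rigid stabilizer of every non-empty open subset is non-amenable. Thus the proof reduces to verifying these hypotheses for the canonical action $\Gamma \curvearrowright X$. First I would note that, since $X$ is a compact ultrametric space with no finite balls, it has no isolated points; being also perfect, compact, totally disconnected and metrizable, it is homeomorphic to the Cantor set. Faithfulness is immediate from $\Gamma \leq \text{LS}(X) \leq \text{Homeo}(X)$, and minimality is a short consequence of the defining properties: given $x \in X$ lying in some member $B_i$ of the minimal ball partition $\mathcal{P}_X$ and any non-empty open $U$, choose a ball $D \subseteq U$, which is infinite since $X$ has no finite balls, so the second CSS$^*$ condition furnishes $D_0 \subseteq D$ with $\text{Sim}(B_i,D_0) \neq \emptyset$; the Extension Property (Proposition \ref{prop:extension}) then promotes a choice $h \in \text{Sim}(B_i,D_0)$ to an element $g \in \Gamma$ with $g|_{B_i} = h$, whence $g(x) \in D_0 \subseteq U$ and every orbit is dense.

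The heart of the argument is the non-amenability of rigid stabilizers, and this is precisely where the hypothesis ``no finite balls'' meets Lemma \ref{lemma:freesubgroup}(2). In the paper's convention $\Gamma_U = \{g : g|_U = \text{id}\}$, so the rigid stabilizer of an open set $V$, namely the subgroup of elements supported in $V$, is $\Gamma_{X \setminus V}$. Since rigid stabilizers are monotone in $V$ and the clopen balls form a basis for the topology, it suffices to treat non-empty clopen $V$. Given such a $V$, I would set $U := X \setminus V$, which is clopen with $X \setminus U = V$ infinite (as $X$ has no finite balls). Lemma \ref{lemma:freesubgroup}(2) then produces $K \cong \mathbb{F}_2$ with $K \subseteq \Gamma_U = \Gamma_{X \setminus V}$, so the rigid stabilizer of $V$ contains a non-abelian free group and is non-amenable; for the whole space one may instead invoke the non-amenability coming from Theorem \ref{thm:paradoxical}. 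With faithfulness, minimality, and the non-amenable rigid stabilizer condition all in hand, \cite[Corollary 1.3]{LeBoudecBon2018} yields that $\Gamma$ is $C^*$-simple.

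The only genuinely delicate point is bookkeeping: correctly translating the paper's stabilizer notation $\Gamma_U$ into the rigid-stabilizer language of \cite{LeBoudecBon2018}, and confirming that the clopen set $U = X \setminus V$ satisfies the exact hypotheses of Lemma \ref{lemma:freesubgroup}(2) (clopen, with infinite complement). Once this translation is pinned down, each step is immediate from results already established earlier in the paper, and the corollary follows with essentially no further computation. If the cited criterion is stated without an explicit minimality requirement, the minimality verification above can simply be omitted; I include it since boundary-theoretic $C^*$-simplicity criteria of this kind typically assume minimality or a proximality-type condition.
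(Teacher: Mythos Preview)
Your proposal is correct and follows essentially the same approach as the paper: the paper's proof is the one-line remark that ``the previous lemma together with \cite[Corollary 1.3]{LeBoudecBon2018}'' gives $C^*$-simplicity, and you have simply unpacked this by explicitly verifying the hypotheses of that criterion (Cantor space, faithfulness, non-amenable rigid stabilizers via Lemma~\ref{lemma:freesubgroup}(2)). Your added minimality check is harmless extra care; the cited corollary of Le Boudec--Matte Bon does not require it.
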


In particular, this recovers the result of \cite{LeBoudecBon2018} that $V$ and $V(G)$ are $C^*$-simple, and extends it to include $V_{d,r}$ from Example \ref{ex:Vdr}, $V_d(G)$ from Example \ref{ex:RNgroups}, and the CSS$^*$ groups arising from irreducible subshifts of finite type from Section \ref{sec:SFT}. To the best of our knowledge, it was not explicitly in the literature that the last three examples are $C^*$-simple. 

Notice that if $X$ does contain finite balls, then there exist clopen sets $U$ such that $X \setminus U$ is finite, and thus the rigid stabilizer $\Gamma_U$, can be a nontrivial finite (hence amenable) group, meaning the result of \cite{LeBoudecBon2018} is no longer applicable. Moreover, in Theorem \ref{thm:amenSubgroup} we exhibit a normal amenable subgroup of any CSS$^*$ group which acts on a compact ultrametric space containing finite balls. This implies that such CSS$^*$ groups cannot be C$^*$-simple.

We end this section by proving the motivating result of this paper. 

\begin{theorem}[Theorem \ref{mainthm:PrimeCSS}]\label{thm:primeCSS}
    Let $\Gamma \leq \text{LS}(X)$ be an FSS$^*$ group, or a countable R\"over-Nekrashevych group $V_d(G)$. Then $L(\Gamma)$ is a prime II$_1$ factor, but not solid.
\end{theorem}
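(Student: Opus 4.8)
The plan is to verify that $\Gamma$, together with the subgroup $H = \Gamma_B$ for a fixed proper infinite ball $B \subset X$, satisfies every hypothesis of Theorem \ref{thm:primevNa}, and then to separately exhibit a diffuse subalgebra of $L(\Gamma)$ with non-amenable relative commutant in order to defeat solidity. Every ingredient has been prepared in the preceding sections, so the argument is essentially an assembly of those results.

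First I would record that $\Gamma$ is ICC by Theorem \ref{thm:icc}, so that $L(\Gamma)$ is a II$_1$ factor, and that $L(\Gamma)$ fails property Gamma: by Theorem \ref{thm:noninneramen} the group $\Gamma$ is non-inner amenable, and since property Gamma for $L(\Gamma)$ would force $\Gamma$ to be inner amenable, this is impossible. Next, taking $H = \Gamma_B$, I would check the three conditions on $H$. The subgroup $\Gamma_B$ is ICC by Corollary \ref{cor:iccsubgroup} (applied to the proper closed set $B$), so $L(\Gamma_B)$ is itself a II$_1$ factor; it is weakly malnormal in $\Gamma$ by Lemma \ref{lemma:wkmalnormal}; and by Lemma \ref{lemma:wkbicentral} the inclusion satisfies $(L(\Gamma_B)' \cap L(\Gamma))' \cap L(\Gamma) = L(\Gamma_B)$, i.e. $L(\Gamma_B)$ is normal in $L(\Gamma)$, whence $L(\Gamma_B)$ is weakly bicentralized in $L(\Gamma)$ by \cite[Lemma 3.4]{BannonFull2020}. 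Finally, Theorem \ref{thm:cocycleProj} supplies the required unbounded $1$-cocycle into the quasi-regular representation $(\lambda_{\Gamma/\Gamma_B}, \ell^2(\Gamma/\Gamma_B))$. With all hypotheses in place, Theorem \ref{thm:primevNa} yields that $L(\Gamma)$ is prime.

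For non-solidity I would use Lemma \ref{lemma:freesubgroup}. Fix a clopen set $U \subset X$ with $X \setminus U$ infinite. By part (2) of that lemma, $\Gamma_U$ contains a copy of $\mathbb{F}_2$ and is therefore non-amenable, so $L(\Gamma_U)$ is a non-amenable subalgebra of $L(\Gamma)$. By part (1), there is a further free subgroup $K \cong \mathbb{F}_2$ with $[K, \Gamma_U] = \{1\}$; hence $L(K)$ is a diffuse subalgebra of $L(\Gamma)$ and $L(\Gamma_U) \subseteq L(K)' \cap L(\Gamma)$. Thus the relative commutant of the diffuse subalgebra $L(K)$ contains a non-amenable subalgebra, so it is itself non-amenable, directly contradicting solidity. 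This shows $L(\Gamma)$ is not solid, completing the proof.

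The genuine difficulty here is not located in the present theorem but is front-loaded into the supporting lemmas: the delicate points are the non-inner amenability (which rules out Gamma and rests on the explicit paradoxical decomposition of $\Gamma/\Gamma_{B_1}$ in Theorem \ref{thm:paradoxical} together with the ICC-over-$\Gamma_B$ property of Lemma \ref{lemma:ICCover}) and the weak bicentralizer condition, obtained through the centralizer/virtual-centralizer computation of Lemma \ref{lemma:wkbicentral}. Once those are granted, primeness follows mechanically from Theorem \ref{thm:primevNa}, and non-solidity follows from the ping-pong construction of commuting free and rigid-stabilizer subgroups in Lemma \ref{lemma:freesubgroup}.
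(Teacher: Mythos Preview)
Your proposal is correct and follows essentially the same approach as the paper: you assemble the hypotheses of Theorem \ref{thm:primevNa} from exactly the same list of supporting results (Theorems \ref{thm:icc}, \ref{thm:noninneramen}, \ref{thm:cocycleProj}, Corollary \ref{cor:iccsubgroup}, Lemmas \ref{lemma:wkmalnormal} and \ref{lemma:wkbicentral}). The only cosmetic difference is in the non-solidity step, where the paper takes $L(\Gamma_B)$ as the diffuse subalgebra and uses the relative-commutant identity $L(\Gamma_B)'\cap L(\Gamma)=L(\Gamma_{X\setminus B})$ from Lemma \ref{lemma:wkbicentral}, while you instead take $L(K)$ for a ping-pong free subgroup $K$ commuting with $\Gamma_U$; both arguments rest on Lemma \ref{lemma:freesubgroup} and are equivalent in content (just be sure to choose $U$ infinite as well, so that part (1) of that lemma applies).
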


\begin{proof}
    It suffices to show $\Gamma$ satisfies Theorem \ref{thm:primevNa}. Let $B$ be a closed ball of $X$ and $\Gamma_B$ the corresponding subgroup of $\Gamma$. By Theorem \ref{thm:icc}, $\Gamma$ is ICC and by Corollary \ref{cor:iccsubgroup}, $\Gamma_B$ is ICC. By Theorem \ref{thm:noninneramen}, $\Gamma$ is non-inner amenable when $\Gamma$ is an FSS$^*$ group and by Theorem \ref{thm:RNnoninneramean}, $\Gamma$ is non-inner amenable when $\Gamma = V_d(G)$. Therefore, in both cases, $L(\Gamma)$ does not have Property Gamma. Lemma \ref{lemma:wkbicentral} shows $L(\Gamma_B)$ is weakly bicentralized in $L(\Gamma)$ and by Lemma \ref{lemma:wkmalnormal}, $\Gamma_B$ is weakly malnormal. By Theorem \ref{thm:cocycleProj}, $\Gamma$ admits an unbound cocycle in the quasi-regular representation $(\lambda_{\Gamma / \Gamma_B},\ell^2(\Gamma / \Gamma_B))$. Therefore, by Theorem \ref{thm:primevNa}, $L(\Gamma)$ is prime.

    To see $L(\Gamma)$ is not solid, we claim $L(\Gamma_B)'\cap L(\Gamma) = L(\Gamma_{X \backslash B})$ is non-amenable. Indeed, by Lemma \ref{lemma:freesubgroup} (1) we have that $\langle g, h \rangle \cong \mathbb F_2 \subset \Gamma_{X \backslash B}$, which gives us the necessary conclusion. 
    
\end{proof}

As was previously discussed, $V_d$ was the only CSS group shown to give rise to a prime II$_1$ factor in \cite{de2023mcduff}. Thus, Theorem \ref{thm:primeCSS} vastly expands the class of generalized Thompson groups with this property to all FSS$^*$ groups and the countable R\"over-Nekrashevych groups.

\section{Dynamics and Properties of the Commutator Subgroup}\label{sec:PropofComm}

In \cite{Matui20215SFT} Matui showed the commutator subgroup of the topological full group of a non-periodic irreducible subshift of finite type is simple. The authors of \cite{BleakEtAll2023} later generalized this result to the commutator subgroup of so-called vigorous subgroups of the homeomorphism group of the Cantor space. As remarked previously, the CSS$^*$ groups discussed in Section \ref{sec:SFT} are exactly the topological full groups Matui studied and many (though not all) of the vigorous homeomorphism groups of the Cantor space are also CSS$^*$ groups, so it was natural to see which properties from \cite{Matui20215SFT} and \cite{BleakEtAll2023} hold for the class of CSS$^*$ groups. In particular, we prove some CSS$^*$ group have a simple commutator subgroup and some do not. We then go on to characterize the inclusion of the group von Neumann algebras and reduced group C$^*$-algebras arising from $\Gamma$ and its commutator subgroup when it is simple. Throughout this section, we recall and introduce relevant definitions as they arise.

Recall that a homeomorphism $h$ of topological space $X$ is said to \textit{locally agree} with a group $\Gamma \le \text{Homeo}(X)$ if for every point $x \in X$, there exists an open neighborhood $U$ of $x$ and an element $g \in \Gamma$ such that $h|_{U} = g|_{U}$ (see \cite{Olukoya2021}). The group $\Gamma$ is said to be \textit{full} if every homeomorphism of $X$ which locally agrees with an element of $\Gamma$ is, in fact, an element of $\Gamma$. This definition appears in \cite{Olukoya2021} and is more general than the definition of full from \cite{BleakEtAll2023}, and implies it. However, the converse does not hold. 

\begin{theorem}\label{thm:full}
If $\Gamma \leq \text{LS}(X)$ is a CSS group, then it is full.
\end{theorem}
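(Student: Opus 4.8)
The plan is to reduce the statement ``$h \in \Gamma$'' directly to the defining condition that $h$ be \emph{locally determined} by $\text{Sim}_X$, since by Definition \ref{def:CSS} a homeomorphism of $X$ lies in $\Gamma$ precisely when it is locally determined by $\text{Sim}_X$. So I would start with a homeomorphism $h$ of $X$ that locally agrees with $\Gamma$, fix an arbitrary point $x \in X$, and aim to produce a closed ball $B'$ with $x \in B'$, $h(B')$ a closed ball, and $h|_{B'} \in \text{Sim}_X(B', h(B'))$.

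First I would unwind the hypothesis at $x$: there is an open neighborhood $U$ of $x$ and an element $g \in \Gamma$ with $h|_U = g|_U$. Since $X$ is a compact ultrametric space, closed balls are clopen and form a neighborhood basis at each point, so I can choose a closed ball $B_0$ with $x \in B_0 \subseteq U$. Next I would exploit that $g \in \Gamma$ is \emph{itself} locally determined by $\text{Sim}_X$: this yields a closed ball $B'$ containing $x$ such that $g(B')$ is a closed ball and $g|_{B'} \in \text{Sim}_X(B', g(B'))$.

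To force $B' \subseteq B_0$ I would invoke the ultrametric dichotomy that two balls meeting at $x$ are nested: either $B' \subseteq B_0$, or $B_0 \subseteq B'$, in which case the Restriction axiom of the similarity structure gives $g|_{B_0} \in \text{Sim}_X(B_0, g(B_0))$, so replacing $B'$ by $B_0$ I may assume $B' \subseteq B_0 \subseteq U$ in all cases. Because $h$ and $g$ coincide on $U \supseteq B'$, I then obtain $h(B') = g(B')$, a closed ball, and $h|_{B'} = g|_{B'} \in \text{Sim}_X(B', h(B'))$, which is exactly the local determination condition at $x$. As $x$ was arbitrary, $h$ is locally determined by $\text{Sim}_X$ everywhere; being also a homeomorphism, $h \in \Gamma$ by Definition \ref{def:CSS}.

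The argument uses neither finiteness nor countability of the sets $\text{Sim}_X(B_1,B_2)$, so it applies verbatim to any CSS (indeed FSS) group. Since each deduction is an immediate consequence of the definitions, I do not expect a serious obstacle; the only point requiring care is the shrinking step, where one must combine the basis property of closed balls in an ultrametric space with the Restriction axiom to guarantee that the witnessing ball $B'$ can be taken inside the neighborhood $U$ on which $h$ and $g$ agree.
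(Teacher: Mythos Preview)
Your proposal is correct and follows essentially the same argument as the paper: both pick a closed ball inside the neighborhood $U$ where $h$ agrees with some $g\in\Gamma$, invoke local determination of $g$ to get a witnessing ball, shrink using the Restriction axiom so the witnessing ball sits inside $U$, and transfer the similarity from $g$ to $h$. The only cosmetic difference is that the paper takes a common subball $B_2\subseteq B_0\cap B_1$, whereas you use the ultrametric nesting dichotomy to replace $B'$ by $B_0$ directly; these are equivalent maneuvers.
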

\begin{proof}
Let $h$ be a homeomorphism of the compact ultrametric space $X$ that locally agrees with $\Gamma$. To show that $h$ belongs to $\Gamma$, we need to show that it is locally determined by the similarity structure $\Sim_{X}$. To this end, let $x \in X$. Then there exists an open neighborhood $U$ of $x$ and a group element $g \in \Gamma$ such that $h|_{U} = g|_{U}$. Because closed balls form a basis, there exists a closed ball $B_0$ such that $x \in B_0 \subseteq U$. Now, because $g$ is locally determined by $\Sim_{X}$, there exists a closed ball $B_1$ such that $x \in B_1$, $g(B_1)$ is a closed ball, and $g|_{B_1} \in \Sim_{X}(B_1,g(B_1))$. Finally, pick a closed ball $B_2$ such that $x \in B_2 \subseteq B_0 \cap B_1$. Since $B_2 \subseteq B_1$, the restriction property tells us that $g|_{B_2} \in \Sim_{X}(B_2,g(B_2))$. Moreover, because $B_2 \subseteq U$, we have that $h|_{B_2} = g|_{B_2}$. So, in reality, we have that $h|_{B_2} \in \Sim_{X}(B_2,h(B_2))$, thereby proving $h$ is locally determined by $\Sim_{X}$. Hence, $h \in \Gamma$. 
\end{proof}

We now recall from \cite{BleakEtAll2023} that a homeomorphism of a metric space $(X,d)$ is said to have \emph{small support} if its support is contained in a proper clopen subset of $X$.

\begin{theorem}\label{thm:smallsupp}
Let $\Gamma \leq \text{LS}(X)$ be a CSS$^*$ group. Then $\Gamma$ is generated by its elements of small support. In fact, every element of $\Gamma$ is the product of two elements of small support.
\end{theorem}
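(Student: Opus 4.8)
The plan is to reduce the statement to a clean reformulation and then exhibit an explicit two-fold factorization. First I would record the elementary observation that a homeomorphism $f$ has small support precisely when it fixes some nonempty ball pointwise: if $f|_W=\mathrm{id}$ for a nonempty clopen $W$, then the set of moved points lies in the clopen set $X\setminus W$, so $\supp(f)\subseteq X\setminus W$, a proper clopen subset; conversely any proper clopen set containing $\supp(f)$ has nonempty clopen complement fixed by $f$, and since closed balls form a basis one may shrink $W$ to a ball. Thus an element fixing a nonempty ball lies in some $\Gamma_U$ and has small support, so it suffices to factor an arbitrary $g\in\Gamma$ as $g=g_1g_2$ with $g_1$ and $g_2$ each fixing a nonempty ball pointwise. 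The case $g=\mathrm{id}$ is trivial, so I assume $g\neq\mathrm{id}$.

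Next comes the construction. By Lemma \ref{lemma:disjointballs} there is a ball with $g(\cdot)$ disjoint from itself; refining, I would choose a ball $C$ with $g(C)\cap C=\emptyset$ that moreover lies inside a single cell of the common refinement of the maximum partitions of $g$ and of $g^{-1}$ (using that two partitions of a compact ultrametric space admit a common refinement into balls). This guarantees $g|_C\in\Sim(C,g(C))$ and that $g^{-1}(C)$ is a ball with $g|_{g^{-1}(C)}\in\Sim(g^{-1}(C),C)$. Writing $C^{\ast}:=g(C)$, I would shrink $C$ once more if needed so that $C\cup C^{\ast}\neq X$; this is always possible, since if $C\sqcup C^{\ast}=X$ then both balls are infinite (as $X$ is infinite and $g|_C$ is a bijection onto $C^{\ast}$), and passing to a proper subball of $C$ makes the union proper. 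Now I define $g_2$ by $g_2=\mathrm{id}$ on $C$, by $g_2=g\circ g=g^2$ on $g^{-1}(C)$ (a composition of similarities, hence an element of $\Sim(g^{-1}(C),C^{\ast})$ carrying $g^{-1}(C)$ onto $C^{\ast}$), and by $g_2=g$ on $X\setminus(C\cup g^{-1}(C))$. Since $C$ and $g^{-1}(C)$ are disjoint and the three images $C$, $C^{\ast}$, $X\setminus(C\cup C^{\ast})$ partition $X$, the pasting lemma (Lemma \ref{lemma:paste}) shows $g_2\in\Gamma$, and $g_2|_C=\mathrm{id}$ gives $g_2$ small support.

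Finally I would set $g_1:=gg_2^{-1}$, so that $g=g_1g_2$ automatically, and verify that $g_1$ fixes the nonempty clopen set $X\setminus(C\cup C^{\ast})$ pointwise. On that set $g_2=g$, so $g_2^{-1}=g^{-1}$ there, whence $g_1=gg^{-1}=\mathrm{id}$; a direct check on the two remaining pieces shows $g_1|_C=g|_C$ and $g_1|_{C^{\ast}}=g^{-1}|_{C^{\ast}}$, so $\supp(g_1)\subseteq C\cup C^{\ast}$, a proper clopen set. Hence $g_1$ also has small support, proving that every element is a product of two small-support elements; the assertion that $\Gamma$ is generated by its elements of small support is then immediate.

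The main obstacle is ensuring that the rerouted map $g_2$ is a genuine element of $\Gamma$ and not merely a set bijection: this is exactly why $C$ must sit inside a single maximum region of both $g$ and $g^{-1}$, so that $g|_C$ and $g^2|_{g^{-1}(C)}$ are honest similarities to which the pasting lemma applies. The only other delicate point is the degenerate case $C\cup g(C)=X$, which is dispatched by the shrinking argument above. It is worth noting that the construction invokes the CSS$^{*}$ hypothesis only through Lemma \ref{lemma:disjointballs} and the infiniteness of $X$, the remaining ingredients being the common-refinement and pasting lemmas valid for any CSS group.
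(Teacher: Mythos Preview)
Your proof is correct and essentially the same as the paper's: both factor $g=g_1g_2$ where $g_1$ is the involution swapping a small ball $C$ with $g(C)$ via $g|_C$ and $g^{-1}|_{g(C)}$ and is the identity elsewhere, and $g_2=g_1^{-1}g$ fixes $C$; you build $g_2$ first via pasting and then set $g_1=gg_2^{-1}$, while the paper builds the swap $h=g_1$ first (using Corollary~\ref{cor:extension}) and takes $hg=g_2$, but the resulting factorization is identical. One small imprecision: to invoke the pasting lemma for $g_2$ you must further refine $X\setminus(C\cup g^{-1}(C))$ into balls contained in maximum regions of $g$ (which your choice of $C$ makes possible), rather than treating it as a single piece.
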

\begin{proof}
Let $g \in \Gamma$ be a nontrivial element. Then there exists a point $x \in X$ such that $g(x) \neq x$. Pick closed balls $B$ and $D$ disjoint such that $x \in B$ and $g(x) \in D$. Because $g$ is locally determined by $\Sim_X$, we can find a closed ball $B'$ containing $x$ such that $g(B')$ is a closed ball and $g\vert_{B'} \in \Sim_{X}(B', g(B'))$. Pick $\epsilon > 0$ small enough so that $\overline{B}(x, \epsilon ) \subseteq B \cap B'$, $g(\overline{B}(x,\epsilon)) \subseteq D \cap g(B')$, and $\overline{B}(x,\epsilon) \cup g(\overline{B}(x,\epsilon ))$ is a proper subset of $X$. By the restriction property, we know that $g\vert_{\overline{B}(x, \epsilon)} \in \Sim_{X}(\overline{B}(x, \epsilon ), g(\overline{B}(x,\epsilon)))$. Because $\overline{B}(x, \epsilon) \subseteq B$ and $g(\overline{B}(x,\epsilon)) \subseteq D$, we know that $\overline{B}(x, \epsilon)$ and $g(\overline{B}(x, \epsilon))$ are disjoint closed balls. Since they are disjoint, we know by Corollary \ref{cor:extension} that the similarity $g\vert_{\overline{B}(x,\epsilon)}$ can be extended to an element $h \in \Gamma$ such that $h$ agrees with $g$ on $\overline{B}(x,\epsilon)$, with $g^{-1}$ on $g(\overline{B}(x,\epsilon))$, and is the identity on the proper clopen set $X \setminus (\overline{B}(x, \epsilon) \cup g(\overline{B}(x, \epsilon)))$. Note that $\supp(h^{-1}) = \supp (h) \subseteq \overline{B}(x, \epsilon) \cup g(\overline{B}(x, \epsilon))$, and that $hg$ acts as the identity on $\overline{B}(x, \epsilon)$, meaning that $\supp (hg) \subseteq X \setminus \overline{B}(x, \epsilon)$. Hence, $g = h^{-1}(hg)$ is a product of elements of small support.
\end{proof}

From \cite{BleakEtAll2023}, a subset $S$ of homeomorphisms on a compact ultrametric space $(X,d)$ is said to be \emph{vigorous} if whenever $A,B,C$ are clopen subsets of $X$ with $B$ and $C$ proper subsets of $A$, there exists a homeomorphism $g \in S$ such that $\supp (g) \subseteq A$ and $g(B) \subseteq C$. Note that this definition only makes sense in a compact ultrametric space without finite balls, which in turn is homeomorphic to the Cantor space. Consequently, the class of CSS$^*$ groups that are vigorous is restricted.

\begin{theorem}\label{thm:vigorous}
    Let $\Gamma \leq \text{LS}(X)$ be a CSS$^*$ group and assume that $X$ does not contain finite balls. Then $\Gamma$ is vigorous.  
\end{theorem}

\begin{proof}
    Let $A \subset X$ be a clopen subset and let $B$ and $C$ be proper clopen subsets of $A$. Then we can write $B$ as a finite union of balls, $B = B_1 \sqcup \dots \sqcup B_n$. If $B = C$ there is nothing to do, so assume $B \neq C$. By the CSS$^*$ property we can find disjoint balls $C_1, \dots, C_n$, that do not partition $C$, such that $\text{Sim}(B_i,C_i)$ contains a similarity $h_i$. Then by the extension property (Proposition \ref{prop:extension}), there exists some $g \in \Gamma$ such that $g\vert_{B_i}=h_i$ and $\supp(g) \subset A$ and $g(B) \subset C$. 
    
\end{proof}

Putting the last few properties together, we can determine when the commutator subgroup of a CSS$^*$ group is simple. 

\begin{corollary}[Theorem \ref{mainthm:simplecomm}]\label{cor:simplecomm}
Let $\Gamma \leq \text{LS}(X)$ be a CSS$^*$ group and assume that $X$ does not contain finite balls. Then the action of $[\Gamma , \Gamma ]$ on $X$ is vigorous, $[\Gamma, \Gamma ]$ is generated by its elements of small support, and $[\Gamma , \Gamma ]$ is a simple group.
\end{corollary}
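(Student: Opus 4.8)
The plan is to show that $\Gamma$ satisfies the hypotheses of the simplicity machinery of Bleak, Elliott, and Hyde \cite{BleakEtAll2023} and then transport their conclusion to the ultrametric setting. Since $X$ has no finite balls it is a compact, perfect, totally disconnected, metrizable space, hence homeomorphic to the Cantor space by the standard topological characterization, so $\Gamma \le \operatorname{Homeo}(X)$ is a group of homeomorphisms of the Cantor set. I have already assembled the three structural inputs that their result requires: $\Gamma$ is full (Theorem \ref{thm:full}), it is generated by its elements of small support (Theorem \ref{thm:smallsupp}), and its action is vigorous (Theorem \ref{thm:vigorous}). The first reconciliation step is to observe that the notion of fullness used here (Olukoya's) implies the notion of fullness used in \cite{BleakEtAll2023}, as noted just before Theorem \ref{thm:full}; thus all hypotheses hold verbatim for $\Gamma$.

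Granting this, the three assertions of the corollary are precisely the conclusions \cite{BleakEtAll2023} draws for a vigorous full group $G$: its commutator subgroup acts vigorously, is generated by its small-support elements, and is simple. Applying this with $G = \Gamma$ yields the statement directly.

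For completeness I would also indicate the core of the simplicity argument, which is a commutator-pushing calculus in the spirit of Matui \cite{Matui20215SFT}. Let $N \trianglelefteq [\Gamma,\Gamma]$ be nontrivial and pick $1 \ne n \in N$. By Lemma \ref{lemma:disjointballs} there is a ball $A$ with $n(A) \cap A = \emptyset$. For any $g \in [\Gamma,\Gamma]$ with $\supp(g) \subseteq A$ one computes $[g,n] = g\,(n g^{-1} n^{-1}) \in N$, and since $\supp(n g^{-1} n^{-1}) \subseteq n(A)$ is disjoint from $A$, the element $[g,n]$ restricts to $g$ on $A$. Feeding in two such elements $g_1,g_2$ and taking a further commutator yields an element of $N$ that restricts to $[g_1,g_2]$ on $A$ while being supported in the disjoint region $A \cup n(A)$; vigorousness then lets one conjugate $A$ around to exhaust $X$, and because $[\Gamma,\Gamma]$ is generated by commutators of small-support elements one concludes $N = [\Gamma,\Gamma]$.

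The main obstacle is the careful bookkeeping in this last step: one must ensure that the phantom action produced on the translate $n(A)$ can always be cancelled or absorbed so that $N$ genuinely captures every commutator of small-support elements, and one must verify that vigorousness and small-support generation descend to $[\Gamma,\Gamma]$ rather than merely holding for $\Gamma$. Both points are handled cleanly in the homeomorphism-group framework of \cite{BleakEtAll2023}, so the most economical route is to check the three hypotheses—fullness, vigorousness, and small-support generation, all established above—and invoke their theorem.
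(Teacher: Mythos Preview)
Your proposal is correct and follows essentially the same route as the paper: verify that $\Gamma$ is full (Theorem \ref{thm:full}) and vigorous (Theorem \ref{thm:vigorous}), then invoke the machinery of \cite{BleakEtAll2023} to deduce that $[\Gamma,\Gamma]$ is vigorous, generated by small-support elements, and simple. The paper cites the specific Lemmas 4.14, 4.15, and 4.17 of \cite{BleakEtAll2023} in sequence rather than a single packaged theorem, and it does not actually need Theorem \ref{thm:smallsupp} as an input (small-support generation of $[\Gamma,\Gamma]$ is a \emph{conclusion} via their Lemma 4.15), but this is a cosmetic difference; your optional Matui-style sketch of the commutator-pushing argument is a welcome addition that the paper omits.
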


\begin{proof}
Since $[\Gamma , \Gamma]$ is a nontrivial normal subgroup of $\Gamma$, which is itself vigorous by Theorem \ref{thm:vigorous}, it follows from \cite[Lemma 4.14]{BleakEtAll2023} that the action of $[\Gamma , \Gamma]$ on $X$ is vigorous. By \cite[Lemma 4.15]{BleakEtAll2023}, we see that $[\Gamma, \Gamma ]$ is generated by its elements of small support. By Theorem \ref{thm:full}, $\Gamma$ is full (and hence approximately full). Therefore, by \cite[Lemma 4.17]{BleakEtAll2023} we obtain simplicity of the commutator subgroup $[\Gamma , \Gamma ]$.

\end{proof}

In particular, Corollary \ref{cor:simplecomm} recovers the simple commutator subgroup results of \cite{Matui20215SFT} and \cite{BleakEtAll2023} for the CSS$^*$ groups arising from irreducible subshifts of finite type, $V_{d,r}$ and $V_d(G)$. Moreover, Corollary \ref{cor:simplecomm} gives a general criterion for which CSS$^*$ groups will have a simple commutator subgroup and can be seen as a generalization of the simple commutator subgroup results in Section 8 of an earlier version of \cite{FarleyHughes15}.

We now further motivate the assumption of $X$ containing no finite balls. In particular, if $X$ contains finite balls, then the elements of finite support in $\Gamma$ produce normal subgroups, showing the commutator subgroup need not be simple, in contrast to the results of \cite{Matui20215SFT} and \cite{BleakEtAll2023}.

\begin{lemma}
    Let $\Gamma \leq \text{LS}(X)$ be a CSS$^*$ group and assume $X$ contains finite balls. Let $\Lambda$ denote the set of elements of $\Gamma$ with finite support. Then $\Lambda$ is a normal subgroup of $\Gamma$ and $\Lambda \cap [\Gamma,\Gamma]$ is a normal subgroup of $[\Gamma,\Gamma]$.
\end{lemma}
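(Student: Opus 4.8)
The plan is to reduce everything to three elementary identities about supports of bijections, none of which uses the CSS$^*$ structure beyond the fact that every element of $\Gamma$ is a bijection of $X$. For $f,g \in \Gamma$ I would first record: (i) $\supp(f^{-1}) = \supp(f)$, since $f^{-1}(x) \neq x$ is equivalent to $x \neq f(x)$; (ii) $\supp(fg) \subseteq \supp(f) \cup \supp(g)$, since any point fixed by both $f$ and $g$ is fixed by $fg$; and (iii) $\supp(gfg^{-1}) = g(\supp(f))$, since $gfg^{-1}(x) \neq x$ is equivalent to $g^{-1}(x) \in \supp(f)$.

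From these, showing that $\Lambda$ is a subgroup of $\Gamma$ is immediate. The identity has empty support, so $\Lambda$ is nonempty; identity (i) shows $\Lambda$ is closed under inverses, and identity (ii) shows it is closed under products, because the union of two finite sets is finite. Since $\Lambda \subseteq \Gamma$ and all these operations are carried out inside the group $\Gamma$, there is no need to separately re-check that the resulting homeomorphisms are locally determined by $\Sim_X$; they automatically remain in $\Gamma$.

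Next, normality of $\Lambda$ in $\Gamma$ follows from (iii): for $g \in \Gamma$ and $f \in \Lambda$, the set $g(\supp(f))$ has the same cardinality as $\supp(f)$ because $g$ is a bijection, hence is finite, so $gfg^{-1} \in \Lambda$. Finally, I would deduce that $\Lambda \cap [\Gamma,\Gamma]$ is normal in $[\Gamma,\Gamma]$ from the general principle that the intersection of a normal subgroup of $\Gamma$ with any subgroup $H \le \Gamma$ is normal in $H$: for $h \in [\Gamma,\Gamma]$ and $f \in \Lambda \cap [\Gamma,\Gamma]$, the conjugate $hfh^{-1}$ lies in $\Lambda$ by the normality just established, and lies in $[\Gamma,\Gamma]$ because the latter is a subgroup, so it lies in the intersection.

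I anticipate no genuine obstacle here: the statement is a formal consequence of the three support identities, and the only point requiring mild care is the bookkeeping in (iii), where one must verify that conjugation transports the support forward by the bijection $g$ (so that finiteness is preserved) rather than enlarging or collapsing it.
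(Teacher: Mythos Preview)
Your proposal is correct and follows essentially the same approach as the paper: the paper also deduces the subgroup property from $\supp(gh)\subseteq\supp(g)\cup\supp(h)$ and normality from the identity $\supp(g^{-1}hg)=g^{-1}(\supp(h))$, which is your identity (iii) with a relabeling. You are slightly more explicit than the paper in recording $\supp(f^{-1})=\supp(f)$ and in spelling out the general fact that a normal subgroup intersected with any subgroup is normal in that subgroup, but the argument is the same.
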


\begin{proof}
    The fact that $\Lambda$ is a subgroup follows from the observation that $\supp(gh) \subset \supp(g) \cup \supp(h)$ for any $g,h \in \Gamma$. To see that $\Lambda$ is normal, consider $h \in \Lambda$ and $g \in \Gamma$. Then $x \in \supp(g^{-1}hg)$ if and only if $g^{-1}hg(x)\neq x$ if and only if $hg(x) \neq g(x)$ if and only if $g(x) \in \supp(h)$ if and only if $x \in g^{-1}(\supp(h))$. Therefore, $\supp(g^{-1}hg) = g^{-1}(\supp(h))$, implying $g^{-1}hg$ has finite support.

    The subgroup $\Lambda \cap [\Gamma , \Gamma]$ will contain a nontrivial element. Indeed, let $B$ be a finite ball, and let $B_0$ be any infinite ball disjoint from $B$. Then there exist two disjoint infinite subballs $C$ and $D$ of $B_0$, and, moreover, there exist finite subballs of $C$ and $D$ and two similarities from $B$ to each finite subball. For simplicity, let $C$ and $D$ denote these finite balls, and let 
    $f \in \Gamma$ be the element such that $f(B) = C$, $f(C) = B$, and $f\vert_{X \setminus \{B \cup C\}} = \operatorname{id}$. Similarly, let $g \in \Gamma$ be the element such that $g(B) = D$, $g(D) = B$, and $g\vert_{X \setminus \{B \cup D\}} =\operatorname{id}.$ Both $f$ and $g$ exist by Corollary \ref{cor:extension}. Then the commutator $[f,g]$ is an element of finite support. We claim that $[f,g]$ is nontrivial. Indeed, $[f,g]\vert_B \neq \operatorname{id}$. To see this, observe that  
    $$[f,g](B) = f^{-1}g^{-1}fg(B) = f^{-1}g^{-1}f(D) = f^{-1}g^{-1}(D) = f^{-1}(B) = C.$$ Since $C \cap B = \emptyset$, $[f,g]$ is nontrivial. Hence, $\Lambda \cap [\Gamma , \Gamma ]$ is nontrivial. 
\end{proof}

The previous lemma establishes that for $\Gamma \le \text{LS}(X)$ a CSS$^*$ group on a compact ultrametric space $X$ with finite balls, $[\Gamma, \Gamma]$ is not simple. We will also see that in this setting $\Gamma$ is not $C^*$-simple or acylindrically hyperbolic.

\begin{theorem}\label{thm:amenSubgroup}
    Let $\Gamma \leq \text{LS}(X)$ be a CSS$^*$ group and assume $X$ contains finite balls. Then the subgroup $\Lambda$ of elements of finite support is isomorphic to an $n$-fold product of the group $S_\infty$, where $n$ is any positive integer or $\infty$. Moreover, neither $\Gamma$ or $[\Gamma,\Gamma]$ are $C^*$-simple. 
\end{theorem}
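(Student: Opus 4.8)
The plan is to determine $\Lambda$ completely by analyzing the supports of its elements, read off the isomorphism type, and then deduce the failure of $C^*$-simplicity from the triviality of the amenable radical of a $C^*$-simple group. Throughout I would work in the only non-vacuous case, namely when $X$ contains a finite ball (equivalently $\Lambda \neq \{1\}$); when $X$ has no finite balls, $\Lambda$ is trivial and Corollary \ref{cor:C*simple} applies instead. First I would reduce to the isolated points: let $I$ denote the set of isolated points of $X$. If $f \in \Lambda$ and $f(x) \neq x$, then $f$ is a local similarity on some ball about $x$, and if that ball were infinite then $f$ would move an entire infinite subball (as in the argument of Lemma \ref{lemma:disjointballs}), contradicting finiteness of $\supp(f)$. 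Hence $\supp(f)$ meets only finite balls; since a finite clopen subset of $X$ is discrete, every point of a finite ball is isolated, and every isolated point is a singleton ball. Thus each $f \in \Lambda$ fixes all non-isolated points and permutes $I$ with finite support.

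Next I would identify the constraint the similarity structure imposes and show that every admissible permutation is realized. On $I$ declare $x \sim y$ iff $\text{Sim}(\{x\},\{y\}) \neq \emptyset$; the Identities, Inverses, and Composition axioms of a similarity structure make $\sim$ an equivalence relation, with classes $\{I_i\}_{i=1}^{n}$, where $n \in \{1,2,\dots\}\cup\{\infty\}$. Because any $f \in \Lambda$ satisfies $f|_{\{x\}} \in \text{Sim}(\{x\},\{f(x)\})$, it maps each point to a $\sim$-equivalent point, hence preserves every $I_i$ setwise; this embeds $\Lambda$ into $\bigoplus_{i} \text{Sym}_{\text{fin}}(I_i)$. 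For the reverse inclusion, given $x,y \in I_i$ I would apply the Pasting Lemma (Lemma \ref{lemma:paste}) to a partition of $X$ refining $\{\{x\},\{y\}\}$, using the chosen similarity in $\text{Sim}(\{x\},\{y\})$ and the identity elsewhere, to produce the transposition $(x\,y) \in \Gamma$; these transpositions generate the full restricted product. Finally, to upgrade each factor to $S_\infty$, I would show each $I_i$ is countably infinite: given $x \in I_i$ and an infinite ball $D$ disjoint from $x$, condition (2) of Definition \ref{def:CSS*} produces a singleton subball of $D$ similar to $x$, and condition (1) supplies ever-new disjoint infinite subballs in which to repeat this. Together these give $\Lambda \cong \bigoplus_{i=1}^{n} S_\infty$ (a finite direct product when $n$ is finite, the restricted direct product when $n=\infty$).

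For the $C^*$-simplicity claims I would argue as follows. Since $S_\infty$ (the finitary symmetric group) is locally finite, $\Lambda$ is locally finite and therefore amenable, and by the preceding lemma it is a non-trivial normal subgroup of $\Gamma$. As a $C^*$-simple group has trivial amenable radical \cite{BKKO2016UniqueTraceProp}, $\Gamma$ cannot be $C^*$-simple. For the commutator subgroup, $[\Lambda,\Lambda] = \bigoplus_{i} \text{Alt}_{\text{fin}}(I_i)$ is non-trivial because each $I_i$ is infinite, and it is contained in $\Lambda \cap [\Gamma,\Gamma]$, which is amenable (a subgroup of $\Lambda$) and normal in $[\Gamma,\Gamma]$ by the preceding lemma; the same amenable-radical obstruction then shows $[\Gamma,\Gamma]$ is not $C^*$-simple.

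The main obstacle is the middle step: proving that $\Lambda$ is \emph{exactly} the restricted product of finitary symmetric groups, which requires controlling the similarity structure in both directions. One must verify that $\sim$ records no constraint finer than realizability (so that the Pasting Lemma genuinely produces every within-class transposition as an element of $\Gamma$) and no behavior coarser than class preservation (so that elements of $\Lambda$ neither mix classes nor disturb non-isolated points). The infinitude of each class, which is precisely what distinguishes $S_\infty$ from a finite symmetric group and which drives the non-triviality of $[\Lambda,\Lambda]$, is exactly where both CSS$^*$ axioms are needed.
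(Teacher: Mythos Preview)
Your proof is correct and follows the same overall strategy as the paper: identify the minimal finite balls carrying the support of $\Lambda$, partition them into similarity classes, show each class is infinite, and read off the restricted product of $S_\infty$'s. The packaging differs in two ways worth noting. First, the paper phrases the classes as orbits of the action $\Gamma \curvearrowright \mathcal{E}$ from Section~\ref{sec:cocycle} and invokes Lemma~\ref{lemma:inforbit} for their infinitude, whereas you work directly with isolated points and the relation $\text{Sim}(\{x\},\{y\})\neq\emptyset$, re-deriving infinitude from the two CSS$^*$ axioms; these are the same equivalence relation, but your route is more self-contained. Second, your $C^*$-simplicity argument is cleaner: you observe that $\Lambda$ is locally finite (hence amenable) regardless of whether $n$ is finite or infinite, so a single appeal to the amenable-radical obstruction handles both $\Gamma$ and $[\Gamma,\Gamma]$; the paper instead splits into cases on $n$ and, for $n=\infty$, routes through \cite[Theorem 1.4]{BKKO2016UniqueTraceProp} after first arguing $\Lambda$ itself is not $C^*$-simple. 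Both arguments are valid, but yours is more economical.
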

\begin{proof}
    Let $f \in \Lambda$. Then $\supp(f)$ is a finite clopen set. Take $\supp(f) = B_1 \sqcup \dots \sqcup B_m$ to be the partition such that there exists no proper subball $D \subset B_i$ for all $1 \leq i \leq m$. In particular, this is the partition of $\supp(f)$ containing the most possible balls. Necessarily, $f$ is permuting the balls $B_1, \dots, B_m$. Now consider the action of $\Gamma \curvearrowright \mathcal E$ from Section \ref{sec:cocycle}. Then every $B_i$ in the partition of $\supp(f)$ will appear in some orbit $\Gamma \cdot [\text{incl}\vert_B,B]$, where $B$ is a finite ball that does not contain proper subballs. It follows from Lemma \ref{lemma:inforbit} that each such orbit is infinite. Moreover, the balls contained in these orbits are disjoint from each other because this is just the set of finite balls with no proper subballs. Therefore, $\Lambda$ is isomorphic to an $n$-fold direct product of $S_\infty$, where $n$ is the number of distinct orbits $\Gamma \cdot [\text{incl}\vert_B,B]$, where $B$ is a finite ball that does not contain proper subballs. When $n$ is finite, $\Lambda$ is amenable. To see the isomorphism, fix the lexicographical order for the balls in each orbit. Then $f \in \Lambda$ is a product of permutations; one permutation for each orbit represented in $\supp(f) = B_1 \sqcup \dots \sqcup B_m$. Conversely, if $(\sigma_1,\dots, \sigma_n) \in \prod_{i=1}^nS_\infty$, we can use the pasting lemma (Lemma \ref{lemma:paste}) to define $f \in \Lambda \leq \Gamma$ such that each $\sigma_i$ is represented by how $f$ permutes the $B_i$ in $\supp(f)$ of the same orbit. 

    To conclude that $\Gamma$ and $[\Gamma, \Gamma]$ are not $C^*$-simple when $n$ is finite, we appeal to the well known result that a $C^*$-simple discrete group does not contain any nontrivial normal amenable subgroups (see for example \cite[Proposition 3]{delaHarpeSimpleC*2007}). When $n$ is finite, $\Lambda$ is a normal amenable subgroup of $\Gamma$ and $[\Lambda, \Lambda]$ is a normal amenable subgroup of $[\Gamma, \Gamma]$. When $n$ is infinite, then $\Lambda$ is an infinite direct product of copies of $S_\infty$. Therefore, $\Lambda$ has many normal amenable subgroups and so is not $C^*$-simple. Since $\Lambda$ is a normal subgroup of $\Gamma$, then $\Gamma$ is also not $C^*$-simple, by \cite[Theorem 1.4]{BKKO2016UniqueTraceProp}. It also follows that $[\Gamma, \Gamma]$ is not $C^*$-simple.

\end{proof}

Since the only finite balls in the compact ultrametric space $\operatorname{QAut}(\mathcal T_{2,c})$ from Example \ref{ex:quasiauto} acts on are singletons, it follows from the previous result that the subgroup of elements of finite support for $\operatorname{QAut}(\mathcal T_{2,c})$ is isomorphic to $S_\infty$, which was previously proven in \cite[Theorem 5.1]{Nucinkis2018}.

\begin{corollary}[Corollary \ref{mainthm:notacylin}]\label{cor:notacylin2}
    Let $\Gamma \leq \text{LS}(X)$ be a CSS$^*$ group. Then $\Gamma$ is not acylindrically hyperbolic.
\end{corollary}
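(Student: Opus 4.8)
The plan is to argue by contradiction: assuming $\Gamma$ is acylindrically hyperbolic, I would play off two standard consequences of that hypothesis. First, by the structure theory of such groups (Osin, Dahmani--Guirardel--Osin \cite{dahmani2017hyperbolically}), every infinite normal subgroup of an acylindrically hyperbolic group is again acylindrically hyperbolic, hence contains a copy of $\mathbb{F}_2$ and is in particular non-amenable; so an acylindrically hyperbolic group has no infinite amenable normal subgroup. Second, by Bestvina--Fujiwara, an acylindrically hyperbolic group carries an infinite-dimensional space of nontrivial reduced homogeneous quasimorphisms. The proof then splits according to whether or not $X$ contains finite balls, which is precisely the dichotomy separating the groups governed by Theorem \ref{thm:amenSubgroup} from the vigorous groups of Theorem \ref{thm:vigorous}.

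If $X$ contains a finite ball, then it contains a singleton ball, hence an isolated point, and by Lemma \ref{lemma:inforbit} its $\Gamma$-orbit is infinite; thus the subgroup $\Lambda$ of finite-support elements is nontrivial and infinite. Since every finite-support element merely permutes finitely many isolated points and therefore has finite order, $\Lambda$ is locally finite, hence amenable, and it is normal in $\Gamma$ by the lemma preceding Theorem \ref{thm:amenSubgroup}. An infinite amenable normal subgroup contradicts the first consequence above, so $\Gamma$ is not acylindrically hyperbolic in this case. This is the easy half and essentially only repackages Theorem \ref{thm:amenSubgroup}.

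When $X$ has no finite balls it is a Cantor space, $\Gamma$ is vigorous (Theorem \ref{thm:vigorous}), and $[\Gamma,\Gamma]$ is simple (Corollary \ref{cor:simplecomm}); here the amenable-radical argument is useless, since simplicity of $[\Gamma,\Gamma]$ together with the ICC property forces every nontrivial normal subgroup of $\Gamma$ to be non-amenable. Instead I would invoke the second consequence: the aim is to show $\Gamma$ has no nonzero reduced homogeneous quasimorphism, for which it suffices that $[\Gamma,\Gamma]$ be uniformly perfect (a homogeneous quasimorphism is bounded, hence zero, on a uniformly perfect subgroup, and a homogeneous quasimorphism vanishing on $[\Gamma,\Gamma]$ is a homomorphism, so contributes nothing to the reduced space). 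Uniform perfectness of the commutator subgroup of a vigorous group is exactly the sort of statement proved in \cite{BleakEtAll2023}, and I would obtain it by the same fragmentation and commutator estimates, feeding in the small-support decomposition of Theorem \ref{thm:smallsupp}, the abundance of commuting free copies from Lemma \ref{lemma:freesubgroup}, and the displacement property built into vigorousness. This contradicts the Bestvina--Fujiwara dichotomy and closes the Cantor case.

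The hard part is this last case. The tempting element-wise route---showing directly that $\Gamma$ has no generalized loxodromic element by proving every nontrivial $g$ has non-virtually-cyclic centralizer---breaks down precisely for dense-support ``north--south'' elements (for instance a basic element of $V$ with one attracting and one repelling fixed point), whose centralizer in $\Gamma$ really can be virtually cyclic: the $g$-equivariant elements one would use to enlarge the centralizer require infinitely many pieces accumulating at the fixed points and so fail to be locally determined by $\Sim_X$, i.e. fail to lie in $\Gamma$. This is why I would sidestep centralizers entirely and instead establish the global bounded-cohomological obstruction through uniform perfectness, where the genuine technical work is transporting the commutator-width bounds of \cite{BleakEtAll2023} into the language of similarity structure groups.
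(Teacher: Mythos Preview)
Your proposal is correct, and in the finite-ball case it essentially coincides with the paper's argument: both exploit the infinite amenable normal subgroup $\Lambda$ of Theorem \ref{thm:amenSubgroup}. The paper phrases the obstruction via the equivalence, for acylindrically hyperbolic groups, of non-inner amenability and $C^*$-simplicity from \cite{dahmani2017hyperbolically}, while you invoke directly that an infinite normal subgroup of an acylindrically hyperbolic group is itself acylindrically hyperbolic; these are two faces of the same amenable-radical obstruction.

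In the Cantor case, however, you take a genuinely harder route. The paper simply observes that simplicity of $[\Gamma,\Gamma]$ (Corollary \ref{cor:simplecomm}), together with triviality of the center, forces every nontrivial normal subgroup of $\Gamma$ to contain $[\Gamma,\Gamma]$, so every proper quotient of $\Gamma$ is abelian; a countable group with only abelian proper quotients cannot be SQ-universal, and acylindrically hyperbolic groups are SQ-universal by \cite{dahmani2017hyperbolically}, so the conclusion is immediate. This uses only what has already been established and requires no new estimates. Your approach instead needs \emph{uniform} perfectness of $[\Gamma,\Gamma]$, which is strictly stronger than simplicity and is not proved anywhere in the paper; you would have to import or reprove the commutator-width bounds from \cite{BleakEtAll2023} in the similarity-structure language before the Bestvina--Fujiwara obstruction applies. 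That is a legitimate path, and it would yield the finer conclusion that the reduced homogeneous quasimorphism space of $\Gamma$ vanishes, but as a proof of this corollary it is substantially more work than the SQ-universality one-liner the paper uses.
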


\begin{proof}
First, we assume $X$ does not contain finite balls, then simplicity of $[\Gamma ,\Gamma]$ entails that all proper quotients of a CSS$^*$ group, whose compact ultrametric space does not contain finite balls, are abelian. Indeed, let $N$ be a normal subgroup of a CSS$^*$ group $\Gamma$, and pick a nontrivial element $g \in N$. Since CSS$^*$ groups have trivial center (because they are ICC), there must be a nontrivial element $h \in \Gamma$ such that the commutator $[h,g]$ is nontrivial. But $[h,g] = (h^{-1} g^{-1} h)g$ which must lie in $N$ because $h^{-1} g^{-1} h$ lies in $N$ due to $N$ being closed under conjugation, and the product $(h^{-1} g^{-1} h)g$ lies in $N$ due to $N$ being a subgroup. Hence, $N \cap [\Gamma , \Gamma]$ is a nontrivial normal subgroup of the simple group $[\Gamma , \Gamma ]$ which implies $[\Gamma , \Gamma ] = N \cap [\Gamma , \Gamma ] \le N$. Hence, $\Gamma /N$ is abelian. From this we conclude that $\Gamma$ is not SQ-universal and therefore not acylindrically hyperbolic by \cite{dahmani2017hyperbolically}.

Now, assume that $X$ contains finite balls and suppose for the sake of contradiction that $\Gamma \leq  \text{LS}(X)$ is acylindrically hyperbolic. By \cite[Theorem 8.14]{dahmani2017hyperbolically}, being ICC and $C^*$-simple are equivalent for acylindrically hyperbolic groups. This is a contradiction because $\Gamma$ is ICC, by Theorem \ref{thm:icc}, yet not $C^*$-simple when $X$ contains finite balls, by Theorem \ref{thm:amenSubgroup}. 
\end{proof}

We now focus on CSS$^*$ groups where $X$ does not contain finite balls, and the commutator subgroup of $\Gamma$ is simple. In this setting, we can characterize the types of inclusions of both the group von Neumann algebras and reduced group $C^*$-algebras arising from $\Gamma$ and its commutator subgroup. Before proceeding, we need the following lemma that the centralizer of $[\Gamma, \Gamma]$ is trivial.

\begin{lemma}\label{lemma:centralizerofcommutator}
Let $\Gamma \le \text{LS}(X)$ be a CSS$^*$ and assume $X$ does not contain finite balls. If $g \in \Gamma$ commutes with everything in the commutator subgroup $[\Gamma , \Gamma]$, then $g$ is the identity element, ie $\mathcal C_\Gamma ([\Gamma,\Gamma])$ is trivial.
\end{lemma}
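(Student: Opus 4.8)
The plan is to argue by contradiction, exploiting the fact that conjugation transports supports. Suppose $g \in \mathcal{C}_\Gamma([\Gamma,\Gamma])$ is nontrivial. Then there is a point $x \in X$ with $g(x) \neq x$, and by Lemma \ref{lemma:disjointballs} there is a ball $B \subset X$ with $B \cap g(B) = \emptyset$. Since $X$ has no finite balls, $B$ is automatically infinite, and it is proper because $B$ and $g(B)$ are disjoint.

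The key step is to manufacture a nontrivial element $h \in [\Gamma,\Gamma]$ whose support is contained in $B$. For this I would invoke the vigorousness of the action of $[\Gamma,\Gamma]$ established in Corollary \ref{cor:simplecomm}: because $B$ is an infinite ball, the first CSS$^*$ condition supplies two disjoint infinite subballs $B', C' \subsetneq B$, which are proper clopen subsets of $B$. Applying vigorousness to the clopen triple consisting of $B$ together with the proper subsets $B'$ and $C'$ yields an element $h \in [\Gamma,\Gamma]$ with $\supp(h) \subseteq B$ and $h(B') \subseteq C'$. Since $B'$ and $C'$ are disjoint and $B' \neq \emptyset$, the relation $h(B') \subseteq C'$ forces $h$ to move the points of $B'$, so $h \neq 1$.

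To finish, recall that for any $h \in \Gamma$ one has $\supp(g h g^{-1}) = g(\supp(h))$, the same computation already recorded in the proof that the finite-support subgroup $\Lambda$ is normal. Consequently $\supp(g h g^{-1}) \subseteq g(B)$, while $\supp(h) \subseteq B$, and these two sets are disjoint by the choice of $B$. If $g$ commutes with $h$, then $g h g^{-1} = h$, whence $\supp(h) = \supp(g h g^{-1}) \subseteq B \cap g(B) = \emptyset$, forcing $h = 1$ and contradicting the previous paragraph. Therefore no nontrivial $g$ can centralize $[\Gamma,\Gamma]$, which proves $\mathcal{C}_\Gamma([\Gamma,\Gamma])$ is trivial.

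The main obstacle is precisely the middle step, namely localizing a \emph{nontrivial} element of the commutator subgroup inside the prescribed ball $B$; here the no-finite-balls hypothesis is essential, as it is exactly what makes the vigorousness of Corollary \ref{cor:simplecomm} available. As an alternative not relying on vigorousness, one could construct $h$ by hand as the commutator of two involutions supported in $B$: iterating the first CSS$^*$ condition produces three pairwise disjoint infinite subballs of $B$, and Corollary \ref{cor:extension} builds two involutions swapping overlapping pairs of them in the pattern of two transpositions sharing a point, whose commutator is a nontrivial element of $[\Gamma,\Gamma]$ supported in $B$. The vigorousness route is cleaner, however, and reuses machinery already in place.
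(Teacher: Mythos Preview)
Your proof is correct and shares the same underlying mechanism as the paper's: if $g$ centralizes $[\Gamma,\Gamma]$ then $g$ must stabilize the support of every element of $[\Gamma,\Gamma]$, and one manufactures a commutator-subgroup element whose support this would force into an empty set (or, in the paper's version, force a specific point into a forbidden region).

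The execution differs. You first isolate a ball $B$ with $B\cap g(B)=\emptyset$ and then invoke the vigorousness of $[\Gamma,\Gamma]$ from Corollary~\ref{cor:simplecomm} as a black box to produce a nontrivial $h\in[\Gamma,\Gamma]$ with $\supp(h)\subseteq B$; the contradiction is then immediate from $\supp(h)=g(\supp(h))\subseteq B\cap g(B)=\emptyset$. The paper instead builds the needed element explicitly: it picks disjoint balls $B\ni x_0$, $C\ni g(x_0)$, $D_1$, $D_2$, takes involutions $h_i$ swapping $B$ with $D_i$ (via Corollary~\ref{cor:extension}), and sets $f=[h_1,h_2]$; then $\supp(f)\subseteq X\setminus C$ while $x_0\in\supp(f)$, so $g(x_0)\in\supp(f)\subseteq X\setminus C$ contradicts $g(x_0)\in C$. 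Your ``alternative'' construction with three subballs of $B$ is essentially this argument, relocated entirely inside $B$. The tradeoff is that your main route is shorter but leans on the (logically prior, so non-circular) machinery of Corollary~\ref{cor:simplecomm}, whereas the paper's argument is self-contained from the basic CSS$^*$ toolkit.
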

\begin{proof}
Since $g$ commutes with everything in $[\Gamma , \Gamma ]$, it must stabilize the support of any element in $[\Gamma , \Gamma ]$. By way of contradiction, assume that $g$ is not the identity element. Then there exists $x_0 \in X$ such that $g(x_0) \neq x_0$. Pick closed disjoint closed balls $B$ and $C$ such that $x_0 \in B$ and $g(x_0) \in C$. Pick closed disjoint closed balls $D_1$ and $D_2$ that are also disjoint from $B$ and $C$. Pick a similarity in $\Sim_{X}(B,D_i)$ and extend it to an element $h_i \in \Gamma$ such that it maps $B$ to $D_i$, $D_i$ to $B$, and is the identity outside of these closed balls. In particular, because these balls are disjoint from $C$, it follows that $h_i$ is the identity on $C$. Hence, clearly the commutator $f := [h_1,h_2]$ is the identity on $C$, meaning the support of $f$ is contained in $X \setminus C$. Moreover, because $f(x_0) \in D_1$, and $D_1$ is disjoint from $B$, $f(x_0)$ must be distinct from $x_0$, meaning that the point $x_0$ lies in the support $f$. Hence, because $x_0 \in \supp (f) \subseteq X \setminus C$, it follows that 
$$g(x_0) \in g(\supp (f)) \subseteq \supp (f) \subseteq X \setminus C.$$
But this is a contradiction. Hence, $g$ must be the identity element. 
\end{proof}

Using the above lemma, along with \cite[Theorem 1.3]{Ursu2022}, and the fact that CSS$^*$ groups are $C^*$-simple when $X$ does not contain finite balls, we can immediately conclude that the commutator subgroup of a CSS$^*$ group is $C^*$-simple.

\begin{corollary}[Theorem \ref{mainthm:C*simple}]\label{cor:commutatorisC*simple}
Let $\Gamma \le \text{LS}(X)$ be a CSS$^*$ and assume $X$ does not contain finite balls. Then $[\Gamma , \Gamma ]$ is $C^*$-simple.
\end{corollary}

Now that we know $[\Gamma , \Gamma ]$ is $C^*$-simple and an infinite simple group, we can upgrade our ICC result for CSS$^*$ groups, whose compact ultrametric space does not contain finite balls, to an irreducible inclusion of II$_1$ factors. We say an inclusion $N \subseteq M$ of factors $N$ and $M$ is \emph{irreducible} if $N ' \cap M = \mathbb C$. In particular, if $N \subseteq M$ is an irreducible inclusion of factors, then every intermediate von Neumann algebra is also a factor, since $N \subseteq P \subseteq M$ implies $P' \cap M \subseteq N' \cap M$. 

\begin{theorem}
Let $\Gamma \le \text{LS}(X)$ be a CSS$^*$ group and assume $X$ does not contain finite balls. Then the inclusion $L([\Gamma , \Gamma ]) \subseteq L(\Gamma)$ of II$_1$ factors is irreducible. 
\end{theorem}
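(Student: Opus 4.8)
The plan is to reduce the statement to a purely group-theoretic computation of the relative commutant and then exploit the fact that $[\Gamma,\Gamma]$ is simple together with Lemma \ref{lemma:centralizerofcommutator}. First, both algebras are II$_1$ factors: $\Gamma$ is ICC by Theorem \ref{thm:icc}, and $[\Gamma,\Gamma]$ is an infinite simple group, hence ICC (a nontrivial finite conjugacy class would make the FC-center a proper nontrivial normal subgroup, forcing $[\Gamma,\Gamma]$ to be a simple FC-group and therefore finite). Irreducibility means $L([\Gamma,\Gamma])'\cap L(\Gamma)=\mathbb{C}$, and by the description of the relative commutant recalled before Lemma \ref{lemma:wkbicentral}, every element of $L([\Gamma,\Gamma])'\cap L(\Gamma)$ is supported on the virtual centralizer $\mathcal{VC}_\Gamma([\Gamma,\Gamma])$. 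Thus it suffices to prove $\mathcal{VC}_\Gamma([\Gamma,\Gamma])=\{e\}$, since then the only admissible Fourier coefficient is that of the identity and the relative commutant collapses to $\mathbb{C}$.

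Second, write $K:=\mathcal{VC}_\Gamma([\Gamma,\Gamma])$. Because $[\Gamma,\Gamma]$ is normal in $\Gamma$, the virtual centralizer $K$ is itself normal in $\Gamma$: for $k\in K$ and $t\in\Gamma$ the $[\Gamma,\Gamma]$-conjugacy class of $t^{-1}kt$ is the $t$-translate of the $[\Gamma,\Gamma]$-conjugacy class of $k$ (using normality of $[\Gamma,\Gamma]$ to reindex the conjugating elements), hence again finite. Next, $K\cap[\Gamma,\Gamma]=\{e\}$: any $g\in K\cap[\Gamma,\Gamma]$ has finite conjugacy class under $[\Gamma,\Gamma]$, so it lies in the FC-center of $[\Gamma,\Gamma]$, which is trivial since $[\Gamma,\Gamma]$ is ICC. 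Now $K$ and $[\Gamma,\Gamma]$ are two normal subgroups of $\Gamma$ with trivial intersection, so they commute elementwise: for $k\in K$ and $h\in[\Gamma,\Gamma]$ the commutator $[k,h]=k^{-1}(h^{-1}kh)=(k^{-1}h^{-1}k)h$ lies in $K\cap[\Gamma,\Gamma]=\{e\}$. Therefore $K\subseteq \mathcal{C}_\Gamma([\Gamma,\Gamma])$, and Lemma \ref{lemma:centralizerofcommutator} gives $\mathcal{C}_\Gamma([\Gamma,\Gamma])=\{e\}$, so $K=\{e\}$ and irreducibility follows.

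The bookkeeping is routine; the two points requiring care are the reduction in the first paragraph and the commuting step. For the reduction I would spell out that the relative commutant formula forces any nonscalar element to produce a nonidentity group element with finite $[\Gamma,\Gamma]$-conjugacy class, which is exactly what $\mathcal{VC}_\Gamma([\Gamma,\Gamma])\neq\{e\}$ would supply. The genuine crux is recognizing that one need not re-run the orbit-counting argument of Theorem \ref{thm:icc} inside $[\Gamma,\Gamma]$; instead, normality of both subgroups upgrades the triviality of the ordinary centralizer (Lemma \ref{lemma:centralizerofcommutator}) to triviality of the virtual centralizer, which is the only nontrivial input beyond the standard relative commutant description.
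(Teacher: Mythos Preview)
Your proof is correct and shares the same overall architecture as the paper's: reduce irreducibility to $\mathcal{VC}_\Gamma([\Gamma,\Gamma])=\{e\}$, and then invoke Lemma~\ref{lemma:centralizerofcommutator}. The only difference is the intermediate step. The paper argues pointwise: if $g\in\Gamma$ has finite $[\Gamma,\Gamma]$-conjugacy class, then $C_{[\Gamma,\Gamma]}(g)$ has finite index in $[\Gamma,\Gamma]$; but an infinite simple group has no proper finite-index subgroups, so $C_{[\Gamma,\Gamma]}(g)=[\Gamma,\Gamma]$ and $g\in \mathcal{C}_\Gamma([\Gamma,\Gamma])$. You instead treat $K=\mathcal{VC}_\Gamma([\Gamma,\Gamma])$ globally, show it is normal with $K\cap[\Gamma,\Gamma]=\{e\}$ (via the ICC property of the infinite simple group $[\Gamma,\Gamma]$), and deduce $[K,[\Gamma,\Gamma]]=\{e\}$ from the standard fact that two normal subgroups with trivial intersection commute. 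Both routes are short and rest on simplicity of $[\Gamma,\Gamma]$; the paper's is marginally more direct, while yours has the minor advantage of packaging the argument so that the passage from virtual centralizer to centralizer is a single normality/commutation step rather than an index computation.
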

\begin{proof}
The inclusion being irreducible is equivalent to every nontrivial element of $\Gamma$ having infinitely many $[\Gamma , \Gamma ]$-conjugates. This is in turn equivalent to every nontrivial element of $\Gamma$ having infinite index centralizer in $[\Gamma , \Gamma ]$. Hence, let $g \in \Gamma$ have finite-index centralizer in $[\Gamma , \Gamma ]$. Because $[\Gamma , \Gamma ]$ is an infinite simple group, it cannot have any proper finite-index subgroups, meaning that the centralizer of $g$ inside $[\Gamma , \Gamma ]$ must equal all of $[\Gamma , \Gamma ]$. Hence, $g$ must commute with every commutator, and therefore $g$ must be the identity element by Lemma \ref{lemma:centralizerofcommutator}.
\end{proof}

It turns out that the $C^*$-analogue of this property is also true. We say an inclusion of simple $C^*$-algebras $A \subseteq B$ is \emph{$C^*$-irreducible} if every intermediate $C^*$-algebra is also simple. If we view $L([\Gamma, \Gamma])$ and $L(\Gamma)$ as $C^*$-algebras, then this inclusion is also $C^*$-irreducible when the index of the commutator subgroup in $\Gamma$ is finite, by \cite[Theorem 4.4]{rordam2023irreducible}, which is in turn a synthesis of Popa's work. Also, since $[\Gamma, \Gamma]$ is $C^*$-simple, it follows from Lemma \ref{lemma:centralizerofcommutator}, together with \cite[Theorem 6.4]{bedos2023c}, that the inclusion of  $C^*_{\text{red}}([\Gamma, \Gamma])$ in  $C^*_{\text{red}}(\Gamma)$ is $C^*$-irreducible, even without assuming the index is finite. We summarize this in the following corollary. 

\begin{corollary}
    Let $\Gamma \le \text{LS}(X)$ be a CSS$^*$ group and assume $X$ does not contain finite balls. Then the inclusion of the reduced $C^*$-algebras, $C^*_{\text{red}}([\Gamma, \Gamma]) \subseteq C^*_{\text{red}}(\Gamma)$ is $C^*$-irreducible. If we additionally assume $[[\Gamma, \Gamma] : \Gamma] < \infty$, then the inclusion $L([\Gamma, \Gamma]) \subseteq L(\Gamma)$ is $C^*$-irreducible.
\end{corollary}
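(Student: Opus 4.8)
The plan is to obtain both inclusions from external characterizations of $C^*$-irreducibility, after verifying that the two hypotheses they require are precisely the facts already established. For the reduced $C^*$-algebra inclusion I would work with the normal subgroup $[\Gamma,\Gamma] \trianglelefteq \Gamma$ and check the hypotheses of \cite[Theorem 6.4]{bedos2023c}: that $[\Gamma,\Gamma]$ is $C^*$-simple and that its centralizer in $\Gamma$ is trivial. The first is Corollary \ref{cor:commutatorisC*simple}, and the second is exactly Lemma \ref{lemma:centralizerofcommutator}, which yields $C_\Gamma([\Gamma,\Gamma]) = \{e\}$. Since $[\Gamma,\Gamma]$ is moreover normal, these inputs place us squarely in the situation governed by the B\'edos--Omland criterion, and I would conclude that $C^*_{\text{red}}([\Gamma,\Gamma]) \subseteq C^*_{\text{red}}(\Gamma)$ is $C^*$-irreducible with no restriction on the index. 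The only delicate point here is matching the precise statement of \cite[Theorem 6.4]{bedos2023c} to our normal, $C^*$-simple, trivially-centralized subgroup; this is where I expect the proof to require the most care, since the conclusion genuinely uses that the relative centralizer is trivial rather than merely that the inclusion is irreducible at the von Neumann algebra level.

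For the second assertion I would add the finite-index hypothesis $[\Gamma : [\Gamma,\Gamma]] < \infty$ and reduce to a finite-index irreducible inclusion of II$_1$ factors. First, $[\Gamma,\Gamma]$ is an infinite simple group, hence ICC (a finite conjugacy class would produce a finite-index, thus trivial-or-full, centralizer, forcing a nontrivial center and contradicting simplicity), so $L([\Gamma,\Gamma])$ is a II$_1$ factor and the inclusion $L([\Gamma,\Gamma]) \subseteq L(\Gamma)$ makes sense. The theorem immediately preceding this corollary shows this inclusion is irreducible, i.e. $L([\Gamma,\Gamma])' \cap L(\Gamma) = \mathbb{C}$. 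Next I would record that a finite-index subgroup gives a finite-Jones-index inclusion of the associated group von Neumann algebras, with $[L(\Gamma) : L([\Gamma,\Gamma])] = [\Gamma : [\Gamma,\Gamma]]$, so that we have an irreducible inclusion of II$_1$ factors of finite index.

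With an irreducible finite-index inclusion of II$_1$ factors in hand, \cite[Theorem 4.4]{rordam2023irreducible} --- a synthesis of Popa's results on finite-index inclusions --- gives that, viewed as an inclusion of $C^*$-algebras, $L([\Gamma,\Gamma]) \subseteq L(\Gamma)$ is $C^*$-irreducible, completing the proof. The main obstacle throughout is not any new computation but rather confirming that the hypotheses of the two black-box theorems are met in exactly the form stated; everything else follows by assembling Corollary \ref{cor:commutatorisC*simple}, Lemma \ref{lemma:centralizerofcommutator}, and the irreducibility theorem above.
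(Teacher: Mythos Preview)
Your proposal is correct and matches the paper's own argument essentially line for line: the paper invokes Corollary \ref{cor:commutatorisC*simple} and Lemma \ref{lemma:centralizerofcommutator} together with \cite[Theorem 6.4]{bedos2023c} for the reduced $C^*$-algebra inclusion, and then \cite[Theorem 4.4]{rordam2023irreducible} under the finite-index hypothesis for the von Neumann algebra inclusion. The only additions in your write-up are the explicit remarks that $[\Gamma,\Gamma]$ is ICC and that finite group index gives finite Jones index, which the paper leaves implicit.
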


This section has utilized dynamical results about groups acting by homeomorphism on a Cantor space, which then apply for CSS$^*$ groups $\Gamma \leq \text{LS}(X)$, when $X$ is homeomorphic to a Cantor space. In general, we believe many more such results should hold for CSS$^*$ groups when $X$ does not contain finite balls, and so we end this section with another dynamical corollary as further motivation for this belief. 

A group $G$ acting by homeomorphism on a compact Hausdorff space $X$ is \emph{flexible} if for any pair $B$ and $C$ of proper closed sets with non-empty interior, there is some $g \in G$ such that $g(B) \subseteq C$. 

\begin{corollary}
Let $\Gamma \leq \text{LS}(X)$ be a CSS$^*$ group and assume $X$ does not contain finite balls. Then $\text{Aut}(\text{Aut}(\Gamma)) \cong \text{Aut}(\Gamma)$.
\end{corollary}
\begin{proof}
Since vigorous implies flexible and CSS$^*$ groups acting on a compact ultrametric space without finite balls are vigorous, by Theorem \ref{thm:vigorous}, and full by Theorem \ref{thm:full}, the conclusion follows from \cite[Theorem 1.1]{Olukoya2021}.

\end{proof}

\section{Conclusion}
We concluded by collecting several natural questions that arose throughout.

\begin{question}
    Are all CSS$^*$ groups properly proximal?
\end{question}

As remarked earlier, the proof of Corollary \ref{MainThmPropP}, relies on the cocycle of an FSS$^*$ group being proper. As this is not the case for a general CSS$^*$ group (and particularly, $V_d(G)$ when $G$ is countable and infinite), an alternate approach is needed to answer this question.

We proved FSS$^*$ groups and some CSS$^*$ groups, like the countable R\"over-Nekrashevych groups, are non-inner amenable, but we leave it as an open question whether this is true for all CSS$^*$ groups. We believe it is true and it should follow from a closer analysis of the centralizers of the elements of $\Gamma$. 

\begin{question}\label{q:noninner}
    Are all CSS$^*$ groups non-inner amenable? 
\end{question}

The class of CSS$^*$ groups was intentionally developed from CSS group to be non-inner amenable and give rise to prime von Neumann algebras. If the answer to Question \ref{q:noninner} is yes, then being a CSS$^*$ group is sufficient for a CSS group to satisfy Theorem \ref{mainthm:PrimeCSS} and Theorem \ref{MainThm:NonInner}, but we do not know if it is necessary.

\begin{question}
    Does a CSS group satisfy the conclusions of Theorem \ref{mainthm:PrimeCSS} and Theorem \ref{MainThm:NonInner} if and only if it is a CSS$^*$ group?
\end{question}

The Thompson-like groups from cloning systems of \cite{witzel2018thompson} and \cite{skipper2021almost} give rise to natural ``$F$-like'' groups that generalize Thompson's group $F$. In the context of Example \ref{ex:Vdr}, one can define an $F$-like group, by restricting to the subgroup of order preserving similarities. A similar restriction should define an $F$-like group in the CSS$^*$ group arising from irreducible subshifts of finite type. So, one can generally ask, what properties will these $F$-like groups share with Thompson's group $F$? More specifically, we ask the following question.

\begin{question}
Is the $F$-like subgroup of a CSS$^*$ group inner amenable?
\end{question}

\bibliographystyle{amsalpha}
\bibliography{stable}

\end{document}